\documentclass[12pt]{amsart}

\textwidth450pt
\hoffset-40pt
\voffset-20pt
\headsep+30pt
\textheight610pt

\usepackage{amssymb, amscd}
\usepackage{latexsym,epsfig}
\usepackage[all]{xy}
\usepackage[pdf]{pstricks}
\usepackage{hyperref}


\numberwithin{equation}{section}
\def\today{\ifcase\month\or Jan\or Febr\or  Mar\or  Apr\or May\or Jun\or  Jul\or Aug\or  Sep\or  Oct\or Nov\or  Dec\or\fi \space\number\day, \number\year}


\def\C{{\mathcal C}}
\def\H{{\mathcal H}}
\def\M{{\mathcal M}}

\newcommand{\CC}{\mathbb C}
\newcommand{\EE}{\mathbb E}

\newcommand{\PP}{\mathbb P}
\newcommand{\QQ}{\mathbb Q}

\newcommand{\ZZ}{\mathbb Z}

\newcommand\langepijl[1]{\buildrel {#1} \over \longrightarrow}

\def\tH{\widetilde{\mathcal H}}
\def\tC{\widetilde{\mathcal C}}
\def\tP{\widetilde{\PP}}
\def\bM{\overline{\mathcal M}}
\def\bM{\overline{\M}}

\numberwithin{equation}{section}
\newtheorem{theorem}{Theorem}[section]
\newtheorem{lemma}[theorem]{Lemma}
\newtheorem{proposition}[theorem]{Proposition}
\newtheorem{corollary}[theorem]{Corollary}

\newtheorem{definition-lemma}[theorem]{Definition-Lemma}
\theoremstyle{definition}

\theoremstyle{remark}
\newtheorem{remark}[theorem]{Remark}

\begin{document}

\title[Effective divisors and modular forms]{Effective divisors on projectivized Hodge \\
bundles and Modular Forms}
\begin{abstract}
We construct vector-valued modular forms on moduli spaces of curves and abelian varieties
using effective divisors in projectivized Hodge bundles over moduli of curves.
Cycle relations tell us the weight of these modular forms.
In particular we construct basic modular forms for genus $2$ and $3$. 
We also discuss modular forms on the moduli of hyperelliptic curves.
In that case the relative canonical bundle is a pull back of a line bundle on
a ${\PP}^1$-bundle over the moduli of hyperelliptic curves
and we extend that line bundle to a compactification
so that its push down is (close to) the Hodge bundle and
use this to construct modular forms.
In an appendix we use our method to calculate divisor classes
in the dual projectivized $k$-Hodge bundle determined by Gheorghita-Tarasca
and by Korotkin-Sauvaget-Zograf.

\end{abstract}

\author{Gerard van der Geer}
\author{Alexis Kouvidakis}
\maketitle

\begin{section}{Introduction}\label{sec-intro}
Moduli spaces of curves and of abelian varieties come with a natural
vector bundle, the Hodge bundle ${\EE}$. Starting from this vector bundle
one can
construct other natural vector bundles by applying Schur functors,
like ${\rm Sym}^n({\EE})$ or $\det({\EE})^{\otimes m}$. Sections of
such bundles are called modular forms. For example, for the moduli space
$\mathcal{A}_g$ of principally polarized abelian varieties of dimension $g$
these are Siegel modular forms,  and for the moduli space $\mathcal{M}_g$
of curves of genus $g$ these are Teichm\"uller modular forms. If the Schur functor
corresponds to an irreducible representation $\rho$ we say that a section
of ${\EE}_{\rho}$ is a modular form of weight $\rho$. The Hodge bundle extends
to appropriate compactifications of such moduli spaces and in many cases
the sections also extend automatically to the compactifications,
e.g. for $\mathcal{A}_g$ with  $g\geq 2$ by the so-called Koecher principle.

In this paper we try to construct modular forms in a geometric way. 
It is well-known that an effective divisor on $\mathcal{A}_g$ 
or on $\overline{\mathcal{M}}_g$ with $g\geq 2$
representing the cycle class $m\lambda$ with $\lambda=c_1(\det({\EE}))$ and $m\in {\ZZ}_{>0}$
yields a scalar-valued modular form of weight $m$, that is, a section of $\det({\EE})^{\otimes m}$. We will exploit explicit
effective divisors on projectivized vector bundles to construct vector-valued 
modular forms. In particular, we will  construct in this way certain
modular forms that play a pivotal role in low genera.

For example, in the case of $g=2$ there is the modular form $\chi_{6,8}$,
a section of ${\rm Sym}^6({\EE}) \otimes \det({\EE})^8$, that appeared in \cite{CFG1} as follows.
Recall that the Torelli morphism
$
\mathcal{M}_2 \hookrightarrow \mathcal{A}_2
$
has dense image and we have an equality of standard compactifications
$\overline{\mathcal{M}}_2= \tilde{\mathcal{A}}_2$. The
moduli space $\mathcal{M}_2$ has another description as a stack quotient.
This derives from the fact that a smooth complete curve of genus $2$
over a field $k$ of characteristic not $2$ is a double cover of ${\PP}^1$ ramified
at six points, so can be given as $y^2=f$ with $f$ a polynomial of
degree $6$ with non-vanishing discriminant. Writing $f$ as a homogeneous
polynomial in two variables, say $f\in {\rm Sym}^6(W)$ with $W$
the $k$-vector space generated by $x_1,x_2$, and observing that we
may change the basis of $W$, we find a presentation of $\mathcal{M}_2$ as a
stack quotient
$$
\mathcal{M}_2 \sim [W_{6,-2}^0/{\rm GL}(W)],
$$
where we write $W_{a,b}$ for the ${\rm GL}(W)$-representation
 ${\rm Sym}^a(W) \otimes \det(W)^b$. Here the space $W_{6,-2}$
can be seen as the vector space of binary sextics $f$ with an action of
${\rm GL}(W)$ by
$$
f(x_1,x_2) \mapsto (ad-bc)^{-2} f(ax_1+bx_2,cx_1+dx_2) 
$$
for a matrix 
$\left( \begin{smallmatrix} a & b \\ c & d \\ \end{smallmatrix}\right)
\in {\rm GL}(2)$.
The subspace $W^0_{6,-2}$ of $W_{6,-2}$ is the space of $f$ with
non-vanishing discriminant.
The twisting by $\det(W)^{-2}$ is required to get the right stabilizer
for the generic $f$, namely $\pm {\rm Id}_W$.

This interpretation of $\mathcal{M}_2$ was used in \cite{CFG1} to
construct vector-valued Siegel modular forms of degree $2$ by using
invariant theory of binary sextics, thus extending and simplifying the description of
scalar-valued Siegel modular forms by invariants by Igusa \cite{Igusa1960,
Igusa1962}. Covariants define vector-valued modular forms and all Siegel
modular forms of degree $2$ on $\mathcal{A}_2$ can be constructed this way.
In \cite{CFG1} it was shown that the most basic covariant, the universal
binary sextic, defines a meromorphic Siegel modular form $\chi_{6,-2}$
of weight $(6,-2)$, that is,
it defines a meromorphic section of ${\rm Sym}^6({\EE}) \otimes
\det({\EE})^{-2}$ on $\mathcal{A}_2$. After multiplying $\chi_{6,-2}$
by Igusa's cusp form $\chi_{10}$ one obtains the holomorphic modular form
$\chi_{6,8}$, the `first' vector-valued Siegel modular cusp form of
degree $2$.

In the case of $g=3$, there is an analogous form $\chi_{4,0,8}$, a section of 
${\rm Sym}^4({\EE}) \otimes \det({\EE})^8$.
Here it derives from the description of the moduli space $\mathcal{M}_3^{nh}$
of non-hyperelliptic curves of genus three as a stack quotient 
$$
\mathcal{M}_3^{nh} \sim [W_{4,0,-1}^0/{\rm GL}(W)]\, ,
$$
where $W$ is now of dimension $3$ and $W_{4,0,-1}^0 \subset
{\rm Sym}^4(W) \otimes \det(W)^{-1}$ represents ternary quartics defining
smooth curves. In \cite{CFG2} this description
led to the construction of a meromorphic Teichm\"uller
modular form $\chi_{4,0,-1}$ of weight $(4,0,-1)$ and a (holomorphic)
Siegel modular form $\chi_{4,0,8}$ of degree $3$ and weight $(4,0,8)$.
Also in this case all Teichm\"uller and Siegel modular forms of genus $3$
on $\overline{\mathcal{M}}_3$ and $\mathcal{A}_3$ 
can be constructed from these forms by invariant theory.

This paper arises from the desire to construct these basic forms and similar
forms in a geometric way. 
We use cycle relations for effective divisors 
(or almost effective divisors) 
on the projectivized Hodge bundle to construct our forms. It is based on the
observation that an effective divisor $D$ on the projectivized Hodge bundle
${\PP}({\EE})$ with cycle class
$$
[D]=  [\mathcal{O}(j)] + k\, \lambda - \Delta 
$$
with positive integers $j, k$ and $\Delta$ an effective boundary class gives rise
to a section of ${\rm Sym}^j({\EE}) \otimes \det({\EE})^k$ vanishing on boundary divisors,
that is, a modular form.  This method produces
the basic modular forms $\chi_{6,8}$ and $\chi_{4,0,8}$ of degree $2$ and $3$ 
in an efficient way. 

This connection between divisors and modular forms can also be used in the other direction,
obtaining cycle classes for divisors on projectivized Hodge bundles. We give some
examples of this.

Another objective of this paper is to construct modular forms on
moduli spaces of hyperelliptic curves of genus $g$. 
For this we work with two descriptions of the moduli, a description as a stack quotient
and a description as a Hurwitz space. The latter space
$\mathcal{H}_{g,2}$ has as compactification  the space $\overline{\mathcal{H}}_{g,2}$
of admissible degree $2$ covers of genus $g$. 
In the stack description modular forms pull back to covariants
for the action of ${\rm GL}(2)$ on the space of binary forms of degree $2g+2$.

In the Hurwitz space description the relative canonical bundle 
of the universal curve over ${\mathcal H}_{g,2}$ 
can be viewed as the pull back of  $\mathcal{O}(g-1)$ from the trivial ${\mathbb P}^1$-bundle $P$  
over ${\mathcal H}_{g,2}$ equipped with $2g+2$ non-intersecting sections. 
Using the theory of admissible covers,  $P$ is compactified to a space $\overline{P}$, 
a fibration of rational stable curves with $2g+2$ marked points over  $\overline{\mathcal H}_{g,2}$,  
and we show that the line bundle $\mathcal{O}(g-1)$ on $P$ extends to a line bundle on $\overline{P}$ with the property
that its push down to
$\overline{\mathcal{H}}_{g,2}$ is close to the Hodge bundle. This allows us to construct
modular forms on $\overline{\mathcal{H}}_{g,2}$.

When we consider projectivized bundles projectivization is meant in the Grothendieck sense, so
that for a vector space $V$ the projective space ${\PP}(V)$
parametrizes hyperplanes in $V$. 

In an appendix we apply a method used in this paper to calculate the classes of certain divisors 
in the dual projectivized $k$-Hodge bundle that were determined by Gheorghita-Tarasca 
and by Korotkin-Sauvaget-Zograf.
\section*{Acknowledgements} We thank Fabien Cl\'ery, Carel Faber and Gavril Farkas and the referee for useful remarks.
\end{section}
\begin{section}{The Case of Genus Two}
Let $k$ be a field of characteristic not $2$. We consider the moduli space
$\mathcal{M}_2$ of curves of genus $2$ over $k$. This is a Deligne-Mumford stack
and it carries a universal curve $\pi: \mathcal{C}\to \mathcal{M}_2$ 
of genus $2$. 
The relative dualizing sheaf $\omega_{\pi}$ is base point free and 
thus defines a morphism $\varphi: \mathcal{C}\to {\PP}({\EE})$. 
For a curve $C$ the map $\varphi: C
\to {\PP}(\EE_{C})$ associates to a point the space of differentials vanishing
in that point.
We have a commutative diagram with $u$ the natural morphism
$$
\begin{xy}
\xymatrix{
\mathcal{C} \ar[rd]_{\pi} \ar[r]^{\varphi} & {\PP}({\EE})\ar[d]^u \\
& \mathcal{M}_2 \\
}
\end{xy}
$$
We let $\overline{\mathcal{M}}_2$ be the Deligne-Mumford compactification
and $\overline{\pi}: \overline{\mathcal{C}}\to \overline{\mathcal{M}}_2$ 
the corresponding universal curve. 
However, the extension
$\omega_{\overline{\pi}}$ of $\omega_{\pi}$ does not define an extension
of the morphism $\varphi$ to ${\PP}({\EE})$ 
over the boundary component $\Delta_1$ that parametrizes reducible curves. 

We consider 
the branch divisor $D \subset {\PP}({\EE})$ of the morphism $\varphi$. The divisor
$D$ is of relative degree $6$ in the ${\PP}^1$-bundle ${\PP}({\EE})$ 
over the base $\mathcal{M}_2$. We define $\overline{D}$
to be the closure of $D$ in ${\PP}({\EE})$ over $\overline{\mathcal{M}}_2$. 
In the rational Picard group of ${\PP}({\EE})$ we can write
$$
[\overline{D}]=[\mathcal{O}(6)]+u^*(A) 
$$
with $A$ a class in the rational Picard group of $\overline{\mathcal{M}}_2$
and $u: {\PP}({\EE}) \to \overline{\mathcal{M}}_2$ the natural projection.

We want to determine $A$ in terms of the generators $\lambda$, $\delta_0$ of the Picard group
of $\overline{\mathcal{M}}_2$.
We write $\lambda$ for the first Chern class of ${\EE}$ and $\delta_1$ (resp.\ 
$\delta_0$) for the
class of $\Delta_1$ (resp.\  $\Delta_0$) in the Picard group of the stack 
$\overline{\mathcal{M}}_2$; here $\Delta_0$ is the boundary component that
parametrizes irreducible curves with a double point.

In order to do this we extend the morphism $\varphi$.
It extends over a Zariski open part of $\Delta_0$ since $\omega_{\pi}$
has no base points there. However, over $\Delta_i$ with $i>0$ this system
has base points. We then use a base change as described in 
the appendix in Section \ref{app}. After a base change we have in an open 
neighborhood $U_i$ of the generic point of $\Delta_i$ 
a semi-stable family. If we take the base
to be $1$-dimensional  we get a semi-stable family
$\tilde{f}: \tilde{\mathcal{C}} \to \tilde{B}$
with as central fibre a chain $C'+R+C^{\prime\prime}$ with $R$ a $(-2)$-curve
and $C'$ and $C^{\prime\prime}$ of genus~$1$. The extension $\varphi'$
of $\varphi$ is given by $\omega_{\tilde{f}}(-R)$ with 
$\tilde{f}_*(\omega_{\tilde{f}}(-R))={\EE}_{\tilde{B}}$
and the morphism $\varphi': \tilde{C} \to {\PP}({\EE})$ contracts
$C'$ and $C^{\prime\prime}$ and is of degree $2$ on $R$. We refer
to the appendix, Section \ref{app} for the details.    
The morphism $\varphi'$ has the property that
$$
{\varphi'}^*(\mathcal{O}_{{\PP}({\EE}_{\tilde{B}})}(1))= \omega_{\tilde{f}}(-R)\, .
$$ 
\begin{proposition}\label{classofA} 
We have $[\overline{D}]=6\, [\mathcal{O}(1)]+u^*(8\, \lambda-\delta_0-\delta_1)$\, .
\end{proposition}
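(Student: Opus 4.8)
The plan is to express $\overline{D}$ as twice the branch class of a double cover, to compute that class over $\overline{\mathcal{M}}_2\setminus\Delta_1$ by a Riemann--Hurwitz argument, and then to determine the remaining contribution along $\Delta_1$ using the semistable model described above. Since $u^{*}$ is injective on rational Picard groups, the class $A$ in $[\overline{D}]=6\,[\mathcal{O}(1)]+u^{*}(A)$ is well defined; and because $\Delta_1$ is an irreducible divisor, the kernel of restriction $\mathrm{Pic}(\overline{\mathcal{M}}_2)_{\QQ}\to\mathrm{Pic}(\overline{\mathcal{M}}_2\setminus\Delta_1)_{\QQ}$ is spanned by $\delta_1$, so it suffices to compute $A$ modulo $\delta_1$ (by restriction) and then its coefficient along $\delta_1$.

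First I determine $A$ away from $\Delta_1$. On the dense open set $V\subseteq\overline{\mathcal{M}}_2$ where $\varphi$ is defined (it contains the generic points of $\mathcal{M}_2$ and of $\Delta_0$, and $\mathrm{Pic}(V)_{\QQ}=\mathrm{Pic}(\overline{\mathcal{M}}_2\setminus\Delta_1)_{\QQ}$) the morphism $\varphi\colon\overline{\mathcal{C}}\to{\PP}({\EE})$ is finite and flat of degree $2$, presenting $\overline{\mathcal{C}}$ as the double cover of ${\PP}({\EE})$ branched along $\overline{D}$. Writing $\varphi_{*}\mathcal{O}_{\overline{\mathcal{C}}}=\mathcal{O}\oplus\mathcal{L}^{-1}$ one has $[\overline{D}]=2\,[\mathcal{L}]$ and $\omega_{\overline{\mathcal{C}}/{\PP}({\EE})}=\varphi^{*}\mathcal{L}$. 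On the other hand $\omega_{\overline{\mathcal{C}}/{\PP}({\EE})}=\omega_{\overline{\pi}}\otimes\varphi^{*}\omega_{{\PP}({\EE})/\overline{\mathcal{M}}_2}^{-1}$, the relative Euler sequence gives $\omega_{{\PP}({\EE})/\overline{\mathcal{M}}_2}=\mathcal{O}(-2)\otimes u^{*}(\det{\EE})$, and $\varphi^{*}\mathcal{O}(1)=\omega_{\overline{\pi}}$ by construction of $\varphi$. Combining these, and using that $\varphi^{*}$ is injective on rational Picard groups (as $\varphi$ is finite, flat and surjective), we get $\mathcal{L}=\mathcal{O}(3)\otimes u^{*}(\det{\EE})^{-1}$, hence $[\overline{D}]=6\,[\mathcal{O}(1)]-2\,u^{*}\lambda$ over $\overline{\mathcal{M}}_2\setminus\Delta_1$. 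Therefore $A+2\lambda$ is a rational multiple of $\delta_1$; write $A=-2\lambda+m\,\delta_1$.

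To find $m$ I work over a general transverse disc $B$ at a generic point of $\Delta_1$ and pass to the semistable model $\tilde f\colon\tilde{\mathcal{C}}\to\tilde B$ of Section~\ref{app}, with $\tilde B\to B$ of degree $2$ (the degree forced by $R$ being a single $(-2)$-curve), central fibre $C'+R+C''$ with $C',C''$ elliptic of self-intersection $-1$, and ${\varphi'}^{*}\mathcal{O}(1)=\omega_{\tilde f}(-R)$ with $\varphi'\colon\tilde{\mathcal{C}}\to{\PP}({\EE}_{\tilde B})$ contracting $C'$ and $C''$. Let $\tilde{\mathcal{C}}\xrightarrow{\,c\,}Z\xrightarrow{\,\psi\,}{\PP}({\EE}_{\tilde B})$ be the Stein factorization: $\psi$ is finite flat of degree $2$, branched exactly along the pull-back $\overline{D}_{\tilde B}$ of $\overline{D}$ (no vertical components, since $R\to{\PP}^1$ has degree $2$ so $\psi$ is generically étale on the central fibre), whence $\omega_{Z/\tilde B}=\psi^{*}\bigl(\mathcal{L}_{\tilde B}\otimes\mathcal{O}(-2)\otimes u^{*}(\det{\EE})\bigr)$. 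With $[\mathcal{L}_{\tilde B}]=3\,[\mathcal{O}(1)]+u^{*}(-\lambda+\tfrac{m}{2}\delta_1)_{\tilde B}$ this becomes $\omega_{Z/\tilde B}=\psi^{*}\bigl(\mathcal{O}(1)\otimes u^{*}(\tfrac{m}{2}\delta_1)_{\tilde B}\bigr)$. Pulling back by $c$, using ${\varphi'}^{*}\mathcal{O}(1)=\omega_{\tilde f}(-R)$ and the adjunction $c^{*}\omega_{Z/\tilde B}=\omega_{\tilde f}(C'+C'')$ for the contraction of the elliptic $(-1)$-curves $C',C''$ (discrepancy $-1$), we obtain
$$
\mathcal{O}_{\tilde{\mathcal{C}}}(C'+R+C'')=\tilde f^{*}\bigl(\tfrac{m}{2}\delta_1\bigr)_{\tilde B}.
$$
The left-hand side is the class of the reduced central fibre $\tilde f^{*}[0]$, and $\bigl(\tfrac{m}{2}\delta_1\bigr)_{\tilde B}=m\,[0]$ because $\tilde B\to B$ is totally ramified of degree $2$ over $0$; since $\tilde f^{*}$ is injective this forces $m=1$.

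Hence $A=-2\lambda+\delta_1$, which by the genus-two relation $10\lambda=\delta_0+2\delta_1$ in $\mathrm{Pic}(\overline{\mathcal{M}}_2)_{\QQ}$ equals $8\lambda-\delta_0-\delta_1$, as claimed. The main obstacle is exactly the analysis over $\Delta_1$: there $\varphi$ is defined only after base change and collapses the two elliptic tails, so one must keep careful track of the base-change degree (read off from $R^{2}=-2$), of the discrepancy $-1$ of the contraction $c$, and of the fact that the Stein factorization $\psi$ is branched along $\overline{D}_{\tilde B}$ and nothing else. (Alternatively one can avoid the Stein factorization and run Riemann--Hurwitz directly on the semistable model, but the bookkeeping of the $(-2)$-curve $R$ and the contracted tails is then the same delicate point.)
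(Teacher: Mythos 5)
Your proposal is correct and, at bottom, follows the same route as the paper: everything is decided by a Riemann--Hurwitz/adjunction computation on the semistable model of Section~\ref{app} near $\Delta_1$, using ${\varphi'}^*\mathcal{O}(1)=\omega_{\tilde f}(-R)$, the contraction of the two elliptic tails, the factor $2$ from the ramified base change (the paper's ``$2\,b_1=\delta_1$''), and finally the relation $10\,\lambda=\delta_0+2\,\delta_1$. The differences are in packaging. First, you fix $A$ modulo $\delta_1$ globally by viewing $\varphi$ over $\mathcal{M}_2\cup\Delta_0^0$ as a finite flat double cover and using $\varphi_*\mathcal{O}=\mathcal{O}\oplus\mathcal{L}^{-1}$, $\omega_{\overline{\mathcal{C}}/\PP(\EE)}=\varphi^*\mathcal{L}$, $[\overline D]=2[\mathcal{L}]$; the paper obtains the same interior information from the identical adjunction bookkeeping it performs at $\Delta_1$, just without the extra curves, so your split ``restrict away from $\Delta_1$, then compute the $\delta_1$-coefficient'' is a cleaner organization but not new input. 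Second, at $\Delta_1$ you route the computation through the Stein factorization $Z$ and the discrepancy $-1$ of contracting the elliptic $(-1)$-curves; the paper instead tracks the ramification divisor $S$ via $\omega_{\tilde f}={\varphi'}^*\omega_u+S+2(C'+C'')$ and ${\varphi'}^*\overline D/2=S+3(C'+C'')$, and your identity $c^*\omega_{Z/\tilde B}=\omega_{\tilde f}(C'+C'')$ is exactly their combined equation, so the two are equivalent (your version needs $Z$ Gorenstein, which is automatic for a flat double cover of a smooth surface). One caution about your very last line: over a local one-dimensional base all line bundles are trivial, so ``$\tilde f^*$ is injective'' does not by itself force $m=1$; the step is legitimate only when read as computing the order of vanishing along the central fibre of the comparison section supplied by your first step, i.e.\ as a divisor-level statement. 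Since the paper's own concluding step requires the same reading, this is a point of phrasing rather than a gap.
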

\begin{proof} We write $[\overline{D}]=6\, [\mathcal{O}(1)]+u^*(A)$.
We work with the above two types of $1$-dimensional 
families $f: C \to B$. 
The morphism $\varphi$ is ramified over $D$, and thus $\varphi^{\prime}$ is ramified
over $\overline{D}$ and contracts $C'$ and $C^{\prime\prime}$. We denote the
ramification divisor by $S$. We thus get (writing abusively line bundles and divisors
for the corresponding divisor classes)
$$
\omega_{\tilde{f}}={\varphi'}^*\omega_u + S + 2\, (C' +C^{\prime\prime}), \quad
{\varphi'}^* \overline{D}/2= S + 3\, (C'+C^{\prime\prime})\, ,
$$
where the first equation comes from adjunction $\omega_{\tilde{f}}+C'_{|C'}=\mathcal{O}_{C'}$ for
$C'$ and similarly for $C^{\prime\prime}$,
and the second one from 
$C' \cdot {\varphi'}^* \overline{D}=0=C^{\prime\prime} \cdot {\varphi'}^* \overline{D}$.
This gives
$$
\begin{aligned}
\omega_{\tilde{f}}& = 
{\varphi^{\prime}}^*(\omega_u+ \overline{D}/2) -(C'+C^{\prime\prime})\\
& =  {\varphi^{\prime}}^*(\mathcal{O}(-2)+u^*(\lambda) + 
\mathcal{O}(3) +u^*(A/2))- (C'+C^{\prime\prime}) \\
& = {\varphi^{\prime}}^*(\mathcal{O}(1)+u^*(\lambda+A/2))-(C'+C^{\prime\prime}) \\
& = \omega_{\tilde{f}}-R + \tilde{f}^*(\lambda+A/2) -(C'+C^{\prime\prime})\\
& = \omega_{\tilde{f}}+ \tilde{f}^*(\lambda+A/2-b_1)\, \\
\end{aligned}
$$
with $b_1$ the special point of $\tilde{B}$. This shows that $A=-2\, \lambda+2\, b_1$.
Because of the base change that 
we executed, we have $2\, b_1=\delta_1$ and we obtain
$A=-2\, \lambda+\delta_1$.
Now we use the well-known relation $10\, \lambda= \delta_0+2\, \delta_1$ 
(see \cite{Mumford}) and thus get
$A=-2\lambda+\delta_1=8\lambda -\delta_0-\delta_1$.
\end{proof}
\begin{remark} 
We indicate an alternative proof of this result in Remark \ref{remarkg=2}.
\end{remark}

An important remark is now that
$u_*(\mathcal{O}(1))={\EE}$ and $u_*(\mathcal{O}(m))={\rm Sym}^m({\EE})$ for $m\geq 1$. 
The divisor $\overline{D}$ 
with
$$
[\overline{D}]=[\mathcal{O}(6)]+u^*(8\lambda-\delta_0-\delta_1)
$$
is an effective divisor on ${\PP}({\EE})$.
We apply $u_*$ to the corresponding section~$1$ of $\mathcal{O}(\overline{D})$. 
By Proposition \ref{classofA}  we see that we get a regular section $\chi_{6,8}$ of
the vector bundle ${\rm Sym}^6({\EE})\otimes \det({\EE})^8$ over 
$\overline{\mathcal{M}}_2$. Moreover, this section
vanishes on the divisors $\Delta_0$ and $\Delta_1$. 
Note that the Torelli map extends to an 
isomorphism $\overline{\mathcal{M}}_2\cong \tilde{\mathcal{A}}_2$ with 
$\tilde{\mathcal{A}}_2$ the standard smooth compactification of
$\mathcal{A}_2$. Therefore our section defines a Siegel modular
form $\chi_{6,8}$ of weight $(6,8)$ that is a cusp form. 

\begin{corollary} Let $\overline{D}$ be the closure in ${\PP}({\EE})$ of the branch divisor of the canonical map
for the universal curve over $\mathcal{M}_2$. 
The push forward $u_*(s)$, 
with $s$ the natural section $1$ of $\mathcal{O}(\overline{D})$
on ${\PP}({\EE})$, defines a Siegel modular cusp 
form $\chi_{6,8}$ of degree $2$ and weight $(6,8)$.
\end{corollary}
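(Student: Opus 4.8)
The plan is to read the Corollary off directly from Proposition~\ref{classofA} by pushing the tautological section of $\overline{D}$ down along $u$. Since $\overline{D}$ is an effective Cartier divisor on ${\PP}({\EE})$, the line bundle $\mathcal{O}_{{\PP}({\EE})}(\overline{D})$ carries its canonical global section $s$ (the constant $1$), whose zero scheme is $\overline{D}$. By Proposition~\ref{classofA} this line bundle can be rewritten as
$$
\mathcal{O}_{{\PP}({\EE})}(\overline{D}) \;\cong\; \mathcal{O}(6)\otimes u^*\!\bigl(\det({\EE})^{8}\otimes \mathcal{O}_{\overline{\mathcal{M}}_2}(-\Delta_0-\Delta_1)\bigr)\, .
$$

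Next I would apply $u_*$. Since $u\colon {\PP}({\EE})\to\overline{\mathcal{M}}_2$ is a ${\PP}^1$-bundle with $u_*\mathcal{O}(6)=\Sym^6({\EE})$ (as recalled above), the projection formula gives
$$
u_*\mathcal{O}_{{\PP}({\EE})}(\overline{D}) \;\cong\; \Sym^6({\EE})\otimes\det({\EE})^{8}\otimes\mathcal{O}_{\overline{\mathcal{M}}_2}(-\Delta_0-\Delta_1)\, ,
$$
so that $\chi_{6,8}:=u_*(s)$ is a global section of $\Sym^6({\EE})\otimes\det({\EE})^{8}\otimes\mathcal{O}(-\Delta_0-\Delta_1)$; composing with the inclusion $\mathcal{O}(-\Delta_0-\Delta_1)\hookrightarrow\mathcal{O}$ we regard it as a \emph{regular} section of $\Sym^6({\EE})\otimes\det({\EE})^{8}$ over $\overline{\mathcal{M}}_2$ that vanishes along $\Delta_0$ and along $\Delta_1$. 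One must also check $\chi_{6,8}\not\equiv 0$: over the open part $\mathcal{M}_2$ the divisor $\overline{D}$ restricts to the branch divisor $D$, which has relative degree $6$ and hence contains no fibre of $u$, so $u_*(s)$ is nonzero on a general fibre.

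Finally I would transport the statement to $\mathcal{A}_2$. The Torelli morphism extends to an isomorphism $\overline{\mathcal{M}}_2\cong\tilde{\mathcal{A}}_2$ identifying ${\EE}$ with the Hodge bundle and sending $\Delta_0$ to the boundary divisor of $\tilde{\mathcal{A}}_2$ lying over the cusp of $\mathcal{A}_2$ (and $\Delta_1$ to the locus of decomposable abelian surfaces). Hence $\chi_{6,8}$ is a holomorphic section of $\Sym^6({\EE})\otimes\det({\EE})^{8}$ on $\tilde{\mathcal{A}}_2$, i.e.\ a Siegel modular form of degree $2$ and weight $(6,8)$, and its vanishing along $\Delta_0$ makes it a cusp form. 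All of this is essentially formal once Proposition~\ref{classofA} is in hand; the only mild points needing care are the non-vanishing of $u_*(s)$, settled by the relative degree of $D$, and the classical dictionary between the boundary divisors of $\overline{\mathcal{M}}_2$ and of $\tilde{\mathcal{A}}_2$ under the extended Torelli map. Thus the genuine obstacle lies entirely in Proposition~\ref{classofA}, not in the Corollary itself.
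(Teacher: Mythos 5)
Your proposal is correct and follows essentially the same route as the paper: invoke Proposition~\ref{classofA} together with $u_*\mathcal{O}(6)=\Sym^6({\EE})$ to push the tautological section of $\mathcal{O}(\overline{D})$ down to a regular section of $\Sym^6({\EE})\otimes\det({\EE})^{8}$ vanishing on $\Delta_0$ and $\Delta_1$, and then use the extended Torelli isomorphism $\overline{\mathcal{M}}_2\cong\tilde{\mathcal{A}}_2$ to read it as a Siegel cusp form of weight $(6,8)$. Your added remarks (projection formula, non-vanishing via the relative degree of $D$, and the boundary dictionary $\Delta_0\leftrightarrow$ cusp) only make explicit what the paper leaves implicit.
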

The relation $10\, \lambda= \delta_0+2\, \delta_1$ quoted above implies that there exists 
a Siegel modular cusp form $\chi_{10}$ of degree $2$ and of
weight $10$ with divisor $\delta_0+2\, \delta_1$.
The quotient $\chi_{6,-2}:= \chi_{6,8}/\chi_{10}$ defines a meromorphic section
of ${\rm Sym}^6({\EE}) \otimes \det{\EE}^{-2}$ that is regular outside $\Delta_1$.

\smallskip

We now analyze the orders of vanishing along $\delta_1$ 
of $\chi_{6,8}$ and $\chi_{6,-2}$. When identifying $\overline{\mathcal{M}}_2$
with $\tilde{A}_2$ we also write $\mathcal{A}_{1,1}$ for $\delta_1$;
it is the locus of products of elliptic curves.

We analyze the orders by working locally on a family over a 
local base $B$ with central fibre a general point $b_1$ of the 
boundary divisor $\Delta_1$. As we mentioned before, the map 
$\varphi: {\mathcal C} \to {\PP}({\EE})$ defined over ${\mathcal M}_2$ 
does not extend to the whole $\overline{\mathcal C}$ over 
$\overline{\mathcal M}_2$ due to the fact that the canonical system has 
base points at the nodes of the curves over the boundary divisor $\Delta_1$. 
On the other hand, by the theory of admissible covers, the ramification 
divisor of the above map $\varphi $ extends to a divisor $S$ on  
$\overline{\mathcal C}$ in a way that avoids the above nodal locus. 
Namely, over $b_1 \in \Delta_1$ the fibre is a nodal curve $C$ which 
is the union of two elliptic curves $C_1$ and $C_2$ meeting at a point $p$. 
The restriction of the ramification divisor on each component is the union 
of the three --additional to $p$-- ramification points of the system $|\mathcal{O}(2p)|$. 
Therefore the extension of the map $\varphi $ is defined on the ramification
divisor $S$. 
The map $\varphi $ maps $C_1\backslash{\{p\}}$ and $C_2\backslash{\{p\}}$ 
to two distinct points $p_1$ and $p_2$ respectively which are defined as follows. 
The fibre of ${\mathbb P}({\mathbb E})$ over $b_1$ can be identified with 
${\PP}(H^0(C, \omega_C))$. The points of $H^0(C, \omega_C)$ have the form 
$(s_1,s_2)$, with $s_i$ a section of $H^0(C_i, \mathcal{O}(p))$. Then the point $p_1$ 
corresponds to the hyperplane $\{ (0, s_2), s_2\in H^0(C_2,\mathcal{O}(p))\}$ and the 
point $p_2$ corresponds to the hyperplane $\{ (s_1, 0), s_1\in H^0(C_1,\mathcal{O}(p))\}$.

The divisor $\overline{D}$, the image of the ramification divisor 
under the extended map $\varphi$, splits then into six irreducible 
components denoted by $D_1, \ldots, D_6$. Over our local base $B$ 
we thus have the six local sections $D_i$ ($i=1,\ldots, 6$) of the 
family ${\mathbb P}({\mathbb E}) \to B$. By the above description 
of the extension of the map $\varphi$, we may conclude that $D_1,D_2,D_3$ 
pass through $p_1$ and $D_4, D_5, D_6$  through $p_2$.

Lifting the sections $D_i$ locally to sections $\sigma_i$ of ${\EE}$ and choosing
a basis $e_1$, $e_2$ of ${\EE}$ over $B$ such that $e_1$ and $e_2$ determine $p_1$ and
$p_2$ in the fibre of ${\PP}({\EE})$ over $z=0$, 
we can write $\sigma_i=a_i e_1+b_i e_2$
for $i=1,\ldots,6$. Then at $z=0$ the functions $b_1,b_2,b_3$ and $a_4,a_5,a_6$ vanish,
while $a_1,a_2,a_3,b_4,b_5,b_6$ do not vanish.
Since by blowing up once we can separate, we may assume that these sections vanish
with order~$1$ at $z=0$. By construction the section $\chi$ of ${\rm Sym}^6({\EE})
\otimes \det({\EE})^{-2}$ is
locally given by
$$
\frac{\sigma_1 \cdots \sigma_6}{z} \, .
$$
We may write $\sigma_1 \cdots \sigma_6$ as
$$
\begin{aligned}
& a_1\cdots a_6 \, e_1^6 + (a_1a_2a_3a_4a_5b_6+\ldots + b_1a_2a_3a_4a_5a_6)\, e_1^5e_2 + \\
& (a_1a_2a_3a_4b_5b_6 +\ldots + b_1b_2a_3 a_4a_5a_6) \, e_1^4e_2^2 + \\
& (a_1a_2a_3b_4b_5b_6+ \ldots + b_1b_2b_3a_4a_5a_6)\, e_1^3e_2^3 + \ldots + b_1\cdots b_6 \, e_2^6\, .\\
\end{aligned}
$$
The order at $z=0$ of the coefficient of $e_1^ie_2^{6-i}$ equals
$$
\min_{\# \Lambda=i}(\#\Lambda^c \cap \{1,2,3\}+\# \Lambda \cap \{4,5,6\})
$$
with $\Lambda$ running though the subsets of $\{1,\ldots,6\}$ of cardinality $i$ and
$\Lambda^c$ denoting the complement.
We find for these orders $(3, 2, 1,0,1,2,3)$ for $i=0,\ldots,6$,
hence for the section $\chi$ given by $\sigma_1\cdots \sigma_6/z$ we find the orders $(2,1,0,-1,0,1,2)$.

\begin{corollary} The section $1$ of the line bundle $\mathcal{O}(\overline{D})$ on ${\PP}({\EE})$ over
$\overline{\mathcal{M}}_{2}$ pushes down 
via ${\PP}({\EE})\to \overline{\mathcal{M}}_2$ 
to the
meromorphic modular form $\chi_{6,-2}$ on $\overline{\mathcal{M}}_2=\tilde{\mathcal{A}}_2$.
The orders of the seven coordinates of  $\chi_{6,-2}$ along $\mathcal{A}_{1,1}$ in $\mathcal{A}_2$
are $(2,1,0,-1,0,1,2)$. 
\end{corollary}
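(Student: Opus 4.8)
The plan is to assemble the cycle-class identity of Proposition~\ref{classofA} with the degeneration analysis of the branch divisor along $\mathcal{A}_{1,1}$ carried out just above, so that the order computation reduces to an elementary combinatorial minimum.

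First I would pin down the relevant line bundle. By Proposition~\ref{classofA} and the relation $10\lambda=\delta_0+2\delta_1$ on $\overline{\mathcal{M}}_2$ we have $8\lambda-\delta_0-\delta_1=-2\lambda+\delta_1$, so that
$$
\mathcal{O}(\overline{D})\;\cong\;\mathcal{O}(6)\otimes u^*\!\bigl(\det(\EE)^{-2}\bigr)\otimes u^*\mathcal{O}(\Delta_1)
$$
on ${\PP}(\EE)$. Applying $u_*$, using $u_*\mathcal{O}(6)=\mathrm{Sym}^6(\EE)$ and the projection formula, gives $u_*\mathcal{O}(\overline{D})\cong\mathrm{Sym}^6(\EE)\otimes\det(\EE)^{-2}\otimes\mathcal{O}(\Delta_1)$; hence the tautological section $1$ of $\mathcal{O}(\overline{D})$ pushes forward to a meromorphic section of $\mathrm{Sym}^6(\EE)\otimes\det(\EE)^{-2}$ that is holomorphic off $\Delta_1$ and has at worst a simple pole there. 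To see it is $\chi_{6,-2}$, I would invoke the other presentation $\mathcal{O}(\overline{D})\cong\mathcal{O}(6)\otimes u^*(\det(\EE)^8\otimes\mathcal{O}(-\Delta_0-\Delta_1))$ of Proposition~\ref{classofA}: under $u_*$ the same section $1$ produces $\chi_{6,8}$, and the two target sheaves are identified by multiplication with $\chi_{10}$ — a section of the line bundle $\det(\EE)^{10}\otimes\mathcal{O}(-\Delta_0-2\Delta_1)$, which is trivial by that relation and whose divisor is $\delta_0+2\delta_1$. Under this identification the pushforward here is $\chi_{6,8}/\chi_{10}=\chi_{6,-2}$, up to a nonzero scalar.

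For the orders along $\mathcal{A}_{1,1}=\Delta_1$ I would argue on a one-dimensional local base $B$ with coordinate $z$, central fibre a general $b_1\in\Delta_1$ and $\{z=0\}=\Delta_1\cap B$, and use exactly the local picture established above: after the admissible-cover base change the extended map is defined along the ramification divisor, $\overline{D}$ breaks over $B$ into six sections $D_1,\dots,D_6$ of ${\PP}(\EE)\to B$, with $D_1,D_2,D_3$ passing through the point $p_1$ and $D_4,D_5,D_6$ through $p_2$, where $p_1,p_2\in{\PP}(\EE_{b_1})$ are the images of $C_1\setminus\{p\}$ and $C_2\setminus\{p\}$ for the nodal fibre $C=C_1\cup_p C_2$. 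Taking a frame $e_1,e_2$ of $\EE$ over $B$ with $e_1,e_2$ determining $p_1,p_2$ at $z=0$ and lifting $D_i$ to $\sigma_i=a_ie_1+b_ie_2$, this says that $b_1,b_2,b_3$ and $a_4,a_5,a_6$ vanish at $z=0$ while $a_1,a_2,a_3,b_4,b_5,b_6$ do not; after one blow-up (separating the three limiting branch points on each elliptic component, which are distinct on a general member) we may assume these vanishing orders equal $1$. Since $\chi_{6,-2}$ is locally $\sigma_1\cdots\sigma_6/z$, writing $\sigma_1\cdots\sigma_6=\sum_{i=0}^6 c_i\,e_1^ie_2^{6-i}$ with $c_i=\sum_{\#\Lambda=i}\prod_{j\in\Lambda}a_j\prod_{j\notin\Lambda}b_j$ we get $\mathrm{ord}_{z=0}(c_i)=\min_{\#\Lambda=i}\bigl(\#(\Lambda^c\cap\{1,2,3\})+\#(\Lambda\cap\{4,5,6\})\bigr)$.

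Finally I would evaluate the minimum: if $\Lambda$ has $x$ elements in $\{1,2,3\}$ and $i-x$ in $\{4,5,6\}$, the quantity is $(3-x)+(i-x)=3+i-2x$, minimized at $x=\min(i,3)$, giving $3,2,1,0,1,2,3$ for $i=0,\dots,6$; dividing by $z$ shifts these to $2,1,0,-1,0,1,2$, the asserted orders. The substantive step is the geometric one — that the limit of the branch divisor over $b_1$ splits into three sections through $p_1$ and three through $p_2$, with the stated transversality after one blow-up — and this is precisely what the admissible-cover description preceding the statement supplies; everything else is the bookkeeping above.
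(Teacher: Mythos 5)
Your proposal is correct and follows essentially the same route as the paper: it identifies the pushforward of the section $1$ with $\chi_{6,8}/\chi_{10}=\chi_{6,-2}$ using Proposition~\ref{classofA} together with $10\,\lambda=\delta_0+2\,\delta_1$, and then computes the orders along $\mathcal{A}_{1,1}$ exactly as the paper does, via the local splitting of $\overline{D}$ into six sections with $D_1,D_2,D_3$ through $p_1$ and $D_4,D_5,D_6$ through $p_2$, the lifts $\sigma_i=a_ie_1+b_ie_2$ vanishing to order $1$ after one blow-up, and the same minimum formula giving $(3,2,1,0,1,2,3)$, hence $(2,1,0,-1,0,1,2)$ after dividing by $z$. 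The only additions are bookkeeping details (the explicit evaluation of the minimum and the line-bundle identification via multiplication by $\chi_{10}$), which agree with the paper's argument.
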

These orders are in agreement with the result of \cite{CFG1} where $\chi_{6,-2}$ was constructed by
invariant theory and properties of modular forms were used to determine these orders.

\smallskip
A different way to construct the form $\chi_{6,8}$ uses the so-called Weierstrass
divisor $W$ in the dual bundle:
$$
W:=\{ (C,\eta) \in {\PP}({\EE}^{\vee}): \text{\rm ${\rm div}(\eta)$ contains a 
Weierstrass point} \}
$$
over $\mathcal{M}_2$.  
Here $C$ denotes a curve of genus $2$ and $\eta$ a differential form on $C$. 
We let $\overline{W}$ 
be the closure of $W$ over $\overline{\mathcal{M}}_2$. 
We then have an identity
due to Gheorghita \cite[Thm 1]{Gheorghita}
$$ 
[\overline{W}] = 6\, [\mathcal{O}_{{\PP}({\EE}^{\vee})}(1)] + 34\, \lambda -3\, \delta_0 - 5\, \delta_1\, ,
$$
where we write $\lambda$ and $\delta_i$ for the pullback of $\lambda$ and $\delta_i$ 
to ${\PP}({\EE}^{\vee})$. Now $\overline{W}$ is an effective divisor and
the push forward of the section $1$ of $\mathcal{O}(\overline{W})$ is a section of
${\rm Sym}^6({\EE}^{\vee})\otimes \det({\EE})^{34}\otimes \mathcal{O}(- 3\delta_0 - 5 \delta_1)$.
For $g=2$ we have ${\EE}^{\vee}\cong {\EE}\otimes
\det({\EE})^{-1}$, hence
${\rm Sym}^6({\EE}^{\vee})\cong {\rm Sym}^6({\EE})\otimes \det({\EE})^{-6}$. This implies that
under the isomorphism of ${\PP}^1$-bundles ${\PP}({\EE})\cong 
{\PP}({\EE}^{\vee})$ the isomorphism identifies $[\overline{W}]$ with $[\overline{D}]$, and we
get in the dual bundle
$$
[\overline{W}]= 6\, [\mathcal{O}(1)] + 28 \, \lambda -3\, \delta_0-5\, \delta_1 = 
6\, [\mathcal{O}(1)]+8\, \lambda-\delta_0-\delta_1\, . 
$$
Using push forward we find again a form of weight $(6,8)$ vanishing on $\delta_1$ and $\delta_0$.
Up to a multiplicative non-zero constant this is $\chi_{6,8}$. 
\begin{remark}
The identity $[\overline{W}]= 6\, [\mathcal{O}(1)] + 28 \, \lambda -3 \, \delta_0-5\, \delta_1 $
implies that there exists a modular form of weight $(6,28)$ vanishing with multiplicity $3$ on $\Delta_0$
and multiplicity $5$ on $\Delta_1$, but this is (up to a multiplicative constant)
the form
 $\chi_{10}^2 \, \chi_{6,8}$ with $\chi_{10}$ the form of weight $10$ with divisor
$\delta_0+2\delta_1$ that displays the relation $10\, \lambda=\delta_0+2\delta_1$.
\end{remark}

\end{section}
\begin{section}{The class of the $k$-canonically embedded curve}\label{classkcanonical}
For the calculation of the classes of effective divisors in ${\PP}({\EE})$
related to  the canonical image of the universal curve it is helpful
to have (part of) the class of the closure of the canonical image in ${\PP}({\EE})$
over $\overline{\mathcal{M}}_g$. Without extra effort we can and will extend the calculation
to the case of the $k$-canonically embedded curve for $k \geq 1$.

We consider the universal family $\pi: \overline{\mathcal{C}}_g\to 
\overline{\mathcal{M}}_g$. This comes with a natural vector bundle
${\EE}_k=\pi_*(\omega_{\pi}^{\otimes k})$ for $k\in{\ZZ}_{\geq 1}$
and for $k=1$ this is the Hodge bundle ${\EE}_1={\EE}$. We write
$u: {\PP}({\EE}_k)\to \overline{\mathcal{M}}_g$ for the natural map.
For $k\geq 2$ the sheaf $\omega_{\pi}^{\otimes k}$ is base point free
for stable curves and the surjection 
$\pi^*{\EE}_k \to \omega_{\pi}^{\otimes k}$
defines a morphism $\varphi_k: \overline{\mathcal{C}}_g \to {\PP}({\EE}_k)$. 
For $k=1$ the sheaf $\omega_{\pi}$ is base point free on $\mathcal{M}_g \cup \Delta_0^0$, 
with $\Delta_0^0\subset \Delta_0$ the open locus with only disconnecting nodes, but it has base points on the nodes lying over
the generic points of the boundary components $\Delta_i$ for $i>0$.
The appendix Section \ref{app} describes the closure of the
image over an open neighborhood of the generic point of $\Delta_i$.

We denote by $\Gamma_k$ the image of $\varphi_k(\mathcal{C}_g)$ over
$\mathcal{M}_g\cup \Delta_0^0$ and
by $\overline{\Gamma}_k$ the closure of $\Gamma_k$ in ${\PP}({\EE}_k)$ over
$\overline{\mathcal{M}}_g$.
We can write the class of $\Gamma_k$ as a cycle on ${\PP}({\EE}_k)$ 
as
$$
[\overline{\Gamma}_k]= \sum_{i=0}^{r-2} h_k^i u^*(\beta_{r-2-i}), \eqno(1)
$$
with $r={\rm rank}({\EE}_k)$, $h_k=c_1(\mathcal{O}_{{\PP}({\EE}_k)}(1))$
and $\beta_j$ a codimension $j$ cycle on $\overline{\mathcal{M}}_g$.
\begin{proposition}\label{curveclass}
We have
$ \beta_0=2\, k (g-1)$ and  $\beta_1=k^2\kappa_1-2k(g-1)c_1({\EE}_k)-\epsilon$
with $\epsilon=\sum_{i=1}^{[g/2]}\delta_i$ for $k=1$ and $\epsilon=0$ else.
\end{proposition}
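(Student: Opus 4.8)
The plan is to compute the class $[\overline{\Gamma}_k]$ by intersecting with test curves and by using the projection formula, reducing everything to standard tautological classes on $\overline{\mathcal{M}}_g$. The coefficient $\beta_0$ is the relative degree of $\overline{\Gamma}_k$ over a generic fiber, so I would first restrict to a general point $[C]\in \mathcal{M}_g$: there $\varphi_k$ is the $k$-canonical embedding $C\hookrightarrow \mathbb{P}(\EE_{k,C})$, whose degree with respect to $\mathcal{O}(1)$ is $\deg(\omega_C^{\otimes k})=2k(g-1)$. This gives $\beta_0=2k(g-1)$ immediately. For $\beta_1$, the idea is to push forward the fundamental relation between $\varphi_k$, the tautological bundle, and $\omega_\pi^{\otimes k}$: on $\mathcal{C}_g$ (over the locus where $\varphi_k$ is a closed immersion) we have $\varphi_k^*\mathcal{O}_{\mathbb{P}(\EE_k)}(1)=\omega_\pi^{\otimes k}$ and $\varphi_{k*}[\mathcal{C}_g]=[\overline{\Gamma}_k]$ as cycles (over the open part), so that for any class $\alpha$ on $\mathbb{P}(\EE_k)$ one has $\int_{\overline{\Gamma}_k}\alpha = \int_{\mathcal{C}_g}\varphi_k^*\alpha$. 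Taking $\alpha = h_k^{r-2}$ recovers $\beta_0$, and taking $\alpha=h_k^{r-3}u^*(\gamma)$ for $\gamma$ a divisor class on $\overline{\mathcal{M}}_g$ produces a linear relation pinning down $\beta_1$ against all of $\mathrm{Pic}(\overline{\mathcal{M}}_g)$.

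Concretely I would argue as follows. Write $s=\varphi_k$, $p=u\circ s=\pi$. The Grothendieck relation on $\mathbb{P}(\EE_k)$ reads $\sum_{j=0}^{r} (-1)^j h_k^{r-j} u^*c_j(\EE_k)=0$, and $u_*(h_k^{r-1})=1$, $u_*(h_k^{r})=-c_1(\EE_k)$ (Grothendieck's conventions for $\mathbb{P}$ of hyperplanes, as stated in the introduction). Now $s_*(1)=[\overline{\Gamma}_k]$ on the open part, and $[\overline{\Gamma}_k]\cdot h_k = s_*(s^*h_k)=s_*(k\,\omega_\pi)$, hence
$$
u_*\big([\overline{\Gamma}_k]\cdot h_k^{r-2}\big)=\pi_*\big(s^*h_k^{r-2}\big)=\pi_*\big(k^{r-2}\omega_\pi^{r-2}\big).
$$
But $r-2=\mathrm{rank}(\EE_k)-2=\dim C-1=0$ for a curve, so this is just $\beta_0$ again; the point is rather to go one degree further. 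Intersecting $(1)$ with $h_k^{r-1}$ and pushing down gives $\beta_1 = u_*\big([\overline{\Gamma}_k]\cdot h_k^{r-1}\big) + (\text{correction from } u_*h_k^{r})$, i.e.
$$
\beta_1 = \pi_*\big(k\,\omega_\pi\cdot s^*h_k^{r-2}\big)\big|_{\text{expanded}} + \beta_0\, c_1(\EE_k),
$$
and expanding $s^*h_k$ via $\varphi_k^*\mathcal{O}(1)=\omega_\pi^{\otimes k}$ together with the Grothendieck relation pulled back to $\mathcal{C}_g$ leads to $\pi_*(k^2\omega_\pi^2)=k^2\kappa_1$, $\pi_*(k\,\omega_\pi\cdot \pi^*c_1(\EE_k))=2k(g-1)c_1(\EE_k)$. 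Combining, $\beta_1=k^2\kappa_1-2k(g-1)c_1(\EE_k)$ over $\mathcal{M}_g\cup\Delta_0^0$.

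The one genuinely delicate point — and the main obstacle — is the boundary correction term $-\epsilon$, which is present only for $k=1$. For $k\ge 2$ the linear system $|\omega_\pi^{\otimes k}|$ is base point free on all of $\overline{\mathcal{M}}_g$, so $\varphi_k$ is a genuine morphism, the identity $\varphi_k^*\mathcal{O}(1)=\omega_\pi^{\otimes k}$ holds globally, and the computation above is valid on the compactification with no correction: $\epsilon=0$. For $k=1$, however, $\varphi=\varphi_1$ fails to extend over $\Delta_i$ with $i>0$ because of base points at the non-disconnecting-in-the-bad-sense nodes, exactly as discussed in Section 2 for $g=2$. To handle this I would localize near the generic point of each $\Delta_i$ ($i\ge 1$), perform the semistable base change described in the Appendix (Section \ref{app}), replacing the node by a $(-2)$-curve $R$ and using $\omega_{\tilde f}(-R)$ as the very ample line bundle realizing the extended map $\varphi'$ with ${\varphi'}^*\mathcal{O}(1)=\omega_{\tilde f}(-R)$. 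Repeating the $h^{r-1}$-intersection computation with $\omega_{\tilde f}(-R)$ in place of $\omega_\pi$, the extra term $\pi_*(\omega_{\tilde f}\cdot(-R))$ together with $\pi_*(R^2)$ contributes, after undoing the base change (which multiplies $\delta_i$ by the appropriate ramification index, as in the step "$2b_1=\delta_1$" in Proposition \ref{classofA}), precisely the class $-\delta_i$; summing over $i=1,\dots,[g/2]$ yields $-\epsilon=-\sum_{i=1}^{[g/2]}\delta_i$. The bookkeeping of these local contributions — in particular getting the coefficient of each $\delta_i$ to be exactly $1$, independent of $i$ — is where care is needed, and it is cleanest to do it componentwise over each $\Delta_i$ using the explicit chain $C'+R+C''$ model from the Appendix.
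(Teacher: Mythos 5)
Your strategy is the same as the paper's (degree of the $k$-canonical image for $\beta_0$; projection formula plus the Grothendieck relation for $\beta_1$; the semistable model of the Appendix with ${\varphi'}^*\mathcal{O}(1)=\omega(-R)$ for the $k=1$ boundary terms), and the $\beta_0$ step is fine, but the central computation as written does not work. The rank is $r=g$ for $k=1$ and $r=(2k-1)(g-1)$ for $k\geq 2$, so your claim that $r-2=\dim C-1=0$ is false except for $g=2$, $k=1$; consequently intersecting $[\overline{\Gamma}_k]$ with $h_k^{r-1}$ (or with $h_k^{r-2}$, $h_k^{r-3}u^*\gamma$) and pushing down yields a class of codimension $r-2$ on $\overline{\mathcal{M}}_g$, not a divisor class, and in particular not $\beta_1$. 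The correct test class is $h_k^2$: from $[\overline{\Gamma}_k]=\sum_{i=0}^{r-2}h_k^i\,u^*\beta_{r-2-i}$ one gets $u_*([\overline{\Gamma}_k]\,h_k^2)=\beta_1+\beta_0\,u_*(h_k^r)$, while ${\varphi_k}_*$ and the projection formula give $u_*([\overline{\Gamma}_k]\,h_k^2)=\pi_*(k^2\omega_\pi^2)=k^2\kappa_1$. Moreover, with the paper's convention (tautological quotient $u^*\EE_k\to\mathcal{O}_{\PP(\EE_k)}(1)$ and relation $\sum_i(-1)^ih_k^ic_{r-i}(\EE_k)=0$) one has $u_*(h_k^r)=+c_1(\EE_k)$, not $-c_1(\EE_k)$ as you assert; with your sign, the identity you assemble (``$\beta_1=\cdots+\beta_0\,c_1(\EE_k)$'') would produce $\beta_1=k^2\kappa_1+2k(g-1)c_1(\EE_k)$, so the correct formula at the end of your second paragraph is asserted rather than derived.

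The second genuine gap is the boundary coefficient for $k=1$, which is the only non-routine part of the statement and which you explicitly defer (``the bookkeeping \dots is where care is needed''). You name the right model but never compute the coefficient of $\delta_i$; note also that your proposed ``extra term $\pi_*(\omega_{\tilde f}\cdot(-R))$'' vanishes identically, since $\omega$ restricted to the $(-2)$-curve $R$ has degree $0$, a sign that the local computation was not carried through. What is actually needed, and what the paper does, is to redo the $h^2$-computation over a one-dimensional base near the generic point of $\Delta_i$ using ${\varphi'}^*h=\omega_{\pi'}(-R)$, where the correction comes from $R^2=-2$, from the fact that $\varphi'$ has degree $2$ on $R$, and from the degree-$2$ base change (the relation $2b_1=\delta_i$); only after combining these factors does one obtain the coefficient $-1$ for each $\delta_i$, i.e.\ $\epsilon=\sum_{i=1}^{[g/2]}\delta_i$. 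Without that computation, and with the dimension and sign errors above corrected, your argument establishes the interior formula $\beta_1=k^2\kappa_1-2k(g-1)c_1(\EE_k)$ and the case $k\geq2$, but not the value of $\epsilon$.
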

Using $c_1({\EE}_k)=\frac{k(k-1)}{2}\kappa_1+\lambda$ and 
$\kappa_1=12\lambda - \sum_{i=0}^{[g/2]}\delta_i$ (by \cite{Mumford1977}) 
we can write $\beta_1$ as
$$
\beta_1=\left( (1-g)(12k^3-12k^2+2k)+ 12\,k^2 \right)\, \lambda + 
\left( (g-1)k-g\right)\, k^2  \sum_{i=0}^{[g/2]} \delta_i - \epsilon \, .
$$
\begin{proof}
We start with the case $k\geq 2$. In this case $\overline{\Gamma}_k$
is the image of $\overline{\mathcal{C}}_g$ under $\varphi_k$
and $\varphi_k^*(h_k)=k\, \omega_{\pi}$.
Since the image of the generic fibre has degree $2k(g-1)$ we find
$\beta_0=2k(g-1)$. The hyperplane class $h_k$ satisfies
$\sum_{i=0}^r (-1)^i h_k^i c_{r-i}({\EE}_k)=0$. 
For dimension reasons we have $u_*(h_k^i)=0$ for $i\leq r-2$ and $u_*(h_k^{r-1})=1$.
Thus we get
$$
u_*({\varphi_k}_*[1]h_k^2)=u_*{\varphi_k}_*(k^2\omega_{\pi}^2)=
k^2\pi_*(\omega_{\pi}^2)=k^2\,\kappa_1\, ,
$$
with $\pi_*(\omega_{\pi}^2)=\kappa_1$, while on the other hand by (1)
$$
u_*(({\varphi_k}_*[1])h_k^2)=(u_*h_k^r)\beta_0+(u_*h_k^{r-1})\beta_1=
2k(g-1)\, c_1({\EE}_k)+\beta_1\, ,
$$
giving $\beta_1=k^2\kappa_1-2k(g-1)c_1({\EE}_k)$
and this settles the case
$k\geq 2$. The same argument works for $k=1$ as long as we work on
$\mathcal{M}_g\cup \Delta_0^0$. To get the coefficients of the $\delta_i$
for $i>0$ we work over a $1$-dimensional base $B$ in an open neighborhood $U_i$ of the generic point of $\Delta_i$ with special fibre
$C'+R+C^{\prime\prime}$ as in the appendix
Section \ref{app} where the extension $\varphi'$ of $\varphi$ is
defined by $\omega_{\pi'}(-R)$. The contribution of $\delta_i$ to
$2\beta_1$ is now $\pi^{\prime}_*(\omega(-R)^2)$, where the coefficient $2$ of
$\beta_1$ comes from the fact that $\varphi'$ is of degree $2$ on $R$.
We get 
$\pi^{\prime}_*(\omega_{\pi'}(-R)^2)=
\pi^{\prime}_*(\omega_{\pi'}^2)+\pi^{\prime}_*(R^2)=
-2\, \delta_i-2\, \delta_i=-4\, \delta_i$, 
as the fibre has two singular points and $R^2=-2$. 
Putting everything together results in the
given formula.
\end{proof}
\end{section}
\begin{section}{The Case of Genus Three}
Here there is no restriction on the characteristic. 
We consider the moduli stack
$\mathcal{M}_3$ of curves of genus $3$ over our field $k$
and the universal
curve $\pi:\mathcal{C}\to \mathcal{M}_3$.
The canonical map defines a morphism $\varphi: \mathcal{C} \to {\PP}({\EE})$
and we thus obtain the image divisor $D$ in ${\PP}({\EE})$ over $\mathcal{M}_3$. 
We have a commutative diagram 
$$
\begin{xy}
\xymatrix{
\mathcal{C} \ar[rd]_{\pi} \ar[r]^{\varphi} & {\PP}({\EE})\ar[d]^u \\
& \mathcal{M}_3 \\
}
\end{xy}
$$
We consider the closure $\overline{D}$ of $D$ in ${\PP}({\EE})$ 
over $\overline{\mathcal{M}}_3$. 
The canonical image of the generic curve is a quartic curve.
Thus we have a relation $ [\overline{D}]=[\mathcal{O}(4)]+u^*(A)$
in the rational Picard group of ${\PP}({\EE})$
with $A$ a divisor class on $\overline{\mathcal{M}}_3$ 
given in Proposition \ref{curveclass}. 

\begin{corollary}\label{classofAII}
We have 
$ [\overline{D}]=[\mathcal{O}(4)]+u^*(8\, \lambda-\delta_0-2\, \delta_1)$.
\end{corollary}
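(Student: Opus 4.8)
The plan is to apply Proposition \ref{curveclass} directly with $g=3$ and $k=1$, and then simplify the resulting divisor class on $\overline{\mathcal{M}}_3$. Writing $[\overline{D}]=[\mathcal{O}(4)]+u^*(A)$, we have $A=\beta_1$ in the notation of that proposition, since the canonical image of a generic genus $3$ curve is a plane quartic (so $\beta_0 = 2k(g-1)=4$, matching $[\mathcal{O}(4)]$), and the rank of $\EE=\EE_1$ is $r=3$, so the expansion $(1)$ has only the two terms $h^1 u^*(\beta_0) + h^0 u^*(\beta_1)$ contributing to the relation $[\overline{D}]=h\cdot\beta_0 + \beta_1$ — wait, more precisely $[\overline{\Gamma}_1] = h^1 u^*(\beta_0) + u^*(\beta_1)$ with $\beta_0 = 4$, so $A = \beta_1$.

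First I would specialize the closed formula for $\beta_1$ given just after Proposition \ref{curveclass}. For $k=1$ the cubic and quadratic terms in $k$ collapse: $(1-g)(12k^3-12k^2+2k)+12k^2$ becomes $(1-g)\cdot 2 + 12 = 2(1-g)+12$, which for $g=3$ equals $-4+12 = 8$, giving the coefficient $8\,\lambda$. Next, the coefficient of $\sum_{i=0}^{[g/2]}\delta_i$ is $\big((g-1)k-g\big)k^2 = (g-1)-g = -1$ when $k=1$, so this term contributes $-\sum_{i=0}^{1}\delta_i = -\delta_0 - \delta_1$ (for $g=3$, $[g/2]=1$). Finally, since $k=1$ we have $\epsilon = \sum_{i=1}^{[g/2]}\delta_i = \delta_1$, so $-\epsilon = -\delta_1$. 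Adding these: $A = 8\,\lambda - \delta_0 - \delta_1 - \delta_1 = 8\,\lambda - \delta_0 - 2\,\delta_1$, which is exactly the claimed class.

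There is really no substantial obstacle here; the corollary is a pure specialization. The one point requiring a word of care is the identification $A = \beta_1$: one must check that no higher power $h^i$ with $i \geq 2$ enters the class of the divisor $\overline{D}$, which holds because $\overline{D}$ is a divisor (codimension $1$) in the $\PP^2$-bundle ${\PP}({\EE})$, so its class is $h\cdot(\text{pullback of a number}) + (\text{pullback of a divisor class})$, i.e. $[\overline{D}] = \beta_0\, h + u^*(\beta_1)$ with $\beta_0 \in \ZZ$ and $\beta_1 \in \mathrm{Pic}(\overline{\mathcal{M}}_3)_{\QQ}$ — this matches $(1)$ truncated to the two lowest-degree terms. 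Then $\beta_0 = 4$ because the generic canonical curve is a quartic, and $\beta_1 = A$ by comparing with $[\overline{D}]=[\mathcal{O}(4)]+u^*(A)$. I would also remark that, alternatively, one can avoid invoking the closed formula and instead rerun the short argument of Proposition \ref{curveclass}: $\beta_1 = \kappa_1 - 2(g-1)c_1(\EE) - \epsilon = \kappa_1 - 4\lambda - \delta_1$ on $\mathcal{M}_3 \cup \Delta_0^0$ with the $\delta_i$-correction from the base change over $\Delta_1$, and then substitute $\kappa_1 = 12\lambda - \delta_0 - \delta_1$ to land on $8\lambda - \delta_0 - 2\delta_1$.
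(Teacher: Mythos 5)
Your proof is correct and is essentially the paper's own argument: the corollary is stated there precisely as the specialization of Proposition \ref{curveclass} to $g=3$, $k=1$, with $\beta_0=4$ giving $[\mathcal{O}(4)]$ and $\beta_1=8\,\lambda-\delta_0-2\,\delta_1$ giving $A$. Your arithmetic (including the $\epsilon=\delta_1$ term for $k=1$) and the identification $A=\beta_1$ via the structure of the Picard group of the $\PP^2$-bundle match the intended reasoning.
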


The divisor $\overline{D}$ is effective over $\overline{\mathcal{M}}_3-\Delta_1$. 
Because of the relation 
$[\overline{D}]=[\mathcal{O}(4)]+u^*(8\, \lambda-\delta_0-2\, \delta_1)
$
the corresponding section $1$ of $\mathcal{O}(\overline{D})$ maps under $u_*$ to a section $\psi$ of
${\rm Sym}^4({\EE}) \otimes \det({\EE})^{8}$ that is regular outside $\Delta_1$
and vanishes on $\Delta_0$. In view of the even powers ${\rm Sym}^4$ and $8$, 
this section $\psi$ is invariant under the action of $-1$
on the fibres of ${\EE}$. As the action of $-1$ defines the involution of the 
double covering
of stacks $\mathcal{M}_3 \to \mathcal{A}_3$, the section $\psi$ 
descends to a section $\chi_{4,0,8}$ of 
${\rm Sym}^4({\EE}) \otimes \det({\EE})^{8}$ on the image of 
$\overline{\mathcal{M}}_3-\Delta_1$ under the Torelli morphism $\overline{\mathcal{M}}_3
\to \tilde{\mathcal{A}}_3$, with $\tilde{\mathcal{A}}_3$
the standard second Voronoi compactification of $\mathcal{A}_3$.
Since the image of $\Delta_1$ in $\tilde{\mathcal{A}}_3$ is of codimension~$2$,
the section $\chi_{4,0,8}$ extends to a regular section of 
${\rm Sym}^4({\EE}) \otimes \det({\EE})^{8}$ on all of $\mathcal{A}_3$, 
and then by the Koecher Principle it extends to
$\tilde{\mathcal{A}}_3$. Thus it defines a regular Siegel
modular cusp form $\chi_{4,0,8}$ of degree $3$ and weight $(4,0,8)$.

\begin{corollary} Let $\overline{D}$ be the closure
of the canonical curve over $\mathcal{M}_3$ in ${\PP}({\EE})$ and
$s$ the natural section $1$ of $\mathcal{O}(\overline{D})$. Then $\chi_{4,0,8}=u_*(s)$ is a
Teichm\"uller modular form and it descends to a Siegel modular cusp form
of degree $3$ and weight $(4,0,8)$.
\end{corollary}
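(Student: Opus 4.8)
The plan is to read the statement off Corollary~\ref{classofAII} by feeding the tautological section of $\mathcal{O}(\overline{D})$ through $u_*$ and then running the same extension-and-descent argument that was used for $\chi_{6,8}$ in genus two. First I would record that Corollary~\ref{classofAII} gives an isomorphism of line bundles
$$
\mathcal{O}(\overline{D})\;\cong\;\mathcal{O}_{{\PP}({\EE})}(4)\otimes u^*\!\left(\det({\EE})^8\otimes\mathcal{O}_{\overline{\mathcal{M}}_3}(-\delta_0-2\,\delta_1)\right)\, .
$$
Applying $u_*$ to the section $s=1$ and using the projection formula together with $u_*\mathcal{O}_{{\PP}({\EE})}(4)={\rm Sym}^4({\EE})$ produces $\psi:=u_*(s)$, a section of ${\rm Sym}^4({\EE})\otimes\det({\EE})^8$ which, over $\overline{\mathcal{M}}_3-\Delta_1$ — where $\overline{D}$ is genuinely effective and the extended canonical map is a morphism to ${\PP}({\EE})$ — is regular and has a zero along $\Delta_0$. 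This is the candidate Teichm\"uller form away from $\Delta_1$.

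Next I would descend $\psi$ along the Torelli map. The weight $(4,0,8)$ has even symmetric part, and $-{\rm Id}$ acting on the rank-three bundle ${\EE}$ acts on $\det({\EE})^8$ by $(-1)^{3\cdot 8}=1$; hence $\psi$ is invariant under the action of $-{\rm Id}$ on the fibres of ${\EE}$, which is exactly the involution of the double cover of stacks $\mathcal{M}_3\to\mathcal{A}_3$. Therefore $\psi$ descends to a section $\chi_{4,0,8}$ of ${\rm Sym}^4({\EE})\otimes\det({\EE})^8$ over the image of $\overline{\mathcal{M}}_3-\Delta_1$ in the second Voronoi compactification $\tilde{\mathcal{A}}_3$, i.e. over $\tilde{\mathcal{A}}_3$ minus the decomposable locus.

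The two extension steps then finish the argument. The image of $\Delta_1$ under the Torelli morphism is the locus $\mathcal{A}_1\times\mathcal{A}_2$ of decomposable principally polarized abelian threefolds, which has dimension $4$ and hence codimension $2$ in $\mathcal{A}_3$; since $\mathcal{A}_3$ is normal and ${\rm Sym}^4({\EE})\otimes\det({\EE})^8$ is locally free, Hartogs' theorem extends $\chi_{4,0,8}$ to a regular section over all of $\mathcal{A}_3$, and then, since $g=3\geq 2$, the Koecher principle extends it over the boundary of $\tilde{\mathcal{A}}_3$. This yields a holomorphic Siegel modular form of degree $3$ and weight $(4,0,8)$. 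It is a cusp form because its pullback to $\overline{\mathcal{M}}_3$ agrees with $\psi$ on the dense open $\mathcal{M}_3$ and hence vanishes along $\Delta_0$, and $\Delta_0$ dominates the boundary divisor of $\tilde{\mathcal{A}}_3$ under Torelli, forcing $\chi_{4,0,8}$ to vanish on that divisor. Finally, the same pullback is a regular section of ${\rm Sym}^4({\EE})\otimes\det({\EE})^8$ on all of $\overline{\mathcal{M}}_3$ restricting to $\psi=u_*(s)$ away from $\Delta_1$; this is the asserted Teichm\"uller modular form, and it descends to the Siegel cusp form $\chi_{4,0,8}$.

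The step I expect to be the main obstacle is the bookkeeping near $\Delta_1$: one must verify that $\psi$ is genuinely regular (not merely meromorphic) over $\overline{\mathcal{M}}_3-\Delta_1$ so that the codimension-two extension across $\mathcal{A}_1\times\mathcal{A}_2$ is legitimate, and that the descent along $\mathcal{M}_3\to\mathcal{A}_3$ is valid as a statement about the stacks and the Hodge bundle on them rather than about coarse spaces. Everything else is the now-standard dictionary between effective divisors in ${\PP}({\EE})$ and modular forms already employed in the genus-two case.
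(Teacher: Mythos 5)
Your proposal is correct and follows essentially the same route as the paper: push forward the tautological section using the class from Corollary~\ref{classofAII}, observe invariance under $-\mathrm{Id}$ from the even weights, descend along Torelli, extend across the codimension-two image of $\Delta_1$ and then apply the Koecher principle, with cuspidality coming from the vanishing along $\Delta_0$. The only additions you make are to spell out the Hartogs extension and the cusp-form argument slightly more explicitly than the paper does, which is harmless.
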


The class $\mathfrak{H}$ of the hyperelliptic locus $\overline{\mathcal{H}}_3$
in $\overline{\mathcal{M}}_3$ satisfies (\cite[p.\ 140]{Harris})
$$
\mathfrak{h}= 9\, \lambda -\delta_0 - 3\, \delta_1 \, . \eqno(2)
$$
The relation (2) shows that there exists a scalar-valued Teichm\"uller modular
form $\chi_9$ of weight $9$ on $\overline{\mathcal{M}}_3$. Its square is invariant under
the action of $-1$ on the fibres of ${\EE}$, hence descends to a Siegel modular
form of weight $18$. Up to a multiplicative scalar this is Igusa's modular form
$\chi_{18}$.

If we divide $\chi_{4,0,8}$ by $\chi_9$ we obtain a meromorphic section of 
${\rm Sym}^4({\EE}) \otimes \det({\EE})^{-1}$ on $\overline{\mathcal{M}}_3$
that is regular on $\mathcal{M}_3$ outside the hyperelliptic locus. This form was used in \cite{CFG2}
to construct Teichm\"uller modular forms and Siegel modular forms by invariant
theory.
\end{section}
\begin{section}{Moduli of hyperelliptic curves as a stack quotient} \label{hyperell-stack}
In this section we discuss the stack quotient description of 
the moduli of hyperelliptic curves.
We consider
hyperelliptic curves in characteristic not $2$. A hyperelliptic curve of genus $g$ is
a morphism $\alpha: C \to {\PP}^1$ of degree $2$ where $C$ is a smooth curve of genus $g$.
A morphism $a: \alpha \to \alpha'$ between two hyperelliptic curves is a commutative diagram
$$
\begin{xy}
\xymatrix{
{C} \ar[r]\ar[d]_{\alpha}& C'\ar[d]^{\alpha'} \\
{\PP}^1 \ar[r] & {\PP}^1 \\
}
\end{xy}
$$
A hyperelliptic curve $C$ of genus $g$ can be written as $y^2=f(x)$ with $f \in {\CC}[x]$
of degree $2g+2$. In fact, choosing a basis $(x_1,x_2)$ of the $g^1_2$ defines the
morphism $\alpha$. Let $W=\langle x_1,x_2 \rangle$, a vector space (over our algebraically
closed base field) of dimension
$2$, and $L=\alpha^*(O_{\PP^1}(1))$. By Riemann-Roch we have $\dim H^0(C,L^{g+1})=g+3$,
while $\dim {\rm Sym}^{g+1}(W)=g+2$, so we have a non-zero element $y\in H^0(C,L^{g+1})$ which is
anti-invariant under the involution corresponding to $\alpha$. The anti-invariant subspace
of $H^0(C,L^{g+1})$ has dimension $1$. Then $y^2$ is invariant
and lies in ${\rm Sym}^{2g+2}(W)$. Thus we find the equation $y^2=f(x_1,x_2)$
with $f$ homogeneous of degree $2g+2$ and with non-zero discriminant.

We have made two choices here: a generator  $y$ of $H^0(C,L^{g+1})^{(-1)}$,
a space of dimension~$1$, and a basis of $W$.
We can change the choice of $y$ (by a non-zero scalar)
and the choice of a basis of $W$ by $\gamma=(a,b;c,d) \in {\rm GL}(W)$. 
The action of  ${\rm GL}(W)$ is on the right via
$$
f(x_1,x_2) \mapsto f(ax_1+bx_2,cx_1+dx_2) \, .
$$
If we let ${\rm GL}(W)$ act on $y$ by a power of the determinant,
then this action preserves the type of equation.
In inhomogeneous form the action by ${\rm GL}(W)$ is by
$$
f\mapsto f(\frac{ax+b}{cx+d}), \quad y\mapsto y/(cx+d)^{g+1} \, ,
$$
with the following effect on the equation:
$$
y^2=f(x) \mapsto y^2=(cx+d)^{2g+2}f(\frac{ax+b}{cx+d}) \, .
$$
The last expression on the right-hand-side
can be written as binary form of degree $2g+2$.

The stabilizer of a generic $f \in {\rm Sym}^{2g+2}(W)$ is $\mu_{2g+2}$, the roots of unity of order dividing $2g+2$.
Since we want a stabilizer of order $2$ for the generic element, we consider a twisted action:
define the ${\rm GL}(W)$-representation
$$
W_{a,b}={\rm Sym}^a(W) \otimes {\det}(W)^{\otimes b}\, .
$$
This can be identified with ${\rm Sym}^a(W)$ as a vector space, but the action by ${\rm GL}(W)$
is different.
Inside this space $W_{a,b}$ we have the open subspace
$W_{a,b}^0$ of homogeneous polynomials of degree $a$
with non-zero discriminant.
We now distinguish two cases.

\textbf{Case 1.} $g$ even. Here we consider the stack quotient
$$
[W_{2g+2,-g}^0/{\rm GL}(W)] \, .
$$
This stack quotient can be identified with the moduli stack $\mathcal{H}_g$ of hyperelliptic curves of genus $g$ for $g$ even.
Indeed, the action of $t\cdot {\rm Id}_W$ is (on inhomogeneous equations) by
$$
f \mapsto t^{-2g} f, \quad y \mapsto y/t^{g+1}\, ,
$$
hence $y^2=f$ maps to $y^2=t^2\, f$, so that the stabilizer is $\mu_2$, as required.
Note also that the action of $-1 \in {\rm GL}(W)$ is by $y \mapsto -y$, so $y$ is an
odd element.
A basis of $H^0(C,\Omega^1_C)$ is given by
$$
x^i dx/y, \quad (i=0,\ldots,g-1)\, .
$$
The action on $dx$ is by $(ad-bc) dx/(cx+d)^2$ resulting in the action on the space of
differentials by
$$
x^i dx/y \mapsto (ad-bc) (cx+d)^{g-1-i} (ax+b)^i\, dx/y \, .
$$
If we forget the twisted action on $y$,
we can identify $H^0(C,\Omega^1_C)$ with $W_{g-1,1}$. But  $y^2$ must
be viewed as an element of $W_{2g+2,-g}$,
so the action of $t \, 1_W$
on $y$ should be twisted by  $t^{-g}={\det}^{-g/2}$. We get
$$
H^0(C,\Omega_C^1) \cong W_{g-1,(2-g)/2} 
\quad \text{\rm  for $g$ even.}
$$
We see that under the identification $h: [W_{g+2,-g}^0/{\rm GL}(W)] \langepijl{\sim} \mathcal{H}_g$
the pullback $h^*({\EE})$ 
of the Hodge bundle ${\EE}$
is the equivariant bundle $W_{g-1,(2-g)/2}$.
The action of $-1_W$ is by $-1$ on $W_{g-1,(2-g)/2}$.
We also observe
$h^*(\det({\EE}))={\det}(W)^{g/2}$.

\bigskip
\textbf{Case 2.} $g$ odd. Here we take $W_{2g+2,-g+1}$.
\begin{remark} If we consider $W_{2g+2,r}$ then $r$ has to be even,
since as above
we later view $y^2$ as an element of $W_{2g+2,r}$
and we need an action by ${\det}^{r/2}$
on $y$.
\end{remark}
Here the stabilizer of a generic element is $\mu_4$. Now on inhomogeneous equations the action is
by
$$
f \mapsto t^{-2g+2} \, f, \quad y \mapsto y/t^{g+1}\, ,
$$
hence $y^2=f$ maps to $y^2=t^4 \, f$. Note that here $-1_W$
acts by $f\mapsto f$ and $y\mapsto y$.
But $\sqrt{-1}_{W}$ acts by $f\mapsto f$ and $y\mapsto -y$. 
To get the right stack
quotient with stabilizer of the generic element of order $2$, we take
$$
[W^0_{2g+2,1-g}/({\rm GL}(W)/(\pm 1_W))]\, .
$$
The action on the differentials $x^idx/y$ with $i=0,\ldots,g-1$ is by
$$
x^idx/y \mapsto (ad-bc)^{(1-g)/2} (cx+d)^{g-1-i}(ax+b)^i\, dx/y\, ,
$$
hence without twisting we get $H^0(C,\Omega_C^1)=W_{g,1}$. Since we
view $y^2$ as element of $W_{g+3,1-g}$
we find under $h: [W^0_{2g+2,1-g}/({\rm GL}(W)/(\pm 1_W))] \langepijl{\sim} \mathcal{H}_g$
that
$h^*({\EE})=W_{g-1,(3-g)/2}$.
The element $\sqrt{-1}\, 1_W$ acts on the differentials as $(-1)^{(3-g)/2}(\sqrt{-1})^{g-1}
=-1$.

We summarize.

\begin{proposition} Writing
$W_{a,b}={\rm Sym}^{a}(W)\otimes \det(W)^{b}$
we have the identification of stacks
$$
h^{-1}: \mathcal{H}_g \langepijl{\sim} \begin{cases}
[W_{2g+2,-g}^0/{\rm GL}(W)] & g \quad  \text{\rm even} \\
[W^0_{2g+2,1-g}/({\rm GL}(W)/(\pm 1_W))] & g \quad \text{\rm odd,}\\
\end{cases}  
$$
and
$$
h^*({\EE})\cong \begin{cases} W_{g-1,(2-g)/2}  &  g \quad  \text{\rm even} \\
W_{g-1,(3-g)/2} & g \quad \text{\rm odd,}\\
\end{cases} 
\qquad
h^*(\det({\EE}))= \begin{cases} \det(W)^{g/2} & g \quad  \text{\rm even} \\
\det(W)^{g} & g \quad \text{\rm odd.} \\ \end{cases} 
$$
\end{proposition}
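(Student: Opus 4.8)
The proof is a direct computation that assembles the pieces developed in the two cases above. The plan is to treat the even and odd cases in parallel, each time following the same three-step recipe: first identify the group $\mathrm{GL}(W)$-representation that computes $H^0(C,\Omega^1_C)$ \emph{before} any determinant twist, then determine the correct twist by tracking how $t\cdot 1_W$ must act on the chosen anti-invariant section $y$, and finally read off $\det$ of the resulting equivariant bundle.

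\textbf{Step 1 (untwisted differentials).} For a hyperelliptic curve $y^2=f(x)$ of genus $g$, the basis $x^i\, dx/y$ with $i=0,\dots,g-1$ of $H^0(C,\Omega^1_C)$ transforms under $\gamma=(a,b;c,d)\in\mathrm{GL}(W)$ by
$$
x^i\, dx/y \;\longmapsto\; (ad-bc)^{\epsilon}\,(cx+d)^{g-1-i}(ax+b)^i\, dx/y,
$$
where $\epsilon$ depends only on how $y$ is acted upon: $\epsilon=1$ when $y$ carries no determinant twist in the even case and $\epsilon=(1-g)/2$ in the odd case, as worked out in Case~1 and Case~2. Forgetting the twist on $y$, the span of the polynomials $(cx+d)^{g-1-i}(ax+b)^i$ is exactly $\mathrm{Sym}^{g-1}(W)$, so one gets $W_{g-1,1}$ for $g$ even and $W_{g,1}$---wait, rather $W_{g-1,(1-g)/2}$---as the raw answer; the point is that the symmetric-power part is always $\mathrm{Sym}^{g-1}(W)$ and only the $\det$-exponent changes.

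\textbf{Step 2 (correct twist).} In both cases the stack presentation is forced by demanding that the generic stabilizer be $\mu_2$, which pins down the line bundle in which $y^2$ must live: $W_{2g+2,-g}$ for $g$ even and $W_{2g+2,1-g}$ for $g$ odd. Since $y^2$ is a \emph{square}, the action of $t\cdot 1_W$ on $y$ itself must be twisted by $\det^{(-g)/2}$ (even case) or $\det^{(1-g)/2}$ (odd case). Folding this correction into the transformation law of $x^i\,dx/y$ shifts the $\det$-exponent of $W_{g-1,1}$ down by $g/2$ in the even case, yielding $W_{g-1,(2-g)/2}$, and down by $(g-1)/2$ relative to the odd-case raw bundle, yielding $W_{g-1,(3-g)/2}$. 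This gives the stated formula for $h^*(\mathbb{E})$. The computation of $h^*(\det(\mathbb{E}))$ is then immediate: $\det(W_{g-1,c})=\det(\mathrm{Sym}^{g-1}W)\otimes\det(W)^{gc}=\det(W)^{\binom{g}{2}+gc}=\det(W)^{g(g-1)/2+gc}$, and substituting $c=(2-g)/2$ gives exponent $g/2$, while $c=(3-g)/2$ gives exponent $g$.

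\textbf{Main obstacle.} There is no serious obstacle: everything reduces to careful bookkeeping of half-integer $\det$-exponents, and the only place one can go wrong is a sign or a factor of two in the twist coming from passing between $y$ and $y^2$, or (in the odd case) from the fact that one quotients by $\mathrm{GL}(W)/(\pm 1_W)$ rather than $\mathrm{GL}(W)$. I would double-check the exponents by verifying the action of $\sqrt{-1}\cdot 1_W$ on $h^*(\mathbb{E})$: it must act by $-1$ (since it generates the kernel of $\mathcal{H}_g\to\mathcal{A}_g$ on the hyperelliptic locus, i.e.\ the hyperelliptic involution acts as $-1$ on differentials), and indeed $(-1)^{(3-g)/2}(\sqrt{-1})^{g-1}=-1$ in the odd case, as noted, with the analogous check in the even case---this consistency check is what makes the half-integer exponents trustworthy.
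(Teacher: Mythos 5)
Your proposal is correct and follows essentially the same route as the paper, whose proof of this proposition is just the summary of Cases 1 and 2 of Section 5: the transformation law of the basis $x^i\,dx/y$, the determinant twist dictated by placing $y^2$ in $W_{2g+2,-g}$ (resp.\ $W_{2g+2,1-g}$) so that the generic stabilizer is $\mu_2$ (resp.\ $\mu_4$ modulo $\pm 1_W$), and the count $\det(W_{g-1,c})=\det(W)^{g(g-1)/2+gc}$ giving $\det(W)^{g/2}$ and $\det(W)^{g}$. One bookkeeping caveat: in the odd case the shift by $-(g-1)/2$ must be applied to the untwisted exponent $1$ (coming from $dx$), yielding $(3-g)/2$, and not to your ``raw'' $W_{g-1,(1-g)/2}$, which would give $1-g$; your final answers and the check that $\sqrt{-1}\cdot 1_W$ acts by $-1$ are nevertheless the correct ones.
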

For a somewhat different description see \cite[Cor.\ 4.7, p.\ 654]{A-V}.

\smallskip

Recall that the moduli stack $\mathcal{H}_g$ has as compactification the closure
$\overline{\mathcal{H}}_g$ of $\mathcal{H}_g$ inside the moduli stack $\overline{\mathcal{M}}_g$.
The Picard group of $\mathcal{H}_g$ is known by \cite{A-V} to be finite cyclic for $g\geq 2$ 
of order $4g+2$ if $g$ is even and $8g+4$ else. The rational Picard group of
$\overline{\mathcal{H}}_g$ is known by Cornalba (see \cite{Cornalba}) 
to be free abelian of rank $g$ generated by
classes $\delta_i$ and $\zeta_j$ for $i=0,\ldots,\lfloor g/2\rfloor$ and
$j=1,\ldots \lfloor(g-1)/2\rfloor$. Cornalba gives also the first Chern class
$\lambda$ of the Hodge bundle ${\EE}$ on $\overline{\mathcal{H}}_g$
$$
(8g+4)\, \lambda= g \, \delta_0+ 4 \sum_{i=1}^{\lfloor g/2 \rfloor} i(g-i)\, \delta_i
+ 2 \, \sum_{i=1}^{\lfloor (g-1)/2 \rfloor} (i+1)(g-i)\, \zeta_i \, ,
$$
where the generic point of the divisor $\zeta_i$ has an admissible model $C' \cup C^{\prime\prime}$ 
with two nodes $C' \cap C^{\prime\prime}=\{ p,q\}$ 
mapping to a union of two ${\PP}^1$, with $2i+2$ marked
points on $C'$, see Figure 1
in Section \ref{Hurwitz}.
\end{section}
\begin{section}{Modular forms on the hyperelliptic locus of genus three} \label{hyperellg3}
Let ${\EE}$ be the Hodge bundle on $\overline{\mathcal{H}}_3$. 
By a modular form of weight $k$ on $\overline{\mathcal{H}}_3$ 
we mean a section of $\det({\EE})^{\otimes k}$.
The construction in the preceding section shows that a modular form of weight $k$
on $\overline{\mathcal{H}}_3$ when pulled back to the
stack $[W^0_{2,0}/({\rm GL}(W)/\pm{\rm id}_W)]$ gives rise to an
invariant of degree $3k/2$. Indeed, it defines a section of the equivariant
bundle $\det(W)^{3k}$ invariant under ${\rm SL}(W)$, but
in view of the fact that we divide
by the action of ${\rm GL}(W)/(\pm {\rm id}_W)$ this yields an invariant of degree $3k/2$.

Let $M_k(\Gamma_3)=H^0(\mathcal{A}_3,\det({\EE})^k)$ be the space of Siegel modular
forms of degree $3$ on $\Gamma_3={\rm Sp}(6,{\ZZ})$.
In \cite{Igusa1967} Igusa considered an exact sequence
$$
0 \to M_{k-18}(\Gamma_3) \langepijl{\cdot \chi_{18}} M_k(\Gamma_3) \to I_{3k/2}(2,8) 
$$
with $I_{d}(2,8)$ the vector space of invariants of degree $d$ of binary octics.
We can interpret Igusa's sequence in the following way. A Siegel modular form of weight $k$
defines by restriction to the hyperelliptic locus a modular form of weight $k$
on $\overline{\mathcal{H}}_3$ and it thus defines an invariant of degree $3k/2$.

For each irreducible representation $\rho$ of ${\rm GL}(3)$
we have a vector bundle ${\EE}_{\rho}$ made from ${\EE}$ by a Schur functor.
By a modular form of weight $\rho$ on  $\overline{\mathcal{H}}_3$ we mean
a section of a vector bundle ${\EE}_{\rho}$.
We can pull back to the stack $[W^0_{8,-2}/({\rm GL}(W)/\pm{\rm id}_W)]$,
but the situation is more involved as ${\rm Sym}^n({\rm Sym}^2(W))$
decomposes as a representation of ${\rm GL}(W)$. For example, we have
with $W_{a,b}={\rm Sym}^a(W)\otimes \det(W)^b$
$$
h^*({\rm Sym}^4({\EE}))= {\rm Sym}^4({\rm Sym}^2(W))=W_{8,0}\oplus W_{4,2} \oplus W_{0,4}\, .
$$
Here and in the rest of this section we assume that the characteristic is $0$, or not $2$,
and high enough for the representation theory (plethysm) to work\footnote{Alternatively one could 
use divided powers as in \cite[3.1]{AFPRW}}.
In this case we can consider the restriction of the Siegel modular form $\chi_{4,0,8}$ to the hyperelliptic
locus and we know that it does not vanish identically by \cite[Lemma 7.7]{CFG2}. 
On the other hand we have the basic covariant
$f_{8,-2}$, the diagonal section of $W_{8,-2}$ over the stack $[W_{8,-2}^0/({\rm GL}(W)/\pm {\rm id}_W)]$.

The discriminant form $\mathfrak{d}$ of binary octics, an invariant of degree $14$, 
does not define a modular form, but its third power $\mathfrak{d}^3$ does. It defines a modular
form of weight $28$, see \cite[p.\ 811]{Tsuyumine1986} and also Remark \ref{weight14}.

Via the projection $p_{8,0}: {\rm Sym}^4({\rm Sym}^2(W))\to W_{8,0}$ a section of ${\rm Sym}^4({\EE}) \otimes
\det({\EE})^8$ defines a covariant of bi-degree $(8,24/2)=(8,12)$ for the action of ${\rm GL}(W)$.

\begin{proposition}\label{binary_octic}
The restriction to the hyperelliptic locus of the section $\chi_{4,0,8}$ corresponds via the projection
$p_{8,0}$ to a multiple of the covariant $f_{8,-2}\cdot \mathfrak{d}$ with
$\mathfrak{d}$ the discriminant of binary octics.
\end{proposition}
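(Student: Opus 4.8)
The idea is to identify $\chi_{4,0,8}$ restricted to the hyperelliptic locus with an explicit covariant by comparing two things: the geometric description of the branch divisor $\overline D\subset{\PP}({\EE})$ from Corollary~\ref{classofAII}, and the stack-quotient picture of Section~\ref{hyperell-stack}. First I would recall that $\chi_{4,0,8}$ arises as $u_*(s)$ where $s$ is the section $1$ of $\mathcal{O}(\overline D)$, and $\overline D$ is the closure of the branch locus of the canonical map $\varphi:\mathcal C\to{\PP}({\EE})$. On the hyperelliptic locus the canonical map is \emph{not} an embedding: it factors as $C\xrightarrow{\alpha}{\PP}^1\hookrightarrow{\PP}^2$, the composite of the degree-$2$ map $\alpha$ with the Veronese-type embedding of the $g^1_2$. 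So the canonical image of a hyperelliptic genus-$3$ curve is a conic (with multiplicity $2$), and the branch divisor of $\varphi$ along that conic is supported on the $2g+2=8$ Weierstrass points, i.e.\ exactly the $8$ branch points of $\alpha$, each appearing on the conic.

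Next I would translate this into the language of $W$. Under $h:[W^0_{8,-2}/({\rm GL}(W)/\pm{\rm id}_W)]\xrightarrow{\sim}\mathcal{H}_3$ we have $h^*({\EE})\cong W_{2,0}={\rm Sym}^2(W)$ (the $g$-odd formula with $g=3$ gives $W_{g-1,(3-g)/2}=W_{2,0}$), and the universal binary octic is the diagonal section $f_{8,-2}$ of $W_{8,-2}$. The point is that on the hyperelliptic locus the relevant "branch divisor" section in $u_*\mathcal{O}(\overline D)$ should be, up to the $W_{8,0}$-component and normalization, the product of the $8$ linear forms cutting out the Weierstrass points on the conic ${\PP}(W)\cong{\PP}({\rm Sym}^2 W)$ — but that product is exactly $f_{8,-2}$ as a section of ${\rm Sym}^8(W)\otimes(\text{twist})$, living naturally inside $h^*({\rm Sym}^4({\EE}))=W_{8,0}\oplus W_{4,2}\oplus W_{0,4}$ via its image under $p_{8,0}$. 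The weight bookkeeping must then be checked: $\chi_{4,0,8}$ is a section of ${\rm Sym}^4({\EE})\otimes\det({\EE})^8$, and $h^*(\det{\EE})=\det(W)^3$, so $h^*(\chi_{4,0,8})$ lands in ${\rm Sym}^4({\rm Sym}^2 W)\otimes\det(W)^{24}$; projecting by $p_{8,0}$ gives a section of $W_{8,0}\otimes\det(W)^{24}=W_{8,24}$, which is a covariant of bi-degree $(8,12)$ as stated in the line preceding the proposition. The candidate $f_{8,-2}\cdot\mathfrak d$ sits in $W_{8,-2}\otimes(\det W)^{?}$ coming from $\mathfrak d\in I_{14}$: one checks $\mathfrak d$ has the degree making the total a bi-degree-$(8,12)$ covariant, which it does since $\mathfrak d$ is degree $14$ in the coefficients, contributing $\det^{14}$ appropriately after the twist, and $f_{8,-2}$ supplies $\det^{-2}$, matching $24$. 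I would verify this arithmetic carefully rather than wave at it.

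Finally, to pin down that the covariant really is a \emph{nonzero multiple} of $f_{8,-2}\cdot\mathfrak d$ rather than some other covariant of the same bi-degree, I would argue by dimension of the covariant space together with the known non-vanishing of $\chi_{4,0,8}$ on the hyperelliptic locus (\cite[Lemma 7.7]{CFG2}). The key structural input is that the space of covariants of binary octics of bi-degree $(8,12)$ is spanned by $f_{8,-2}\cdot\mathfrak d$ together with covariants divisible by lower invariants or by other basic covariants; but the geometric description forces $h^*(\chi_{4,0,8})|_{W_{8,0}}$ to vanish precisely on the locus where the $8$ Weierstrass points degenerate — that is, on the discriminant — with the residual factor being the universal binary form itself (the conic's defining data). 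Since $f_{8,-2}$ is the \emph{only} covariant whose value is literally the binary octic, and $\mathfrak d$ is the \emph{only} invariant whose zero locus is the full discriminant with the right multiplicity, the product is forced up to scalar, and \cite[Lemma 7.7]{CFG2} rules out the scalar being $0$.

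\textbf{Main obstacle.} The delicate step is not the geometry — that $\chi_{4,0,8}$ restricted to hyperelliptic curves "sees" the Weierstrass points is fairly transparent — but rather the representation-theoretic bookkeeping: showing that after the plethysm decomposition $h^*({\rm Sym}^4{\EE})=W_{8,0}\oplus W_{4,2}\oplus W_{0,4}$ the $W_{8,0}$-component of the pulled-back section is \emph{exactly} the product of the eight Weierstrass linear forms (and not that product times some ${\rm SL}_2$-invariant, or some other combination forced by equivariance), together with matching all the determinant twists. In particular one must be careful that the branch-divisor section, which a priori is a product of eight sections of ${\EE}=W_{2,0}$ rather than of $W_{1,\ast}$, projects correctly: the eight points on the conic ${\PP}(W_{2,0})={\PP}^1$ (under the $2$-uple identification) pull back to eight points on ${\PP}(W)$, and the product of their linear forms on ${\PP}(W)$, re-expressed as a degree-$8$ form, is $f_{8,-2}$ — this re-expression is where an unwanted Jacobian/Hessian-type factor could in principle appear, and checking it does not is the crux.
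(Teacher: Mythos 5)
Your proposal rests on a geometric premise that is not the one the paper uses, and at the crucial point it asserts exactly what has to be proved. First, for genus $3$ the divisor $\overline{D}$ defining $\chi_{4,0,8}$ is the closure of the \emph{canonical image} of the universal curve (the universal quartic in ${\PP}({\EE})$), not the branch locus of the canonical map; you have imported the genus-$2$ construction. Second, and more seriously, over the hyperelliptic locus the fibrewise restriction of $\overline{D}$ is the \emph{double conic} $2Q$ (the paper says this explicitly right after the proposition), so at a hyperelliptic point the value of the restricted section is proportional to $q^2$, where $q$ is the ${\rm SL}(W)$-invariant conic; that value lies in the $W_{0,4}$-summand of ${\rm Sym}^4({\rm Sym}^2W)$, and the eight Weierstrass points are nowhere visible in the naive fibrewise zero divisor. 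Hence your central claim, that the $W_{8,0}$-component of the pulled-back section ``is exactly the product of the eight Weierstrass linear forms,'' is not justified by the geometry you describe --- and note that this product is $f_{8,-2}$ alone, whereas the statement asserts $f_{8,-2}\cdot\mathfrak{d}$. Indeed your ``main obstacle'' paragraph (ruling out an extra invariant factor) argues \emph{against} the statement you are trying to prove, while your final paragraph assumes the factor $\mathfrak{d}$ is there; the proposal is internally inconsistent on precisely the point at issue. Calling the appearance of the Weierstrass points ``fairly transparent'' hides the actual difficulty: the octic only enters after the covariant-theoretic analysis (or, geometrically, at first order transverse to the hyperelliptic divisor), not through the restriction of the divisor itself.

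The paper's proof is a short invariant-theoretic argument of a different shape: the restricted and projected section is \emph{some} covariant of the prescribed order and twist; it is divisible by the discriminant and does not vanish on the locus of octics with nonzero discriminant; dividing by $\mathfrak{d}$ therefore yields a nowhere-vanishing covariant whose order $8$ and determinant twist force it to be (degree one in the coefficients, hence) a multiple of the universal octic $f_{8,-2}$. Your proposal supplies neither input of that argument: you give no reason for the divisibility by $\mathfrak{d}$ (you assert vanishing ``where the Weierstrass points degenerate'' from ``the geometric description''), and you replace the decisive degree/twist count by an unproven structural claim about the space of covariants of bidegree $(8,12)$, which is large and is not spanned in the way you suggest. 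Finally, the weight bookkeeping you sketch does not close as written: $\det^{14}$ from $\mathfrak{d}$ plus $\det^{-2}$ from $f_{8,-2}$ gives $12$, not the $24$ you claim to match, and under the twisted action on $W_{8,-2}$ the degree-$14$ invariant in fact transforms by an even higher power of $\det$; getting these twists to agree is exactly the careful bookkeeping the proof requires, so it cannot be left as a promissory note.
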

\begin{proof}
By restricting and projecting we obtain a covariant of bi-degree $(8,12)$. This covariant is divisible
by the discriminant and does not vanish on the locus of smooth hyperelliptic curves. Therefore,
division by $\mathfrak{d}$ 
gives a non-vanishing covariant of bidegree $(8,-2)$. Taking into account the
`twisting' by $\det(W)^{-2}$, this must be a multiple of the universal binary octic.
\end{proof}

We will discuss the other two projections later in Lemma \ref{3summands}.
Note that the divisor $D$, the canonical image of the universal curve in ${\PP}({\EE})$
that defines $\chi_{4,0,8}$, has a restriction to the locus of smooth hyperelliptic curves
which is divisible by $2$. Indeed, the canonical image of a hyperelliptic curve is a double
conic. This suggests that we can take the `square root' of the restriction of $\chi_{4,0,8}$
to the hyperelliptic locus. However, the boundary divisors prevent this. If we take a level
cover of the moduli space we can construct a modular form of weight $(2,0,4)$. 
We will carry this out later (in Corollary \ref{chi204}), working on a Hurwitz space
that we shall introduce in the next section.
\end{section}
\begin{section}{The Hurwitz space of admissible covers of degree two} \label{Hurwitz}
In this and the following sections will use the other description of the moduli
of hyperelliptic curves, namely the moduli space $\overline{\mathcal{H}}_{g,2}$ of admissible covers
of degree $2$ and genus $g$ in the sense of \cite{HMu}, see \cite{HMo}. Thus we are
looking at covers
$f: C \to P$ of degree $2$ with $C$ nodal
of genus $g$ and $P$ a stable $b$-pointed curve of genus $0$.
Here the $b=2g+2$ branch points are ordered and $\mathcal{H}_{g,2}\to \mathcal{H}_g$
is a Galois cover with Galois group the symmetric group $\mathfrak{S}_{2g+2}$.

The boundary $\overline{\mathcal{H}}_{g,2}-\mathcal{H}_{g,2}$ consists of finitely
many divisors that we shall denote by $\Delta_b^{\Lambda}=\Delta^{\Lambda}$, where we omit
the index $b$ if $g$ is clear. Here the index $\Lambda$ defines
a partition
$ \{1,2,\ldots,b\}=\Lambda\sqcup \Lambda^c$,
and the generic point of $\Delta^{\Lambda}$ corresponds to an admissible
cover that maps to a stable curve of genus $0$ that is
the union of two copies of ${\PP}^1$, one containing the
points with mark in $\Lambda$, the other one those with mark in $\Lambda^c$.
Here we will assume that $\# \Lambda=j$ with $2 \leq j \leq g+1$.

The parity of $\# \Lambda$ plays an important role here.
If $\# \Lambda=2i+2$ is even, then the generic admissible cover corresponding
to a point of $\Delta^{\Lambda}$ is a union  $C_i \cup C_{g-i-1}$ that is a 
double cover of a union of
two rational curves ${\PP}_1$ and ${\PP}_2$ with $C_i$ lying over ${\PP}_1$ and
$C_{g-i-1}$ over ${\PP}_2$.
Here $C_i$ (resp.\ $C_{g-i-1}$) has genus $i$ (resp.\ $g-i-1$) with
$0 \leq i \leq (g-1)/2$ and is ramified over the points of $\Lambda$
(resp.\ $\Lambda^c$).

\begin{center}
\begin{pspicture}(-3,-2)(3,3)
\psline[linecolor=red](-0.6,-1.2)(3,0) 
\psline[linecolor=blue](-3,0)(0.6,-1.2) 
\pscurve[linecolor=blue](-3,1.8)(0,1)(0.5,0.4)(-0.6,0) 
\pscurve[linecolor=red](3,1.8)(0,1)(-0.5,0.4)(0.6,0) 
\psline{->}(0,-0.1)(0,-0.6)
\rput(-3.5,2){$C_i$}
\rput(3.5,2){$C_{g-i-1}$}
\rput(-3.5,0){${\PP}_1$}
\rput(3.5,0){${\PP}_2$}
\rput(0,1.3){$p$}
\rput(0,0.3){$q$}
\rput(0,-1.2){$s$}
\rput(0,-2){\tt Fig.\ 1: $\Lambda$ even}
\end{pspicture}
\end{center}

If $\# \Lambda=2i+1$ is odd with $1\leq i \leq g/2$, then we have a union 
$C_i \cup C_{g-i}$ lying over ${\PP}_1 \cup {\PP}_2$, where $C_i$ (resp.\
$C_{g-i}$) of genus $i$ (resp.\ $g-i$) is ramified over $\Lambda$ and in $p$,
the intersection of $C_i$ and $C_{g-i}$, (resp.\
over $\Lambda^c$ and in $p$). Note that $p$ is a simple node.

\begin{center}
\begin{pspicture}(-3,-2)(3,3)
\psline[linecolor=red](-0.6,-1.2)(3,0) 
\psline[linecolor=blue](-3,0)(0.6,-1.2) 
\pscurve[linecolor=blue](-3,1.8)(0,1)(-1,-0.3) 
\pscurve[linecolor=red](3,1.8)(0,1)(1,0.9) 
\psline{->}(0,-0.1)(0,-0.6)
\rput(-3.5,2){$C_i$}
\rput(3.5,2){$C_{g-i}$}
\rput(-3.5,0){${\PP}_1$}
\rput(3.5,0){${\PP}_2$}
\rput(-0.4,1.0){$p$}
\rput(0,-1.2){$s$}
\rput(0,-2){\tt Fig.\ 2: $\Lambda$ odd}
\end{pspicture}
\end{center}

Assuming that $g$ and $b=2g+2$ are fixed we will write
$$
\Delta_{j}=\sum_{\# \Lambda=j} 
\Delta^{\Lambda} \qquad \text{\rm with $2\leq j < b/2$}
$$
and provide the symmetric case with a factor $1/2$, that is,
$\Delta_{b/2}= \frac{1}{2} \sum_{\#\Lambda=b/2} \Delta^{\Lambda}$.

\end{section}
\begin{section}{Divisors on the moduli of stable curves of genus zero}
For later use we recall some notation and facts concerning divisors on
the moduli spaces $\overline{\mathcal{M}}_{0,n}$. We refer to \cite{Ke}.
The boundary divisors on  $\overline{\mathcal{M}}_{0,n}$ are denoted by
$S_n^{\Lambda}$ and are indexed by partitions $\{1,\ldots,n\}=\Lambda\sqcup \Lambda^c$
into two disjoint sets with $2\leq \# \Lambda \leq n-2$ and we have $S_{n}^{\Lambda}=S_n^{\Lambda^c}$.
Via the natural map
$\pi_{n+1}:  \overline{\mathcal{M}}_{0,n+1} \to  \overline{\mathcal{M}}_{0,n}$
we may view  $\overline{\mathcal{M}}_{0,n+1}$ as the universal curve
and $\pi_{n+1}$ has $n$ sections. The generic point of $S_{n}^{\Lambda}$
corresponds to a stable curve with two rational components, one of which
contains the points marked by $\Lambda$. For pullback by $\pi_{n+1}$ we have the
relation
$$
\pi_{n+1}^{*}(S_n^{\Lambda})= S_{n+1}^{\{\Lambda,n+1\}} \cup S_{n+1}^{\{\Lambda^c,n+1\}} \, .
$$
The $n$ sections of $\pi_{n+1}$
have images $S_{n+1}^{\{i,n+1\}}$ with $i=1,\ldots,n$.

We can collect these boundary divisors on $\overline{\mathcal{M}}_{0,n+1}$ via
$$
T_{n+1,j}= \sum_{\#\Lambda=j} S_{n+1}^{\{\Lambda,n+1\}}, \quad
T_{n+1,j}^c= \sum_{\#\Lambda=j} S_{n+1}^{\{\Lambda^c,n+1\}} \, ,
$$
with the convention that in view of the symmetry we add a factor $1/2$ for even $n$ and $j=n/2$
$$
T_{n+1,n/2}= \frac{1}{2}\sum_{\#\Lambda=n/2} S_{n+1}^{\{\Lambda,n+1\}}, \quad
T_{n+1,n/2}^c= \frac{1}{2} \sum_{\#\Lambda=n/2} S_{n+1}^{\{\Lambda^c,n+1\}} \, .
$$
Later, when a fixed index $k$ is given we will split these divisors as
$T=T(k^{+}) +T(k^{-})$ where $(k^{+})$ (resp.\ $(k^{-})$) indicates that the
sum is taken over $\Lambda$ containing $k$ (resp.\ not containing $k$).
So $T_{n+1,j}(k^{+})= \sum_{\#\Lambda=j, k \in \Lambda} S_{n+1}^{\{\Lambda,n+1\}}$ (and with a factor $1/2$ if $j=n/2$).
\end{section}

\begin{section}{A good model}
We now will work with a `good model' of the universal admissible cover
over $\overline{\mathcal{H}}_{g,2}$. Such a model was constructed in
\cite[Section 4]{vdG-K2}.  We start with the observation
that the space $\overline{\mathcal{H}}_{g,2}$
is not normal, and we therefore normalize it. The result
$\widetilde{\mathcal{H}}_{g,2}$ is now a smooth stack over which we have
a universal curve $\tilde{\mathcal{C}} \to \widetilde{\mathcal{H}}_{g,2}$.

We have a natural map
$h: \widetilde{\mathcal{H}}_{g,2} \to \overline{\mathcal{M}}_{0,b}$
with $b=2g+2$
and the universal curve now fits into a commutative diagram
$$
\begin{xy}
\xymatrix{
&{\tC} \ar[r]^c \ar[d]_{\varpi}& {\bM}_{0,b+1}\ar[d]^{\pi_{b+1}} \\
\overline{\mathcal{H}}_{g,2} & {\tH}_{g,2}\ar[l]_{\nu} \ar[r]^{h} & {\bM}_{0,b} \\
}
\end{xy}
$$
We can construct a proper flat map that extends the relative
canonical morphism
$\C \to {\PP}^1_{{\H}_{g,2}}$
by taking the fibre product ${\PP}$ of ${\bM}_{0,b+1}$ and ${\tH}_{g,2}$
over ${\bM}_{0,b}$ and thus obtain a commutative diagram
$$
\begin{xy}
\xymatrix{
{\tC} \ar[r]^{\alpha} \ar[rd]_{\varpi}& {\PP}\ar[d]^{\varpi'} 
\ar[r]^{c'} &{\bM}_{0,b+1}\ar[d]^{\pi_{b+1}} \\
& {\tH}_{g,2} \ar[r]^{h} & {\bM}_{0,b} \\
}
\end{xy}
$$
The resulting space ${\PP}$ is not smooth, but has rational singularities.
Resolving these in a minimal way gives a model $\widetilde{\PP}$;
taking the resolution $\widetilde{Y}$ of the normalization $Y$ of the
fibre product of ${\tP}$ and $\tC$ over ${\PP}$
gives us finally a commutative diagram
$$
\begin{xy}
\xymatrix{
\widetilde{Y} \ar[r]^f \ar[rd]_t & \widetilde{\PP} \ar[d]^{\pi}\\
& B \ar[r]^{\beta} & \widetilde{\mathcal{H}}_{g,2}\\
}
\end{xy}  \eqno(3)
$$
where $B$ is our base $\widetilde{\mathcal{H}}_{g,2}$ or any other base
mapping to it. We write $\pi$ for the resulting morphism $\widetilde{\PP}
\to B$, $h$ for the natural map $B \to \overline{\mathcal{M}}_{0,b}$
and $\nu$ for $B \to \overline{\mathcal{H}}_{g,2}$.
We refer to \cite[Section 4]{vdG-K2} for additional details.

In the following we will assume that we have a physical family over a
base $B$. We will abuse the notation $\Delta^{\Lambda}$ for the pull
back of the divisor $\Delta^{\Lambda}$
under $\nu: B \to \overline{\mathcal{H}}_{g,2}$.

In the case that $\# \Lambda$ is even, say $\#\Lambda=2i+2$ with $0 \leq i \leq (g-1)/2$,
the pull back of $\Delta^{\Lambda}$ decomposes as
$$
\pi^*(\Delta^{\Lambda})=\Pi^{ \Lambda}+ \Pi^{\Lambda^c}\, ,
$$
corresponding to the two components of a general fibre
of $\pi$, with $\Pi^{\Lambda}$ mapping to $S_{b+1}^{\{ \Lambda,b+1\}}$
under $\widetilde{\PP} \to \overline{\mathcal{M}}_{0,b+1}$,
and similarly $\Pi^{\Lambda^c}$ mapping to $S_{b+1}^{\{ \Lambda^c,b+1\}}$.
Note that we restrict $\#\Lambda$ by $\leq g+1$, hence the notation
$\Pi^{\Lambda}$ should not lead to confusion.

In the case $\# \Lambda$ is odd we find a similar decomposition
$$
\pi^*(\Delta^{\Lambda})=\Pi^{ \Lambda}+ R^{\Lambda} +\Pi^{\Lambda^c}\, ,
$$
corresponding now to the fact that the general fibre of $\pi$ has three components,
one coming from the blowing up.

We notice
$$
h^*(S_b^{\Lambda})= \begin{cases} 
\Delta^{\Lambda} & \# \Lambda \equiv 0\, (\bmod\, 2) \\
2\, \Delta^{\Lambda} & \# \Lambda \equiv 1\,  (\bmod \, 2) \\ \end{cases} 
$$

If we use the notation $\Delta_j= \sum_{\# \Lambda=j} \Delta^{\Lambda}$, 
we find for the tautological classes $\lambda=c_1({\EE})$ and 
$h^*(\psi_k)$, simply denoted
by $\psi_k$ and defined as the first Chern class of the line bundle
given by the cotangent space at the $k$th point 
of our pointed curve ($k=1,\ldots,b$), 
the following formulas  on our base $B$ (see \cite{vdG-K1})
$$
\lambda= \sum_{i=0}^{(g-1)/2} \frac{(i+1)(g-i)}{2(2g+1)} \Delta_{2i+2} + 
\sum_{i=1}^{g/2} \frac{i(g-i)}{2g+1} \Delta_{2i+1}
\eqno(4)
$$
and
$$
\begin{aligned}
\psi_k= \sum_{i=0}^{(g-1)/2} \left( \frac{(g-i)(2g-2i-1)}{g(2g+1)}
\Delta_{2i+2}(k^{+}) +
\frac{(i+1)(2i+1)}{g(2g+1)} \Delta_{2i+2}(k^{-}) \right) & \\
+2 \, \sum_{i=1}^{g/2} \left( \frac{(g-i)(2g-2i+1)}{g(2g+1)} \Delta_{2i+1}(k^{+}) +
 \frac{i(2i+1)}{g(2g+1)} \Delta_{2i+1}(k^{-}) \right) & \\
\end{aligned}
\eqno(5)
$$
where we use the notation $(k^{+})$ (resp.\ $(k^{-})$)
to denote the condition $k\in \Lambda$ (resp.\ $k \not\in \Lambda$)
as above. The relation (4) implies the following.

\begin{corollary}
There exists a scalar-valued modular form
of weight $2(2g+1)$ on the moduli space $\widetilde{\mathcal{H}}_{g,2}$
whose divisor is a union of boundary divisors. 
It descends to the hyperelliptic locus
$\overline{\mathcal{H}}_g$ and corresponds to a power of the discriminant
of the binary form of degree $2g+2$.
\end{corollary}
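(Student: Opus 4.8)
The plan is to read off the modular form directly from relation (4). Relation (4) expresses $\lambda = c_1(\mathbb{E})$ as an explicit $\mathbb{Q}$-linear combination of boundary classes $\Delta_{2i+2}$ and $\Delta_{2i+1}$ on $B$. Clearing denominators, $2(2g+1)\lambda$ equals an integral combination of boundary divisors, namely
\[
2(2g+1)\,\lambda = \sum_{i=0}^{(g-1)/2} (i+1)(g-i)\, \Delta_{2i+2} + 2\sum_{i=1}^{g/2} i(g-i)\, \Delta_{2i+1},
\]
with all coefficients nonnegative integers. Hence the line bundle $\det(\mathbb{E})^{\otimes 2(2g+1)}$ is isomorphic to $\mathcal{O}_B$ of an effective boundary divisor, and the tautological section $1$ of that divisor, transported under this isomorphism, is a section of $\det(\mathbb{E})^{\otimes 2(2g+1)}$ — that is, a scalar-valued modular form of weight $2(2g+1)$ on $\widetilde{\mathcal{H}}_{g,2}$ whose divisor is that union of boundary divisors. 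This already gives the first sentence of the corollary.

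\textbf{Next I would address descent to $\overline{\mathcal{H}}_g$.} The form just constructed lives on $\widetilde{\mathcal{H}}_{g,2}$, which maps to $\overline{\mathcal{H}}_g$ via $\nu$ (factoring through the normalization and the $\mathfrak{S}_{2g+2}$-cover). A weight $2(2g+1)$ form descends provided it is invariant under the Galois group $\mathfrak{S}_{2g+2}$ of branch-point permutations. The divisor $\sum_{\#\Lambda=j}\Delta^{\Lambda}$ for each fixed $j$ is manifestly $\mathfrak{S}_{2g+2}$-symmetric, so the whole boundary combination is Galois-invariant as a divisor; one then checks that the section itself (not just its divisor) descends — this uses that $\mathrm{Pic}$ of $\overline{\mathcal{H}}_g$ is generated by the $\delta_i,\zeta_j$ (Cornalba), together with Cornalba's formula for $(8g+4)\lambda$ quoted in the previous section. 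In fact the cleanest route is to match $2(2g+1)\lambda$ on $\widetilde{\mathcal{H}}_{g,2}$ with (one quarter of) Cornalba's relation $(8g+4)\lambda = g\,\delta_0 + 4\sum i(g-i)\delta_i + 2\sum (i+1)(g-i)\zeta_i$ pulled back along $\nu$, using the comparison $h^*(S_b^\Lambda) = \Delta^\Lambda$ or $2\Delta^\Lambda$ according to the parity of $\#\Lambda$, and the identification of the $\Delta_{2i+2}$ with $\zeta_i$ and the $\Delta_{2i+1}$, $\Delta_0$-type contributions with $\delta_i$. This shows the divisor on $\widetilde{\mathcal{H}}_{g,2}$ is a pullback from $\overline{\mathcal{H}}_g$, hence so is the form up to scalar.

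\textbf{Finally I would identify the form on the hyperelliptic locus with a power of the discriminant.} Under the stack-quotient description of Section \ref{hyperell-stack}, a weight $k$ modular form on $\overline{\mathcal{H}}_g$ pulls back to an $\mathrm{SL}(W)$-invariant section of a power of $\det(W)$, i.e.\ an invariant of the binary form $f$ of degree $2g+2$, of degree determined by the formulas $h^*(\det\mathbb{E}) = \det(W)^{g/2}$ (resp.\ $\det(W)^g$) for $g$ even (resp.\ odd). A weight $2(2g+1)$ form whose divisor is supported entirely on the boundary must, on $\mathcal{H}_g$ itself, be nowhere vanishing; the only invariants of binary forms of degree $2g+2$ that are nowhere zero on the locus of forms with distinct roots are the powers of the discriminant $\mathfrak{d}$ (up to scalar). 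Matching degrees — $\mathfrak{d}$ has degree $2(2g+2-1) = 2(2g+1)$ in the coefficients of a binary form of degree $2g+2$, and this matches the weight count — pins the form down to a nonzero multiple of $\mathfrak{d}$. \textbf{The main obstacle} is the descent step: one must verify not merely that the boundary divisor is $\mathfrak{S}_{2g+2}$-symmetric but that the section descends through the partial compactification, which requires care about the non-normality of $\overline{\mathcal{H}}_{g,2}$ and the precise ramification of $\nu$ along the boundary — this is exactly the bookkeeping behind comparing relation (4) with Cornalba's relation, and getting the ramification factors of $2$ (from $h^*S_b^\Lambda$ for odd $\#\Lambda$) consistent is where the genuine work lies.
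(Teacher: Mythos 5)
Your first step is precisely the paper's argument (the corollary is stated there as an immediate consequence of relation (4)): clearing denominators gives $2(2g+1)\lambda=\sum_{i}(i+1)(g-i)\,\Delta_{2i+2}+2\sum_{i}i(g-i)\,\Delta_{2i+1}$, an effective integral combination of boundary divisors, hence a section of $\det(\mathbb{E})^{\otimes 2(2g+1)}$ on $\widetilde{\mathcal{H}}_{g,2}$ whose divisor is supported on the boundary. Up to the usual caveat that one passes from a relation in $\mathrm{Pic}_{\mathbb{Q}}$ to an actual section (a step the paper also takes silently), this part is fine.

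The two elaborations you add, however, contain genuine errors. First, the identification with the discriminant: you pin the form down to a multiple of $\mathfrak{d}$ itself by asserting that $\deg\mathfrak{d}=2(2g+1)$ ``matches the weight count''. It does not: by the dictionary of Section \ref{hyperell-stack} ($h^*(\det\mathbb{E})=\det(W)^{g/2}$ for $g$ even, $\det(W)^{g}$ for $g$ odd) a form of weight $k$ corresponds to an invariant of degree $gk/2$ (the paper records $3k/2$ for $g=3$), so weight $2(2g+1)$ corresponds to degree $g(2g+1)$, i.e.\ to $\mathfrak{d}^{g/2}$ --- which is why the corollary says ``a power of the discriminant''; it is $\mathfrak{d}$ itself only for $g=2$. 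Second, the descent step: the relevant multiple of Cornalba's relation is one half, not one quarter, and half of $(8g+4)\lambda=g\,\delta_0+4\sum i(g-i)\delta_i+2\sum(i+1)(g-i)\zeta_i$ carries the coefficient $g/2$ on $\delta_0$. Since the cover is ramified along the boundary (one has $\nu^*\delta_0=2\,\Delta_2$, while $\nu^*\delta_i=\Delta_{2i+1}$ and $\nu^*\zeta_i=\Delta_{2i+2}$, as one sees by matching (4) against Cornalba's formula), $\mathfrak{S}_{2g+2}$-invariance of the divisor does not make it a pullback: the coefficient $g$ of $\Delta_2$ in the cleared relation must be even for that, so your ``cleanest route'' actually shows that for odd $g$ the weight-$(4g+2)$ form does \emph{not} descend to $\overline{\mathcal{H}}_g$ with boundary divisor --- exactly the phenomenon recorded in Remark \ref{weight14} for $g=3$, where there is no weight-$14$ form on $\overline{\mathcal{H}}_3$ whose square is $\chi_{28}$, and where the corresponding invariant-theoretic object is the half-integral power $\mathfrak{d}^{3/2}$. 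So the descent claim has to be understood in this weaker sense (via $\mathfrak{d}^{g/2}$, honestly only for $g$ even, or after squaring), and your argument as written would prove a false statement for odd $g$.
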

\end{section}
\begin{section}{Extending the linear system} \label{extending}
The canonical system on a hyperelliptic curve is defined by
the pull back of the sections of the line bundle of
$\mathcal{O}(g-1)$ of degree $g-1$ on the
projective line.
We now try to extend this line bundle over our compactification.

A first attempt would be to consider the divisor
$(g-1)\, \tilde{S}_k$  with $\tilde{S}_k$ the pullback to $\widetilde{\PP}$ of the section
$S_k$ of $\pi_{b+1}: \overline{\mathcal{M}}_{0,b+1}\to 
\overline{\mathcal{M}}_{0,b}$.  Recall the morphism 
$t=\pi f \colon \widetilde{Y} \to B$. We can add a boundary divisor $\Xi_k$ to it such that
$f^*\mathcal{O}_{\widetilde{\PP}}(D_k)$ with $D_k=(g-1)\tilde{S}_k+\Xi_k$
coincides with $\omega_{t}$ on the fibres of $t$, 
namely in view of the intersection numbers take $\Xi_k$ equal to
$$
\begin{aligned}
&\sum_{i=0}^{(g-1)/2} \left(
(g-1-i)\, \Pi_{2i+2}(k^{+}) + i\, \Pi_{2i+2}^c(k^{-})
\right)+ \\
&\sum_{i=1}^{g/2} \left(
(g-i-1)\, \Pi_{2i+1}(k^{+})
- (g-i) \, \Pi^c_{2i+1}(k^{+})
\right)
+  \sum_{i=1}^{g/2} \left(
(i-1)\, \Pi_{2i+1}^c(k^{-}) 
- i\, \Pi_{2i+1}(k^{-})
\right) \\
\end{aligned}
$$
Here $\Pi_j= \sum_{\# \Lambda =j} \Pi^{\Lambda}$ and $\Pi_j^c=\sum_{\# \Lambda=j} \Pi^{\Lambda^c}$
and $(k^{+})$ (resp.\ $(k^{-})$) indicates the condition that $k\in \Lambda$ (resp.\
$k \not\in \Lambda$); moreover, we add a factor $1/2$ in case $j=b/2$.

Now $f^*\mathcal{O}(D_k)$ and $\omega_t$ agree on the fibres of $t$,
so they differ by a pull back under $t=\pi \circ f$, see diagram (2).

To see the above, when e.g.\ $\# \Lambda=2i+2$ is even: 
in that case the fibre of $\tilde{Y}$ over $t$ is as in Fig.\ 1 and 
$\omega_C=(\omega_{C_i}+p+q, \omega_{C_{g-i-1}}+p+q)=(i(p+q), (g-i-1)(p+q))=f^*(i, g-i-1)$, 
where we indicate by $i$ the line bundle of degree $i$ on ${\mathbb P}^1$.
One then checks that with the above choice of 
$\Xi_k$ the restriction of $D_k$ on the corresponding fibre of $\pi$ 
is of type $(i,g-1-i)$. Indeed,
in case $k\in \Lambda$ then $(g-1)\tilde{S}_k+(g-1-i)\Pi_{2i+2}(k^+)$ 
restricts to $((g-1)- (g-i-1), g-i-1)=(i, g-i-1)$ and
in case $k\notin \Lambda$ then $(g-1)\tilde{S}_k+i\Pi^c_{2i+2}(k^-)$ restricts to $(i,(g- 1)-i)=(i,g-i-1)$.
The case where $\# \Lambda =2i+1$ is odd, although a little more complicated, is treated similarly.

We therefore will change $D_k$ by a pull back under $\pi$.
Define  a divisor class on $B$ by
$$
\begin{aligned}
E_k= \frac{2g-1}{2}\psi_k & - \sum_{i=0}^{(g-1)/2} 
\left( 
(g-i-1)\, \Delta_{2i+2}(k^{+}) + i\, \Delta_{2i+2}(k^{-})
\right) \\
& - \sum_{i=1}^{g/2} \left(
(g-i-1)\, \Delta_{2i+1}(k^{+})  +(i-1)\, \Delta_{2i+1}(k^{-})
\right)
\end{aligned} 
$$
and define a line bundle on $\tilde{\PP}$ by
$$
M= \mathcal{O}(D_k+\pi^* E_k) \, . \eqno(6)
$$
\begin{lemma} \label{lemma1}
The line bundle $M$ does not depend on $k$, satisfies
$f^*(M)=\omega_t$  and restricts to the general
fibre ${\PP}^1$ of $\pi$ as $\mathcal{O}(g-1)$.
For $\#\Lambda=2i+2$ its restriction to the
general fibre ${\PP}_1 \cup {\PP}_2$ over $\Delta^{\Lambda}$ is
of degree $(i,g-i-1)$, while for $\# \Lambda=2i+1$ its restriction
to the general fibre ${\PP}_1 \cup R \cup {\PP}_2$  is of degrees
$(i,-1,g-i)$.
\end{lemma}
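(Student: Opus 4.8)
The plan is to verify the lemma one claim at a time, using the explicit formulas already set up. First I would establish that $M$ is independent of $k$. For two indices $k, \ell$ the difference $D_k - D_\ell$ is a $\pi$-pullback of a divisor class on $B$: indeed $(g-1)(\tilde S_k - \tilde S_\ell)$ together with the $\Xi_k - \Xi_\ell$ terms restricts to degree $0$ on every fibre of $\pi$ (both $D_k$ and $D_\ell$ restrict to the canonical degree type, as checked in the discussion preceding the lemma), so $D_k - D_\ell = \pi^*(F_{k\ell})$ for some $F_{k\ell}$ on $B$. Then one must check $F_{k\ell} = E_\ell - E_k$, i.e.\ that $D_k + \pi^*E_k$ and $D_\ell + \pi^*E_\ell$ literally agree as divisor classes on $\widetilde{\PP}$. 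This reduces to a bookkeeping identity among the $\psi_k$, $\Delta_{2i+2}(k^\pm)$, $\Delta_{2i+1}(k^\pm)$ after substituting the formula (5) for $\psi_k$; the point is that the ``$(k^+)$ vs.\ $(k^-)$'' dependence in $E_k$ is precisely calibrated to cancel the $k$-dependence of $\tilde S_k$ and $\Xi_k$. I expect this comparison — matching coefficients of each $\Pi_j^{(\cdot)}(k^\pm)$ — to be the main obstacle, since it is the one genuinely multi-case computation, but it is routine given (5).

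Next, $f^*(M) = \omega_t$. By construction $f^*\mathcal{O}(D_k)$ and $\omega_t$ agree on the fibres of $t$, hence differ by $t^*$ of a class on $B$; since $t = \pi\circ f$, that class is pulled back along $\pi$, so $\omega_t = f^*(D_k) + f^*\pi^*(G) = f^*\mathcal{O}(D_k + \pi^*G)$ for a unique $G$ on $B$. It remains to identify $G$ with $E_k$. This is where the coefficient $\tfrac{2g-1}{2}\psi_k$ enters: comparing the two sides as classes on $\widetilde Y$, the $\psi_k$-term accounts for the difference between $\omega_t$ and the ``naive'' extension $f^*((g-1)\tilde S_k)$, and the boundary corrections $\Delta_{2i+2}(k^\pm)$, $\Delta_{2i+1}(k^\pm)$ absorb the contributions of the exceptional components $R^\Lambda$ and of the two rational tails. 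Concretely one restricts both sides to each boundary stratum and matches multidegrees, using $\omega_C = f^*(i, g-i-1)$ on the even strata (as spelled out just before the lemma) and the analogous computation on the odd strata.

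Finally, the fibrewise restriction statements. Over a general point of $B$ the fibre of $\pi$ is a smooth $\PP^1$ and $M$ restricts to $\mathcal{O}((g-1)\tilde S_k) = \mathcal{O}(g-1)$ since $\pi^*E_k$ restricts trivially and $\tilde S_k$ meets the fibre once. For $\#\Lambda = 2i+2$ even, the discussion preceding the lemma already shows $D_k$ restricts to type $(i, g-1-i)$ on ${\PP}_1\cup{\PP}_2$, in both cases $k\in\Lambda$ and $k\notin\Lambda$; again $\pi^*E_k$ restricts to $0$, giving the stated degree $(i, g-i-1)$. For $\#\Lambda = 2i+1$ odd, the fibre is ${\PP}_1 \cup R \cup {\PP}_2$ with $R$ the blown-up component; here I would repeat the intersection-number computation, now keeping track of the $\Pi_{2i+1}$, $\Pi_{2i+1}^c$, and $R^\Lambda$ terms in $\Xi_k$, to get degree $(i, -1, g-i)$ — the $-1$ on $R$ being forced by the fact that $R$ is a $(-2)$-curve contracted by the relevant map, exactly as in the genus-$2$ analysis in Section~2. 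The hard part is purely organizational: carrying the parity cases and the $k\in\Lambda$ / $k\notin\Lambda$ split through consistently.
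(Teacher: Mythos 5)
Your fibrewise checks (generic fibre of degree $g-1$, the multidegrees $(i,g-i-1)$ and $(i,-1,g-i)$ over the even and odd strata) are fine and match what the paper leaves as "easily checked". The genuine gap is in the central steps: identifying the correcting class, i.e.\ proving $\omega_t=f^*\mathcal{O}(D_k+\pi^*E_k)$ with the \emph{specific} $E_k$ (in particular the coefficient $\tfrac{2g-1}{2}\psi_k$), and proving that $D_k+\pi^*E_k$ is independent of $k$. Your concrete recipe for both is to restrict to fibres or to boundary strata and match multidegrees, but that can never determine the unknown class $G$ (or your $F_{k\ell}$): any class pulled back from $B$ restricts to degree zero on every component of every fibre of $\pi$ and of $t$, so such comparisons are blind to precisely the quantity you must compute. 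They only re-verify what the choice of $\Xi_k$ already guarantees. Similarly, "matching coefficients of each $\Pi_j(k^\pm)$" does not parse as a proof of $D_k-D_\ell=\pi^*(E_\ell-E_k)$, because $(g-1)(\tilde{S}_k-\tilde{S}_\ell)$ is a horizontal divisor, not supported on the boundary; it is only \emph{linearly equivalent} to a boundary-plus-pullback combination, and establishing that equivalence is exactly the kind of computation your plan omits.

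The missing idea, which is how the paper argues, is to evaluate along the sections rather than along fibres. Let $\tau_k$ be the section of $t$ with $f\circ\tau_k=\tilde{s}_k$. Since $f$ is a double cover ramified along $\tilde{S}_k$, one has $\tau_k^*\omega_t=\psi_k/2$, while $\tilde{s}_k^*\mathcal{O}(\tilde{S}_k)=-\psi_k$ gives
$$
\tilde{s}_k^*D_k=-(g-1)\psi_k+\sum_i\bigl((g-i-1)\Delta_{2i+2}(k^{+})+i\,\Delta_{2i+2}(k^{-})\bigr)+\sum_i\bigl((g-1-i)\Delta_{2i+1}(k^{+})+(i-1)\Delta_{2i+1}(k^{-})\bigr),
$$
so that $G=\tau_k^*\omega_t-\tilde{s}_k^*D_k$ is exactly $E_k$; this is where $\tfrac{2g-1}{2}\psi_k$ comes from, and formula (5) is not needed at all. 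Independence of $k$ then follows at once by restricting $\mathcal{O}(D_k+\pi^*E_k)$ and $\mathcal{O}(D_j+\pi^*E_j)$ to the same section $\tilde{S}_j$: both give $\psi_j/2$, so the pullback class by which they could differ vanishes. Without some horizontal evaluation of this kind your plan cannot close, so as written the proof is incomplete.
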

\begin{proof}
We use the section $\tau_k$ of $t: \widetilde{Y} \to B$ with
$f \tau_k=\tilde{s}_k$ with 
$\tilde{s}_k$ the natural sections of the map $\pi$ with image $\tilde{S}_k$. 
Then we have
$\tau_k^* \omega_t =\psi_k/2$  and
$\tau_k^* f^* D_k=\tilde{s}_k^* D_k$ for which we have
$$
\begin{aligned}
\tilde{s}_k^*D_k= -(g-1)\psi_k + 
\sum_{i=0}^{(g-1)/2} \left(
(g-i-1)\, \Delta_{2i+2}(k^{+}) + i\, \Delta_{2i+2}(k^{-})
\right) & \\
+ \sum_{i=1}^{g/2} \left(
(g-1-i)\, \Delta_{2i+1}(k^{+}) +(i-1)\, \Delta_{2i+1}(k^{-})
\right)& \, . \\
\end{aligned}
$$
From this we obtain $\tau_k^*(\omega_t)-\tau_k^*f^*D_k=\pi^*(E_k)$,
so that $\omega_t= f^*(\mathcal{O}(D_k+E_k))$. We also see that the
restriction of $M$ on the fibres of $\pi$ does not depend on $k$.
Moreover, we have
$$
\tilde{s}_j^*(D_k+\pi^*E_k)=\tau_j^* f^* (D_k+E_k)= \tau_j^*(\omega_t) =\psi_j/2
=\tilde{s}_j^*(D_j+\pi^*E_j)\, ,
$$
showing that the restrictions of $\mathcal{O}(D_k+\pi^*E_k)$
and $\mathcal{O}(D_j+\pi^* E_j)$
agree on $\tilde{S}_j$. The restrictions of the fibres of $\pi$ over
the general points of $\Delta_b^{\Lambda}$ are easily checked.
\end{proof}

We now want to compare $\pi_*(M)$ with the Hodge bundle
${\EE}=t_* (\omega_t)$ on $B$. The next proposition shows that
these agree up to  codimension $2$.

\begin{proposition}\label{codim2}
We have an exact sequence
$ 0 \to \pi_*(M)\to {\EE} \to \mathcal{T} \to 0$,
where $\mathcal{T}$ is a coherent sheaf that is a torsion sheaf supported on the boundary.
Moreover, we have
$c_1(\pi_*(M))=\lambda$.
\end{proposition}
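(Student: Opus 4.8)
The plan is to produce the exact sequence fibrewise and then globalize, using the fact established in Lemma \ref{lemma1} that $f^*(M)=\omega_t$ together with the diagram (3). First I would push $M$ forward along $\pi\colon \widetilde{\PP}\to B$ and $\omega_t$ forward along $t=\pi f\colon \widetilde Y\to B$. Since $f^*(M)=\omega_t$, the projection formula gives $f_*(\omega_t)=f_*f^*(M)=M\otimes f_*\mathcal{O}_{\widetilde Y}$, and because $f$ is a birational morphism of normal varieties with rationally-connected (indeed rational) fibres and $\widetilde{\PP}$ has rational singularities, $f_*\mathcal{O}_{\widetilde Y}=\mathcal{O}_{\widetilde{\PP}}$. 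Hence $f_*(\omega_t)=M$ on $\widetilde{\PP}$, and applying $\pi_*$ yields $t_*(\omega_t)$ mapping to $\pi_*(M)$. Wait — that direction is backwards: I actually want the natural map to go $\pi_*(M)\to {\EE}$. The correct way is: there is a canonical map $\pi_*(M)=\pi_*f_*(\omega_t)\to {\EE}=t_*(\omega_t)$ which on a general fibre is an isomorphism (over the locus where the fibre of $\pi$ is a smooth ${\PP}^1$ and $M$ restricts to $\mathcal{O}(g-1)$, so $h^0=g$, matching $\operatorname{rank}{\EE}=g$, and the comparison map is the identity on global sections of $\omega$ of the fibre of $t$). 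So the cokernel $\mathcal{T}$ is supported on the locus where $\pi$ has reducible fibres, i.e. on the boundary $\bigcup_\Lambda \Delta^\Lambda$; and the map is injective because $\pi_*(M)$ is torsion-free (being a subsheaf of the locally free ${\EE}$, once injectivity at the generic point is known — and at the generic point it is an isomorphism).

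The substantive point is to check that the comparison map $\pi_*(M)\to {\EE}$ is indeed injective and that $\mathcal{T}$ is torsion, equivalently that the map is an isomorphism in codimension one away from the boundary — but away from the boundary the fibres of $\pi$ are smooth ${\PP}^1$'s and the fibres of $t$ are smooth hyperelliptic curves with $f$ the $2{:}1$ canonical map, so $\pi_*(M)\to {\EE}$ is the classical identification of $H^0({\PP}^1,\mathcal{O}(g-1))$ with $H^0(C,\omega_C)$, an isomorphism. Over the boundary the fibre of $\pi$ acquires extra components (the ${\PP}_1\cup{\PP}_2$ or ${\PP}_1\cup R\cup{\PP}_2$ configurations of Figs.\ 1--2), and by Lemma \ref{lemma1} the degrees of $M$ there are $(i,g-i-1)$ or $(i,-1,g-i)$; a short computation with the normalization sequence of such a reducible rational curve shows $h^0(M|_{\text{fibre}})=g$ still (the $-1$ on $R$ and the split degrees exactly conspire to keep $h^0$ constant), while $h^0(\omega|_{\text{fibre of }t})=g$ as well, but the two $g$-dimensional spaces need not coincide as subspaces after pushforward: the map $\pi_*(M)\to{\EE}$ can drop rank there, producing the torsion sheaf $\mathcal{T}$. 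I would phrase this via cohomology and base change: both $\pi_*(M)$ and ${\EE}$ are locally free of rank $g$ (for ${\EE}$ this is standard; for $\pi_*(M)$ one checks $h^0$ of $M$ on every fibre is $g$ and $h^1$ vanishes, so $\pi_*(M)$ is locally free by Grauert), the natural map between them is generically an isomorphism, hence injective with torsion cokernel supported where the determinant of the map vanishes — a divisor contained in the boundary.

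For the Chern class statement, since $0\to\pi_*(M)\to{\EE}\to\mathcal{T}\to 0$ with $\mathcal{T}$ a torsion sheaf supported in codimension $\geq 1$, we get $c_1(\pi_*(M))=c_1({\EE})-c_1(\mathcal{T})=\lambda-[\operatorname{supp}\mathcal{T}]$ a priori, so to conclude $c_1(\pi_*(M))=\lambda$ I must show $\mathcal{T}$ is actually supported in codimension $\geq 2$, i.e.\ the determinant-of-the-map divisor is empty — the map $\pi_*(M)\to{\EE}$ is an isomorphism at the \emph{generic} point of every boundary divisor $\Delta^\Lambda$. This is the main obstacle, and I expect it to follow from the explicit description: at a general point of $\Delta^\Lambda$ the reducible-fibre computation above shows the restriction map on $H^0$ is still bijective (no sections of $\omega_t$ are lost because the central fibre of $t$ has $\omega$ of the stated split degrees, and these match the split degrees of $M$ under $f$), hence the comparison map is an isomorphism there, so $\operatorname{codim}\operatorname{supp}\mathcal{T}\geq 2$ and $c_1(\pi_*(M))=\lambda$. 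Concretely one verifies this for each parity of $\#\Lambda$ using Figs.\ 1 and 2 and the degree data $(i,g-i-1)$ resp.\ $(i,-1,g-i)$ from Lemma \ref{lemma1}, reading off that $h^0$ of $M$ restricted to the general fibre over $\Delta^\Lambda$ equals $g$ and that the resulting sections are exactly the pullbacks under $f$ of a basis of $H^0$ of $\omega$ on the corresponding fibre of $t$.
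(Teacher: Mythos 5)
Your first paragraph contains a genuine error, and it sits exactly where the content of the proposition lies. The map $f\colon \widetilde{Y}\to\widetilde{\PP}$ is not a birational morphism: it is the degree~$2$ (admissible double cover) map, with fibres of $t$ the genus-$g$ curves and fibres of $\pi$ rational. Hence $f_*\mathcal{O}_{\widetilde{Y}}\neq\mathcal{O}_{\widetilde{\PP}}$ (it has rank $2$), and $f_*(\omega_t)=f_*f^*(M)=M\otimes f_*\mathcal{O}_{\widetilde{Y}}\neq M$; likewise your ``corrected'' identity $\pi_*(M)=\pi_*f_*(\omega_t)$ is false, since $\pi_*f_*(\omega_t)=t_*(\omega_t)={\EE}$, so it would assert $\pi_*(M)={\EE}$ on the nose and make the proposition vacuous. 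The correct construction of the comparison map is the unit $M\to f_*f^*M=f_*\omega_t$ followed by $\pi_*$, or equivalently: tensor the exact sequence $0\to\mathcal{O}_{\widetilde{\PP}}\to f_*\mathcal{O}_{\widetilde{Y}}\to\mathcal{F}\to 0$ with $M$ and push forward. This is precisely what the paper does, and the rank-$2$ structure you denied is the mechanism of the proof: $\mathcal{F}$ has rank $1$ and degree $-(g+1)$ on smooth fibres of $\pi$ (Riemann--Roch for the double cover), so $M\otimes\mathcal{F}$ has fibre degree $g-1-(g+1)=-2$ there, whence the cokernel $\mathcal{T}=\pi_*(M\otimes\mathcal{F})$ is torsion supported on the boundary, with exactness on the right coming from $R^1\pi_*(M)=0$.

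For the statement $c_1(\pi_*(M))=\lambda$ your route is genuinely different from the paper's: you want to show the comparison map is an isomorphism at the generic point of every boundary divisor (so $\mathcal{T}$ sits in codimension $\geq 2$ and $c_1(\pi_*M)=c_1({\EE})=\lambda$), whereas the paper computes $c_1(\pi_*(M))$ directly by Grothendieck--Riemann--Roch, evaluating $\pi_*(D_k\,\omega_\pi)$ and $\pi_*(D_k^2)$ and recognizing the result as the expression (4) for $\lambda$ on the Hurwitz space. Your fibrewise plan can be made to work: by Lemma \ref{lemma1} the degrees of $M$ on the general boundary fibres are $(i,g-i-1)$, resp.\ $(i,-1,g-i)$, giving $h^0=g$ and $h^1=0$ there; pullback along the finite surjective $f$ is injective on sections; and the corresponding fibre of $t$ is nodal of arithmetic genus $g$, so both sides have dimension $g$ and the map on fibres is bijective. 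But as written this is only a sketch (``I expect it to follow''): you still need the cohomology-and-base-change bookkeeping and the identification of the general fibre of $t$ over each $\Delta^\Lambda$ in both parity cases, and your phrase that the boundary sections of $\omega_t$ ``are exactly the pullbacks under $f$'' is the claim to be proved, not a reformulation. Note also that the paper's GRR computation produces the formulas (7)--(8) that are reused later (e.g.\ for $c_1(\pi_*(N^{\otimes m}))$ and the hypertangent divisor), which your more geometric argument would not supply.
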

\begin{proof} By Lemma \ref{lemma1} we have
$\omega_t=f^*(M)$. But $R^1\pi_*(M)=(0)$, so we have
$$
\pi_*(M\otimes f_*\mathcal{O}_{\tilde{Y}})=
\pi_*f_*(f^*(M))=\pi_*f_*(\omega_t)=t_*(\omega_t)
\, .
$$
We have an exact sequence
$
0\to \mathcal{O}_{\tilde{\PP}}\to f_* \mathcal{O}_{\tilde{Y}} \to 
\mathcal{F} \to 0
$
with $\mathcal{F}$ a coherent sheaf of rank~$1$ that restricted to the
smooth fibers of $\pi$ has degree $-(g+1)$,
as one sees by
applying Riemann-Roch to $f$ and $\mathcal{O}_{\tilde{Y}}$.
Tensoring the sequence
with $M$ and applying $\pi_*$ gives the exact sequence
$$
0 \to \pi_*(M) \to \pi_*(M \otimes f_*\mathcal{O}_{\tilde{Y}}) \to 
\pi_*(M\otimes \mathcal{F}) \to 0\, .
$$
On the smooth fibers of $\pi$ the sheaf
$M \otimes \mathcal{F}$ restricts to a
line bundle of degree $(g-1)-(g+1)=-2$,
hence $\pi_*(M\otimes \mathcal{F})$ is a
torsion sheaf.

We now calculate $c_1(\pi_*(M))$.
We apply Grothendieck-Riemann-Roch to $\pi$ and $\mathcal{O}(D_k)$. It says
$$
{\rm ch}(\pi_!(\mathcal{O}(D_k)))=\pi_*({\rm ch}(\mathcal{O}(D_k))){\rm Td}^{\vee}(\omega_{\pi})\, ,
$$
which by  $\pi_*({\rm Td}^{\vee}_2(\omega_{\pi}))=0$ gives
$$
c_1(\pi_*(\mathcal{O}(D_k)))=\frac{1}{2} 
\pi_{*}(-D_k\, \omega_{\pi} +  D_k^2)\, . 
$$
We calculate
$$
\pi_*(D_k \omega_{\pi})= (g-1)\psi_k - \sum_{i=0}^{(g-1)/2} 
\left(
(g-i-1)\, \Delta_{2i+2}(k^{+})+i \, \Delta_{2i+2}(k^{-})
\right) 
+\sum_{i=1}^{g/2} \Delta_{2i+1} \, , \eqno(7)
$$
and
$$
\begin{aligned}
\pi_*(D_k^2)=&
-(g-1)^2\psi_k + \sum_{i=0}^{(g-1)/2} 
(g-i-1)(g+i-1)\, \Delta_{2i+2}(k^{+}) \\
& +\sum_{i=0}^{(g-1)/2} (2g-2-i)i \, \Delta_{2i+2}(k^{-}) 
+ \sum_{i=1}^{g/2} ((2g-i-1)(i-1)-i^2)  
\, \Delta_{2i+1}\,  . \\ \
\end{aligned} \eqno(8)
$$
Adding $\pi_*(\pi^* E_k)$ gives
$$
\begin{aligned}
c_1(\pi_*(M))= \frac{g^2}{2} \, \psi_k -\frac{1}{2} \sum_{i=0}^{(g-1)/2}
\left( 
(g-i-1)(g-i) \, \Delta_{2i+2}(k^{+}) +
i(i+1)\, \Delta_{2i+2}(k^{-}) 
\right)&\\
- \sum_{i=1}^{g/2} \left(
(g-i)^2\, \Delta_{2i+1}(k^{+}) + i^2 \, \Delta_{2i+1}(k^{-})
\right)\, . &\\
\end{aligned}
$$
Substituting the formula for $\psi_k$ we find
$$
c_1(\pi_*(M))= \sum_{i=0}^{(g-1)/2} \frac{(g-i)(i+1)}{2(2g+1)} 
\Delta_{2i+2} +
\sum_{i=1}^{g/2} \frac{i(g-i)}{2g+1} \Delta_{2i+1} =\lambda\, .
$$
\end{proof}

The line bundle $M$ on $\tilde{\PP}$ is not base point free as Proposition
\ref{codim2} shows; the restriction to the $R$-part has negative degree.
We can make it base point free by defining
$$
N=M(-R)=\mathcal{O}(D_k+\pi^* E_k -R) \, . \eqno(9)
$$
Now the restriction of $N$ to a general fibre over $\Delta_{2i+1}$,
which is a chain of three rational curves ${\PP}_1,R,{\PP}_2$,
has degrees $(i-1,1,g-i-1)$ and one checks that $N$ is base point free.

\begin{lemma} \label{pistarN=E}
Up to codimension~$2$ we have on $B$ that $\pi_*(N)={\EE}$.
\end{lemma}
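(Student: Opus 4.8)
The plan is to reduce the statement to what Proposition \ref{codim2} already tells us about $M$, exploiting that $N$ and $M$ differ only by the effective divisor $R$, which is supported over the boundary. Since $R$ is effective on $\widetilde{\PP}$, multiplication by the tautological section of $\mathcal{O}(R)$ produces the short exact sequence
$$
0 \to N \to M \to M\otimes \mathcal{O}_R \to 0 ,
$$
namely $0\to \mathcal{O}_{\widetilde{\PP}}(-R)\to\mathcal{O}_{\widetilde{\PP}}\to\mathcal{O}_R\to 0$ tensored with the locally free sheaf $M$, together with the identification $M(-R)=\mathcal{O}(D_k+\pi^{*}E_k-R)=N$. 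Pushing forward along $\pi$ and using left exactness yields
$$
0\to \pi_*(N)\to\pi_*(M)\to \pi_*(M\otimes\mathcal{O}_R),
$$
so the cokernel of $\pi_*(N)\hookrightarrow\pi_*(M)$ is a subsheaf of $\pi_*(M\otimes\mathcal{O}_R)$. The whole argument then comes down to showing that $\pi_*(M\otimes\mathcal{O}_R)$ is supported in codimension $\geq 2$ on $B$.

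To see this, I would first note that $R$ is empty over $\mathcal{H}_{g,2}$ and over the boundary divisors $\Delta^{\Lambda}$ with $\#\Lambda$ even, so its image under $\pi$ is contained in the union of the $\Delta^{\Lambda}$ with $\#\Lambda=2i+1$ odd. Over the generic point of such a $\Delta^{\Lambda}$ the fibre of $\pi|_R$ is the $R$-component of the fibre $\PP_1\cup R\cup\PP_2$ of $\pi$, a copy of $\PP^1$ on which $M$ has degree $-1$ by Lemma \ref{lemma1}; hence $H^0(\PP^1,M|_{\PP^1})=0$. Since $\pi|_R$ is flat over the generic point of each odd boundary divisor (this is part of the construction of the good model in diagram (3)), semicontinuity then forces the stalk of $\pi_*(M\otimes\mathcal{O}_R)$ at each such generic point to vanish, and it vanishes on the interior as well. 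Therefore $\pi_*(M\otimes\mathcal{O}_R)$ is zero in codimension $\leq 1$, i.e.\ supported in codimension $\geq 2$, and consequently $\pi_*(N)\to\pi_*(M)$ is an isomorphism away from a closed subset of codimension $\geq 2$.

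Finally I would combine this with Proposition \ref{codim2}: that proposition gives $0\to\pi_*(M)\to\EE\to\mathcal{T}\to 0$ with $\mathcal{T}$ torsion, and the equality $c_1(\pi_*(M))=\lambda=c_1(\EE)$ forces $\mathcal{T}$ to have no codimension-$1$ component, so $\mathcal{T}$ is itself supported in codimension $\geq 2$. Hence $\pi_*(N)$ and $\EE$ are isomorphic outside a locus of codimension $\geq 2$, which is the assertion. The one step that genuinely needs care is the codimension-$2$ claim for $\pi_*(M\otimes\mathcal{O}_R)$: one must rule out that some component of $R$ dominates a boundary divisor along which $M$ restricts with non-negative fibre degree. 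This is exactly what the multidegrees recorded in Lemma \ref{lemma1} exclude --- $M$ has degree $-1$ along every $R$-component --- together with the flatness of $\pi|_R$ over those boundary points, which is built into the good model.
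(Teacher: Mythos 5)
Your argument is correct, but the key step is different from the paper's. Both proofs start from the same sequence $0\to N\to M\to M_{|R}\to 0$ and both end by invoking Proposition \ref{codim2} (torsion cokernel $\mathcal{T}$ with $c_1(\pi_*(M))=\lambda=c_1(\EE)$, hence support in codimension $\geq 2$). Where you differ is in comparing $\pi_*(N)$ with $\pi_*(M)$: the paper applies Grothendieck--Riemann--Roch to both $M$ and $N$ (using $R^1\pi_*=0$, that $R$ is a $(-2)$-curve on one-dimensional slices, and that $\omega_{\pi}|_R$ is trivial) to get $c_1(\pi_*(N))=c_1(\pi_*(M))$, so the torsion quotient $\pi_*(M_{|R})$ has trivial first Chern class and hence codimension-$\geq 2$ support; you instead kill $\pi_*(M_{|R})$ directly at the generic points of the odd boundary divisors, using that $M$ restricts to degree $-1$ on the generic $R$-fibres (Lemma \ref{lemma1}), so $H^0$ of the generic fibre vanishes. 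Your route is more elementary: it needs no GRR computation and not even $R^1\pi_*(N)=0$, since left exactness already bounds the cokernel of $\pi_*(N)\hookrightarrow\pi_*(M)$ by a subsheaf of $\pi_*(M_{|R})$. Two small polish points: the appeal to semicontinuity and flatness of $\pi|_R$ is cleaner as flat base change to $\mathrm{Spec}\,\mathcal{O}_{B,\eta}$ at the generic point $\eta$ of each odd $\Delta^{\Lambda}$, which identifies the stalk with $H^0(\PP^1_{k(\eta)},\mathcal{O}(-1))=0$; and your final inference that a torsion sheaf with vanishing $c_1$ has no divisorial component uses properness of $B$ (a nonzero effective divisor has nonzero class), which is also implicit in the paper's own use of Proposition \ref{codim2}. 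What the paper's computation buys in exchange is the explicit value $c_1(\pi_*(N))=\lambda$, consistent with the $m=1$ case of the later formula for $c_1(\pi_*(N^{\otimes m}))$.
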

\begin{proof}
We have $R^1\pi_*(N)=0$. Therefore the exact sequence
$0 \to N \to M \to M_{|R} \to 0$ yields the exact sequence
$$
0 \to \pi_*(N) \to \pi_*(M) \to \pi_*(M_{|R}) \to 0 \, .
$$
We now show that $c_1(\pi_*(N))=c_1(\pi_*(M))$. Since $R^1\pi_*(M)=0=
R^1\pi_*(N)$ we find by Grothendieck-Riemann-Roch that
$$
c_1(\pi_*(M))=\frac{1}{2} \pi_*(c_1(M)^2-c_1(M) \omega_{\pi}), \quad
c_1(\pi_*(N))=\frac{1}{2} \pi_*(c_1(N)^2-c_1(N) \omega_{\pi})\, .
$$
By the definition of $N$ and the fact that $R$ is a $(-2)$-curve if
we take a base $B$ of dimension~$1$, and thus has intersection number
$0$ with a fibre, we have
$$
\pi_*(c_1(N))^2= \pi_*(c_1(M))^2
$$
and $c_1(N) \,  \omega_{\pi}=c_1(M) \, \omega_{\pi}$
since the restriction of $\omega_{\pi}$ to $R$ is trivial.
\end{proof}
\end{section}
\begin{section}{The rational normal curve}\label{rationalnormalcurve}
The image of a hyperelliptic curve by the canonical map is a
rational normal curve, that is, ${\PP}^1$ embedded in ${\PP}^{g-1}$
via the linear system of degree $g-1$.

In our setting we can see the rational normal curve and
its degenerations
using the extension $N$ of the line bundle of degree $g-1$, as defined in (9), to
the compactification as constructed in the preceding section.

We let $u: {\PP}({\EE}) \to B$ be the natural projection. Now $N$ is
base point free and up to codimension $2$ we have $\pi_*(N)={\EE}$,
so the global-to-local map $\pi^*\pi_{*}(N) \to N$ induces a surjective
map $\nu: \pi^*({\EE}) \to N$ over $\tilde{\PP}$. This induces a morphism
$\phi: \tilde{\PP} \to {\PP}({\EE})$ by associating to a point of
$\tilde{\PP}$ the kernel of $\nu$. It fits into a diagram
$$
\begin{xy}
\xymatrix{
\tilde{\PP} \ar[r]^{\phi} \ar[dr]_{\pi}  & {\PP}({\EE}) \ar[d]^u \\
& B \\
}
\end{xy}
$$
\begin{proposition} \label{geometric-meaning}
For a point of $B$ with smooth fibre under $\pi$ the image
of $\phi$ is a rational normal curve of degree $g-1$. For a general
point $\beta \in \Delta_{2i+2}$ with fibre ${\PP}_1 \cup {\PP}_2$
the image is a union of two rational normal curves of
degree $i$ and $g-i-1$.
For a general point $\beta \in \Delta_{2i+1}$ with fibre
${\PP}_1,R,{\PP}_2$ the image is a union of three rational normal
curves of degree $i-1$, $1$ and $g-i-1$. Here we interpret the case
of degree $0$ as a contracted curve.
\end{proposition}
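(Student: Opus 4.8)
The plan is to analyze the morphism $\phi\colon \widetilde{\PP}\to{\PP}({\EE})$ fibre by fibre, using the fact, established in Proposition~\ref{codim2} and Lemma~\ref{pistarN=E}, that $\pi_*(N)={\EE}$ up to codimension two together with the explicit restriction degrees of $N$ computed in Lemma~\ref{lemma1} and the remark preceding this proposition. First I would note that the morphism $\phi$ is given by the linear system $|\pi^*({\EE})\to N|$, so on a fibre $F=\pi^{-1}(\beta)$ the composite $\phi|_F$ is given by the restriction of $N$ to $F$ together with the subspace of $H^0(F,N|_F)$ coming from the image of ${\EE}_\beta=(\pi_*N)_\beta\to H^0(F,N|_F)$; since cohomology and base change applies ($R^1\pi_*N=0$ and the fibres are nodal curves of genus $0$ components with the degree conditions making $h^0$ constant), this map is an isomorphism onto $H^0(F,N|_F)$ over the locus where $\pi_*N$ agrees with ${\EE}$. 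Over a point with smooth fibre $F\cong{\PP}^1$, $N|_F=\mathcal{O}(g-1)$ is very ample with $h^0=g$, so $\phi|_F$ is the degree $g-1$ embedding into ${\PP}^{g-1}={\PP}({\EE}_\beta)$, i.e.\ a rational normal curve; this is exactly the classical statement that the canonical map of a hyperelliptic curve factors through the rational normal curve.

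Next I would treat the boundary fibres. For a general $\beta\in\Delta_{2i+2}$ the fibre is ${\PP}_1\cup{\PP}_2$ with $N$ of bidegree $(i,g-i-1)$ by Lemma~\ref{lemma1}; the global sections split according to the two components (the node imposing one linear condition), and the restriction of $N$ to ${\PP}_1$ resp.\ ${\PP}_2$ maps it to a rational normal curve of degree $i$ resp.\ $g-i-1$, spanning complementary linear subspaces of ${\PP}({\EE}_\beta)$ that meet in the image of the node. The case of degree $0$ (when $i=0$) is a single point, the "contracted curve" alluded to in the statement. For a general $\beta\in\Delta_{2i+1}$ the fibre ${\PP}_1\cup R\cup{\PP}_2$ carries $N$ of tridegree $(i-1,1,g-i-1)$ (this is where passing from $M$ to $N=M(-R)$ is essential, since $M$ had degree $-1$ on $R$ and was not base point free); one checks base-point-freeness of $N$ on this chain — the point on $R$ over which $\mathcal{O}(1)$ might give a base point is resolved by the neighbouring components — so $\phi|_F$ is a morphism mapping ${\PP}_1$, $R$, ${\PP}_2$ to rational normal curves of degrees $i-1$, $1$, $g-i-1$ respectively, again with the degree $0$ pieces contracted to points.

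The main obstacle I anticipate is not the degree bookkeeping, which follows directly from Lemma~\ref{lemma1}, but rather verifying that on the boundary fibres the map ${\EE}_\beta\to H^0(F,N|_F)$ is still an isomorphism — in other words, that the codimension-two discrepancy between $\pi_*N$ and ${\EE}$ from Proposition~\ref{codim2} does not interfere at the \emph{general} point of each boundary divisor $\Delta_{2i+1}$ or $\Delta_{2i+2}$. I would handle this by a dimension count: the cokernel $\mathcal{T}$ is supported in codimension $\geq 2$, hence misses the generic point of every boundary divisor, so over such points $\pi_*N\to{\EE}$ is an isomorphism and the fibrewise evaluation ${\EE}_\beta=(\pi_*N)_\beta\to H^0(F,N|_F)$ is the base-change map, which is an isomorphism because $h^0$ is constant (the degrees on each component are $\geq -1$ with the chain configuration forcing $\chi=h^0=g$). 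A secondary point to check is that $\phi$ is actually a morphism near these fibres, i.e.\ that the surjection $\nu\colon\pi^*{\EE}\to N$ from the global-to-local map is genuinely surjective there; this again follows from base-point-freeness of $N$ on the relevant fibres, which I would verify componentwise using the explicit tridegrees. Once these points are in place, the identification of each image with a (possibly contracted) rational normal curve of the stated degree is immediate from the standard description of morphisms from ${\PP}^1$ given by $\mathcal{O}(d)$.
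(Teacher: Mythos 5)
Your proposal is correct and follows essentially the same route as the paper, whose proof is the one-line observation that the statement follows from Lemma \ref{lemma1} (together with the degrees of $N=M(-R)$ and its base-point-freeness recorded just after its definition); you simply spell out the fibrewise degree bookkeeping, base change, and the fact that the codimension-two discrepancy between $\pi_*(N)$ and ${\EE}$ avoids the generic points of the boundary divisors. No gaps.
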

\begin{proof} The proposition follows almost immediately from Lemma \ref{lemma1}.
\end{proof}

\begin{remark}\label{caseg=2}
If $i=1$ then ${\PP}_1$ is contracted. If also $g=2$ then both ${\PP}_1$
and ${\PP}_2$ are contracted and the image of $R$ coincides with the fibre of ${\PP}({\EE})$.
\end{remark}
\begin{remark}

The sections $\tilde{s}_i: B \to \tilde{\PP}$ for $i=1,\ldots,b$ induce sections
$\sigma_i=\phi \circ s_i: B \to {\PP}({\EE})$ by
sending $\beta$ to the kernel of ${\EE}=\tilde{s}_i^* \pi^* ({\EE}) \to
s_i^*(N)$.
\end{remark}

\begin{remark}\label{remark-blowdown}
In the case $g=2$ the map $\phi$ is a birational map
$\tilde{\PP}\to {\PP}({\EE})$ that blows down boundary components.
More precisely, over $\Delta_{2}$ it blows down ${\Pi}_{2}$
and over $\Delta_{3}$ the components supported at ${\Pi}_{3}=
{\Pi}^c_{3}$.
\end{remark}
\end{section}
\begin{section}{Symmetrization}\label{symmetrization}
We have been working with the moduli space ${\mathcal{H}}_{g,2}$
and $\mathcal{M}_{0,b}$ and their compactifications. Here
the symmetric group $\mathfrak{S}_{b}$ acts. We therefore make our construction symmetric.

We put $D=\sum_{k=1}^b D_k$ and $E=\sum_{k=1}^b E_k$ and set
$$
\tilde{M}=\mathcal{O}(D+E)\,, \quad \psi=\sum_{k=1}^b \psi_k\, ,
\quad \text{\rm and} \quad 
\tilde{S}=\sum_{k=1}^b \tilde{S}_k\, .
$$
We find
$$
\psi= 4 \, \sum_{i=0}^{(g-1)/2} \frac{(g-i)(i+1)}{2g+1} \, \Delta_{2i+2}
+ 2\, \sum_{i=1}^{g/2} \frac{(2g-2i+1)(2i+1)}{2g+1} \, \Delta_{2i+1}
$$
and
$$
\begin{aligned}
D= (g-1)\tilde{S} +2 \sum_{i=0}^{(g-1)/2} 
\left( 
(g-i-1)(i+1) \Pi_{2i+2}+ 
i\, (g-i) \, \Pi^c_{2i+2}\right) +\\
\sum_{i=1}^{g/2} \left( (g-4i-1)\Pi_{2i+1}-(3g-4i+1)\, \Pi^c_{2i+1}\right) & \\
\end{aligned}
$$
and
$$
E=  \frac{2g-1}{2} \psi 
 -2 \sum_{i=0}^{(g-1)/2} 
\left( (g-i)(2i+1)-(i+1)\right) \Delta_{2i+2} 
 -\sum_{i=1}^{g/2} \left(4i\, (g-i)-(g+2)\right) \Delta_{2i+1} 
$$
\end{section}
\begin{section}{The Case of Hyperelliptic Genus Three}
The Hurwitz space $\mathcal{H}_{3,2}$ admits a compactification $\overline{\mathcal{H}}_{3,2}$
with boundary components $\Delta^{\Lambda}$ with $\# \Lambda \in \{ 2,3,4\}$. Taking the components
with $\Lambda$ of fixed cardinality together gives boundary components $\Delta_2, \Delta_3$ and $\Delta_4$. Under the morphism
$\overline{\mathcal{H}}_{3,2}\to \overline{\mathcal{M}}_3$ the components $\Delta_2$ and $\Delta_4$ are mapped to
$\delta_0$, while $\Delta_3$ goes to $\delta_1$. The formulas (4) and (5) specialize to
$$
\lambda= \frac{3}{14} \Delta_2+\frac{2}{7} \Delta_3 + \frac{2}{7}\Delta_4   \eqno(10)
$$
and
$$
\psi_k= \frac{5}{7}\,  \Delta_2(k^{+})+\frac{1}{21}\, \Delta_2(k^{-}) +\frac{20}{21}\, \Delta_3(k^{+})+
\frac{2}{7}\,  \Delta_3(k^{-}) +\frac{2}{7}\, \Delta_4 \, .
$$
\begin{remark}\label{weight14}
The equation (10) shows that on $\overline{\mathcal{H}}_{3,2}$ there exists a scalar-valued modular
form of weight $14$ whose square equals $\chi_{28}$, a form mentioned in Section \ref{hyperellg3}. 
Since on $\overline{\mathcal{H}}_3$ we have
$28\, \lambda= 3\, \delta_0+8\, \delta_1+8\, \zeta_1$, an integral class not divisible by $2$, 
there is not a modular form of weight $14$
on $\overline{\mathcal{H}}_3$ with square $\chi_{28}$. Compare with Cornalba's formula
at the end of Section \ref{hyperell-stack}.
\end{remark}

We have the line bundle $M$ on $\tilde{\PP}$ 
defined in (6) corresponding to the divisor class
$D_k+E_k$ given by
$$
D_k= 2\, \tilde{S}_k+2\, \Pi_2(k^{+}) + 2\, \Pi_4(k^{+}) + \Pi_3(k^{+})-2\, \Pi_3^c(k^{+})-\Pi_3(k^{-})
$$
and
$$
E_k=\frac{5}{2}\psi_k- (2\, \Delta_2(k^{+})+\Delta_3(k^{+})+\Delta_4)\, ,
$$
where $\psi_k$ is given in (6). Define the rational divisor class
$$
U:= \frac{1}{14} \Delta_2 + \frac{3}{7}(\Delta_3+\Delta_4) = \frac{3}{2} \psi_k -(\Delta_2(k^{+})+\Delta_3(k^{+})) \, . 
$$
The divisor class of $D_k+E_k$ is independent of $k$ as observed in Lemma \ref{lemma1},
but this can be seen also directly from the next lemma.
\begin{lemma} \label{Nformula}
We have the linear equivalence $D_k+E_k \sim -\omega_{\pi}+\Pi_2+\Pi_3+ \pi^*(U)$.
\end{lemma}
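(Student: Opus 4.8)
The plan is to read the asserted linear equivalence as a relative canonical bundle formula on $\tilde{\PP}$ and to prove it by computing the relative dualizing sheaf $\omega_\pi$ of $\pi\colon\tilde{\PP}\to B$ explicitly and then substituting the genus three expressions recalled above. Note first that $D_k+\pi^{*}E_k$ is by definition the class of the line bundle $M$ of (6), which by Lemma \ref{lemma1} does not depend on $k$; so the statement amounts to an identity $[M]=-\omega_\pi+\Pi_2+\Pi_3+\pi^{*}(U)$ in $\mathrm{Pic}(\tilde{\PP})_{\QQ}$.

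To compute $\omega_\pi$, recall that $\tilde{\PP}$ is the minimal resolution of the base change $\PP=\overline{\M}_{0,b+1}\times_{\overline{\M}_{0,b}}\widetilde{\mathcal H}_{3,2}$ of the universal curve $\pi_{b+1}\colon\overline{\M}_{0,b+1}\to\overline{\M}_{0,b}$ (here $b=8$) along $h\colon B\to\overline{\M}_{0,b}$, the latter being unramified in codimension one away from $\Delta_3$ and ramified of order two along $\Delta_3$, as recorded by $h^{*}(S_b^{\Lambda})=2\,\Delta^{\Lambda}$ for $\#\Lambda$ odd. Starting from the standard identity $\omega_{\pi_{b+1}}=\psi_{b+1}-\sum_{i=1}^{b}S_{b+1}^{\{i,b+1\}}$ on $\overline{\M}_{0,b+1}$, I pull back along $\widetilde c\colon\tilde{\PP}\to\overline{\M}_{0,b+1}$: one has $\widetilde c^{\,*}S_{b+1}^{\{i,b+1\}}=\tilde S_i$, and $\widetilde c^{\,*}\psi_{b+1}$ is rewritten in terms of $\tilde S_k,\ \Pi^{\Lambda},\ \Pi^{\Lambda^{c}},\ R^{\Lambda}$ on $\tilde{\PP}$ and the pullbacks of $\psi_k$ and $\Delta_j$ from $B$, using the relations (4), (5) and the pullback formulas $h^{*}(S_b^{\Lambda})$, $\pi^{*}\Delta^{\Lambda}=\Pi^{\Lambda}+\Pi^{\Lambda^{c}}$ resp.\ $\Pi^{\Lambda}+R^{\Lambda}+\Pi^{\Lambda^{c}}$ recalled earlier. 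Since the resolution is crepant along the transverse $A_1$-singularities of $\PP$ there is no discrepancy term, but one has to keep the $R^{\Lambda}$ that occur in the pullbacks over $\Delta_3$. An alternative that bypasses $\overline{\M}_{0,b+1}$: from $f^{*}M=\omega_t$ in Lemma \ref{lemma1} and Riemann--Hurwitz for the degree two morphism $f$, $\omega_t=f^{*}\omega_\pi\otimes\omega_{\widetilde Y/\tilde{\PP}}$ with $\omega_{\widetilde Y/\tilde{\PP}}=f^{*}\mathcal L$ for the line bundle $\mathcal L$ whose square is the branch divisor $\sum_k\tilde S_k+(\text{boundary terms})$; injectivity of $f^{*}$ on rational Picard groups then gives $M\sim\omega_\pi+\mathcal L$, and $\mathcal L$ is pinned down by $\deg(\mathcal L|_{\text{fibre}})=g+1$ and its restrictions to the sections $\tilde S_k$, although over the boundary, where $f$ degenerates, this needs extra care.

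Granting a formula for $\omega_\pi$, the lemma becomes a finite verification, and it is cleanest to show directly that $\Phi:=[M]+\omega_\pi-\Pi_2-\Pi_3$ equals $\pi^{*}(U)$. As $\Phi$ has degree $(g-1)+(-2)=0$ on a general fibre, we may write $\Phi=\pi^{*}V+Z$ with $V\in\mathrm{Pic}(B)_{\QQ}$ and $Z$ supported on components of reducible fibres of $\pi$. Pulling back along the section $\tilde s_k$ and using $\tilde s_k^{*}[M]=\psi_k/2$ (from the proof of Lemma \ref{lemma1}), $\tilde s_k^{*}\omega_\pi=\psi_k$, and $\tilde s_k^{*}\Pi_j=\Delta_j(k^{+})$, one gets $\tilde s_k^{*}\Phi=\tfrac32\psi_k-\Delta_2(k^{+})-\Delta_3(k^{+})=U$; hence $V=U$, and it remains to show $Z=0$, that is, that $\Phi$ restricts to degree zero on each component of each boundary fibre. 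Over $\Delta_2$ and $\Delta_4$ this is immediate from the degree data $(i,g-i-1)$ of $M$ in Lemma \ref{lemma1}, together with $\omega_\pi|_{\PP_j}=-1$ and $\Pi_2|_{\PP_1}=(\Pi^{\Lambda})^{2}=-1$.

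The main obstacle is the check over the odd boundary $\Delta_3$, where the general fibre $\PP_1\cup R\cup\PP_2$ carries the exceptional $(-2)$-curve $R$ produced by the resolution: there $M$ restricts with degrees $(1,-1,2)$ while $-\omega_\pi$ restricts with degrees $(1,0,1)$, and one must verify that the $\Pi_3$- and $\Pi_3^{c}$-contributions inside $\Xi_k\subset D_k$, together with the exceptional curves $R^{\Lambda}$ and their multiplicities in $\pi^{*}\Delta_3$, account exactly for the remaining discrepancy on each of the three components. This is the only non-routine step: the ``doubling'' behaviour along $\Delta_3$ (visible both in $h^{*}(S_b^{\Lambda})=2\,\Delta^{\Lambda}$ and in the coefficient $2$ in formula (5)) is what makes the signs and coefficients delicate here, and it is precisely at this point that the exact contribution of the $R$-components to the right hand side of the asserted equivalence has to be identified.
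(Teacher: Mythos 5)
Your skeleton is the same as the paper's own proof: show that the difference of the two sides restricts to degree zero on every component of every fibre of $\pi$ (so that it is a pullback from $B$), and then identify the pullback by restricting to a section, where $\tilde{s}_k^{*}(D_k+\pi^{*}E_k)=\psi_k/2$, $\tilde{s}_k^{*}\omega_{\pi}=\psi_k$ and $\tilde{s}_k^{*}\Pi_j=\Delta_j(k^{+})$ give exactly $U=\tfrac32\psi_k-\Delta_2(k^{+})-\Delta_3(k^{+})$. Two remarks on the set-up: the lengthy preliminary computation of $\omega_{\pi}$ via $\overline{\mathcal{M}}_{0,b+1}$ or Riemann--Hurwitz is unnecessary, since all that is ever used is the restriction of $\omega_{\pi}$ to fibre components and $\tilde{s}_k^{*}\omega_{\pi}=\psi_k$, both standard; and there is a logical slip in your order of argument, because you conclude $V=U$ from $\tilde{s}_k^{*}\Phi=U$ before knowing anything about $Z$, whereas the sections do meet the boundary components $\Pi^{\Lambda}$ with $k\in\Lambda$, so a priori $\tilde{s}_k^{*}Z\neq 0$.

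The genuine gap is that the one step you yourself call ``the only non-routine step'' --- the check over $\Delta_3$ --- is never carried out, and that is precisely where the content of the lemma lies; moreover, if you carry it out with the data you already quote, your intended conclusion $Z=0$ fails. Over a general point of $\Delta^{\Lambda}$ with $\#\Lambda=3$, write the fibre as $\PP_1\cup R\cup\PP_2$ with $\PP_1\subset\Pi^{\Lambda}$, $\PP_2\subset\Pi^{\Lambda^c}$. On the middle component $R$ the class $\mathcal{O}(D_k+\pi^{*}E_k)$ has degree $-1$ (Lemma \ref{lemma1} with $i=1$), while $-\omega_{\pi}+\Pi_2+\Pi_3$ has degree $0+0+\Pi^{\Lambda}\cdot R=1$; since a pullback from $B$ restricts to degree $0$ on every component, the difference is not a pullback and $Z\neq 0$. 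In fact the full component degrees are $(1,-1,2)$ for $\mathcal{O}(D_k+\pi^{*}E_k)$, $(1,0,1)$ for $-\omega_{\pi}$, $(-1,1,0)$ for $\Pi_3$ and $(0,0,0)$ for $\Pi_2$, so the difference restricts with degrees $(1,-2,1)$, which are exactly the degrees of $R^{\Lambda}$ ($R\cdot\PP_1=1$, $R^2=-2$, $R\cdot\PP_2=1$); hence $Z=R$. The fibrewise identity therefore holds for $N=\mathcal{O}(D_k+\pi^{*}E_k-R)$, whose boundary degrees $(0,1,1)$ are recorded after (9), that is, $D_k+E_k-R\sim-\omega_{\pi}+\Pi_2+\Pi_3+\pi^{*}(U)$ (the section computation is unchanged because $\tilde{s}_k^{*}R=0$), and this is also the form in which the lemma is actually invoked in the proof of Lemma \ref{Qlemma}, where $N=\omega_{\pi}^{-1}\otimes\mathcal{O}(\Pi_2+\Pi_3)\otimes\mathcal{O}(\pi^{*}U)$ is substituted. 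So, as written, your plan cannot be completed for the displayed class: the deferred verification is not a routine afterthought but the place where the extra $R$-term has to be discovered and accounted for, and a complete write-up must do this explicitly.
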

\begin{proof}
One checks that $-\omega_{\pi}+\Pi_2+\Pi_3$ and $D_k+E_k$ have the same restriction to fibres of $\pi$.
We have $s_k^*(-\omega_{\pi}+\Pi_2+\Pi_3)=-\psi_k+\Delta_2(k^{+})+\Delta_3(k^{+})$
and $s_k^*(D_k+E_k)=\psi_k/2$.
\end{proof}

Let $Q$ be the image of $\phi: \tilde{\PP} \to {\PP}({\EE})$, see Proposition 
\ref{geometric-meaning}. The map $\phi $ is the composition of a map 
$\phi': \tilde{\PP}\to Q$ with the inclusion map $\iota: Q \hookrightarrow {\PP}(\EE)$. 
The generic fibre of $Q\to B$ is a conic, hence
$\mathcal{O}(Q)=\mathcal{O}(2)\otimes \mathcal{O}(u^*A)$
for some divisor $A$ on $B$. We determine $A$.
\begin{lemma}\label{Qlemma}
On ${\PP}({\EE})$ we have the linear equivalence 
$$
[Q] \sim [\mathcal{O}(2)]+u^*(4\, \lambda -(\Delta_2+\Delta_3+\Delta_4))\, .
$$
\end{lemma}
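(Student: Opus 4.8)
The plan is to determine $A$ by intersection theory on ${\PP}({\EE})$, reducing it by push-forward to the Chern-class computation already carried out for $c_1(\pi_*N)$. Since ${\EE}$ has rank $3$, the hyperplane class $h:=c_1(\mathcal{O}_{{\PP}({\EE})}(1))$ satisfies $u_*(h^2)=1$, $u_*(h)=u_*(1)=0$, and the Grothendieck relation $\sum_{i}(-1)^i h^i c_{3-i}({\EE})=0$ gives $u_*(h^3)=c_1({\EE})=\lambda$. Hence from $[Q]=2h+u^*A$ we obtain $A=u_*([Q]\cdot h^2)-2\lambda$. Now by Proposition \ref{geometric-meaning} the map $\phi\colon\widetilde{\PP}\to{\PP}({\EE})$ restricts on a smooth fibre of $\pi$ to the embedding of ${\PP}^1$ as the conic, so $\phi$ is birational onto $Q$ and $[Q]=\phi_*[\widetilde{\PP}]$. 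Since $\phi^*h=c_1(N)$ (the morphism $\phi$ comes from the surjection $\pi^*{\EE}\to N$) and $u\circ\phi=\pi$, the projection formula yields $u_*([Q]\cdot h^2)=\pi_*(c_1(N)^2)$, so $A=\pi_*(c_1(N)^2)-2\lambda$.

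Next I would invoke the Grothendieck--Riemann--Roch identity already established in the proofs of Proposition \ref{codim2} and Lemma \ref{pistarN=E}: because $R^1\pi_*(N)=0$ and the fibres of $\pi$ have arithmetic genus $0$ (so the relevant Todd correction vanishes), one has $\lambda=c_1(\pi_*N)=\tfrac12\pi_*\bigl(c_1(N)^2-c_1(N)\,\omega_{\pi}\bigr)$. Combining this with the previous step collapses the answer to the clean formula $A=\pi_*\bigl(c_1(N)\cdot\omega_{\pi}\bigr)$.

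Now I substitute $c_1(N)=-\omega_{\pi}+\Pi_2+\Pi_3+\pi^*U-R$, which follows from Lemma \ref{Nformula} together with $N=M(-R)$ from (9), and evaluate the five push-forwards on $\widetilde{\PP}$. The class $\pi_*(\omega_{\pi}^2)$ is the relative $\kappa_1$ of a genus-$0$ fibration and therefore equals $-\delta$, the nodal locus counted with the number of nodes of the fibre; the general fibre over $\Delta_2$ and over $\Delta_4$ has one node and over $\Delta_3$ has two, whence $\pi_*(\omega_{\pi}^2)=-\Delta_2-2\Delta_3-\Delta_4$. Since $\omega_{\pi}$ restricts with degree $-1$ on each component $\Pi^{\Lambda}$ (one node) and degree $0$ on each $R^{\Lambda}$ (two nodes), we get $\pi_*(\Pi_2\,\omega_{\pi})=-\Delta_2$, $\pi_*(\Pi_3\,\omega_{\pi})=-\Delta_3$ and $\pi_*(R\,\omega_{\pi})=0$; and the projection formula with $\pi_*(\omega_{\pi})=-2[B]$ (the relative degree of $\omega_{\pi}$) gives $\pi_*(\pi^*U\cdot\omega_{\pi})=-2U$. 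Assembling, $A=(\Delta_2+2\Delta_3+\Delta_4)-\Delta_2-\Delta_3-2U=\Delta_3+\Delta_4-2U$. Finally, inserting $U=\tfrac1{14}\Delta_2+\tfrac37(\Delta_3+\Delta_4)$ and the relation (10), $\lambda=\tfrac3{14}\Delta_2+\tfrac27\Delta_3+\tfrac27\Delta_4$, one checks $\Delta_3+\Delta_4-2U=-\tfrac17\Delta_2+\tfrac17\Delta_3+\tfrac17\Delta_4=4\lambda-(\Delta_2+\Delta_3+\Delta_4)$, which is the assertion.

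I expect the main obstacle to be the bookkeeping of the boundary push-forwards on the not-quite-smooth model $\widetilde{\PP}$: one must check that in the factorization $\phi=\iota\circ\phi'$ the map $\phi'$ is genuinely birational, that $\widetilde{\PP}\to B$ is a family of arithmetic-genus-$0$ curves so that $\pi_*(\omega_{\pi}^2)=-\delta$, and that the $1/2$-conventions at $j=b/2=4$ are treated consistently in the definition of $\Delta_4$ and in the count of the nodal locus. Everything else is the routine Chern-class and projection-formula algebra sketched above.
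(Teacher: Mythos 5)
Your computation is correct and yields the right class, but by a genuinely different route than the paper. The paper argues by adjunction: it computes $\phi^*\mathcal{O}(Q)$ in two ways, once as $\phi^*\mathcal{O}(2)\otimes\pi^*\mathcal{O}(A)$ and once via $\omega_Q=\iota^*(\mathcal{O}(Q)\otimes\omega_{{\PP}({\EE})})$ combined with the blow-down relation $\omega_{\tilde{\PP}}=(\phi')^*\omega_Q\otimes\mathcal{O}(\Pi_2+\Pi_3)$, and then substitutes the expression for $N$; this gives the linear equivalence directly, with no push-forward or Riemann--Roch input, but it requires identifying the exceptional locus of $\phi'$ and applying adjunction on the (singular) image $Q$. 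You instead cap with $h^2$ and push down: from $u_*(h^2)=1$, $u_*(h^3)=\lambda$, $[Q]=\phi_*[\tilde{\PP}]$, $\phi^*h=c_1(N)$ and $u\circ\phi=\pi$ you get $A=\pi_*(c_1(N)^2)-2\lambda$, and the identity $\lambda=c_1(\pi_*N)=\tfrac12\pi_*(c_1(N)^2-c_1(N)\,\omega_\pi)$, already available from Proposition \ref{codim2} and Lemma \ref{pistarN=E}, collapses this to $A=\pi_*(c_1(N)\,\omega_\pi)$, which you evaluate by fibre-degree bookkeeping. All your push-forwards are right: $\omega_\pi$ has degree $-1$ on the $\Pi$-components and $0$ on $R$, $\pi_*(\omega_\pi^2)=-\Delta_2-2\Delta_3-\Delta_4$ for the genus-zero fibration, $\pi_*(\omega_\pi)=-2[B]$, and the final arithmetic with (10) checks out. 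Your approach buys independence from the blow-down discrepancy and from adjunction on $Q$ (only generic injectivity of $\phi$ and $\phi^*\mathcal{O}(1)=N$ are needed); the paper's buys a proof free of Grothendieck--Riemann--Roch and of the node counts.

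One wrinkle to record: the expression you substitute, $c_1(N)=-\omega_\pi+\Pi_2+\Pi_3+\pi^*U-R$, comes from reading Lemma \ref{Nformula} literally for $D_k+E_k$ and then subtracting $R$. In fact the displayed equivalence there is the one satisfied by $N=\mathcal{O}(D_k+\pi^*E_k-R)$ itself rather than by $M$: on the general fibre over $\Delta_3$ the class $-\omega_\pi+\Pi_2+\Pi_3$ restricts with degrees $(0,1,1)$, which are the degrees of $N$ recorded after (9), whereas $M$ restricts with degrees $(1,-1,2)$; accordingly the paper's own proof of Lemma \ref{Qlemma} uses $N=\omega_\pi^{-1}\otimes\mathcal{O}(\Pi_2+\Pi_3)\otimes\mathcal{O}(U)$. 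So your formula for $c_1(N)$ carries a spurious $-R$. This is harmless in your argument, precisely because the only way it enters is through $\pi_*(R\,\omega_\pi)$, which vanishes since $\omega_\pi$ is trivial on $R$; but note that if you had evaluated $\pi_*(c_1(N)^2)$ directly with that expression, the extra term would have contributed $-4\Delta_3$ and spoiled the answer, so the correction is worth making explicit.
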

\begin{proof}
We have $\omega_{\PP({\EE})}\otimes u^*(\omega_B^{-1})=\mathcal{O}(-3)\otimes u^*(\det{\EE})$
and by adjunction $\omega_Q=\iota^*(\mathcal{O}(Q) \otimes \omega_{\PP({\EE})})$.
Since $\phi'$ is a blow down we have $\omega_{\tilde{\PP}}=(\phi')^*\omega_Q
\otimes \mathcal{O}(\Pi_2+\Pi_3)$.
We get
$$
\begin{aligned}
\phi^*(\mathcal{O}(Q))=& {\phi'}^*\omega_Q \otimes \phi^*\omega^{-1}_{\PP({\EE})}\\
=& \omega_{\tilde{\PP}}\otimes \mathcal{O}(-\Pi_2-\Pi_3) \otimes \phi^*\mathcal{O}(3) 
\otimes \pi^*\det({\EE})^{-1} \otimes \pi^* \omega_B^{-1} \\
=&\omega_{\pi} \otimes \mathcal{O}(-\Pi_2-\Pi_3) \otimes 
\phi^*\mathcal{O}(3) \otimes \pi^* \det({\EE})^{-1}\, . \\
\end{aligned}
$$
On the other hand we have
$
\phi^*\mathcal{O}(Q) =\phi^*\mathcal{O}(2) \otimes \mathcal{O}(\pi^*A)
$
and $\phi^*\mathcal{O}(1)=N$, hence we get
$$
\mathcal{O}(u^*A)=
N \otimes \omega_{\pi} \otimes \mathcal{O}(-\Pi_2-\Pi_3) \otimes \pi^* \det({\EE})^{-1} \, . \eqno(11)
$$
By Lemma \ref{Nformula} we have $N = \omega_{\pi}^{-1} \otimes \mathcal{O}(\Pi_2+\Pi_3)
\otimes {O}(U)$.
Substituting this in (11) we get the desired result.
\end{proof}
The effective divisor $Q$ yields a modular form and Lemma \ref{Qlemma}
gives its weight.
\begin{corollary}\label{chi204}
The effective divisor $Q$ on ${\PP}({\EE})$ 
defines a modular form $\chi_{2,0,4}$ on $\tilde{\mathcal{H}}_{3,2}$ of weight $(2,0,4)$, 
that is,
a non-zero section of ${\rm Sym}^2({\EE}) \otimes \det({\EE})^4$.
\end{corollary}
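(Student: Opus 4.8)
The plan is to run the by-now familiar push-forward argument: identify the effective divisor $Q$ on ${\PP}({\EE})$, record its class using Lemma \ref{Qlemma}, push forward the tautological section, and check the result is non-zero. First I would note that $Q$ is effective since it is the image of $\tilde{\PP}$ under the morphism $\phi$ of Section \ref{rationalnormalcurve}, which is defined because $N$ is base point free and $\pi_*(N)={\EE}$ up to codimension $2$ (Lemma \ref{pistarN=E}); being the image of an irreducible variety dominating $B$, it is a genuine effective divisor on ${\PP}({\EE})$. The natural section $1$ of $\mathcal{O}(Q)$ then lives in $H^0({\PP}({\EE}),\mathcal{O}(Q))$, and by Lemma \ref{Qlemma} we have $[Q]=[\mathcal{O}(2)]+u^*(4\lambda-\Delta_2-\Delta_3-\Delta_4)$.

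Next I would apply $u_*$. Using $u_*(\mathcal{O}(2))={\rm Sym}^2({\EE})$ and the projection formula, the section $1$ of $\mathcal{O}(Q)$ pushes down to a section of $u_*(\mathcal{O}(2)\otimes u^*\mathcal{O}(4\lambda-\Delta_2-\Delta_3-\Delta_4))={\rm Sym}^2({\EE})\otimes\det({\EE})^4\otimes\mathcal{O}(-\Delta_2-\Delta_3-\Delta_4)$ on $\tilde{\mathcal{H}}_{3,2}$; in particular this is a regular section of ${\rm Sym}^2({\EE})\otimes\det({\EE})^4$ vanishing on the boundary divisors $\Delta_2,\Delta_3,\Delta_4$. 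This is by definition a modular form $\chi_{2,0,4}$ of weight $(2,0,4)$, where I should add a remark that the subscripts are chosen to match the ${\rm GL}(3)$-notation: ${\rm Sym}^2$ corresponds to the partition $(2,0,0)$ and $\det^4$ contributes the additional $4$ to each entry, i.e. weight $(6,4,4)=(2,0,0)+(4,4,4)$, abbreviated $(2,0,4)$.

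The one genuinely non-formal point is non-vanishing: I must rule out that $u_*(1)$ is identically zero, which a priori could happen if $Q$ contained a whole fibre of $u$ (equivalently if $[Q]-[\mathcal{O}(2)]$ failed to be represented by a pullback, contradicting Lemma \ref{Qlemma}) or if the generic fibre of $Q\to B$ were the full ${\PP}({\EE}_b)={\PP}^2$. But by Proposition \ref{geometric-meaning} the generic fibre of $Q$ over $B$ is a rational normal curve of degree $g-1=2$ in ${\PP}^2$, i.e. an honest conic, so $Q$ does not contain a fibre of $u$; hence on a generic fibre the section $1$ restricts to the defining equation of that conic, which is a non-zero element of ${\rm Sym}^2({\EE}_b)$. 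Therefore $u_*(1)\neq 0$. Alternatively, and perhaps cleaner to state, one can invoke Proposition \ref{binary_octic}: the restriction to the hyperelliptic locus of $\chi_{4,0,8}$ is non-zero by \cite[Lemma 7.7]{CFG2}, and the square of $\chi_{2,0,4}$ maps to $\chi_{4,0,8}$ up to a boundary twist under the natural multiplication ${\rm Sym}^2\otimes{\rm Sym}^2\to{\rm Sym}^4$ composed with $p_{8,0}$, so $\chi_{2,0,4}$ cannot vanish. I expect the main obstacle to be precisely this bookkeeping — matching the boundary twist coming from $\mathcal{O}(-\Delta_2-\Delta_3-\Delta_4)$ against the square-root relation with $\chi_{4,0,8}$ — rather than anything in the divisor class computation, which is already done in Lemma \ref{Qlemma}.
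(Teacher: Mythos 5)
Your argument is correct and is essentially the paper's own: the corollary is deduced directly from Lemma \ref{Qlemma} by pushing forward the section $1$ of $\mathcal{O}(Q)$ via $u_*(\mathcal{O}(2))={\rm Sym}^2({\EE})$ and the projection formula, giving a section of ${\rm Sym}^2({\EE})\otimes\det({\EE})^4$ vanishing on the boundary. Your extra care about non-vanishing (the generic fibre of $Q$ is a genuine conic by Proposition \ref{geometric-meaning}, so $Q$ contains no fibre of $u$) is a sound elaboration of what the paper leaves implicit, and your remark that $D=2Q$ links it to $\chi_{4,0,8}$ matches the comment following the corollary in the text.
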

Since the divisor $\Delta_2+\Delta_3+\Delta_4$ is not a pull back
from the moduli space $\overline{\mathcal{H}}_3$,
the modular form does not descend to $\overline{\mathcal{H}}_3$.
Recall that the modular form $\chi_{4,0,8}$ restricted to the hyperelliptic locus 
was associated to a divisor $D$ that equals $2\, Q$.

\smallskip

\begin{remark} \label{remarkg=2}
In the same vein as above we can determine in
an alternative way the result of Proposition \ref{classofA}
on class of the closure $\overline{D}$ of 
the ramification divisor $D$ of the universal genus $2$ curve.
By the theory of admissible covers there is a natural map 
$\overline{\mathcal H}_{2,2}\to \overline{\mathcal{M}}_2$ with the 
property that the pull back of the Hodge bundle on $\overline{\mathcal{M}}_2$ 
is the Hodge bundle on $\overline{\mathcal H}_{2,2}$ associated 
to the corresponding family of admissible covers. Hence the pull back 
of the $\mathcal{O}(1)$ of the bundle ${\mathbb P}({\mathbb E}) \to \overline{\mathcal{M}}_2$ 
equals the $\mathcal{O}(1)$ of the bundle 
${\PP}({\EE}) \to \overline{\mathcal H}_{2,2}$. 
Let $\Sigma=\phi_*(\sum_{k=1}^6\tilde{S}_k) $, with $\phi: {\PP}\to {\PP}({\EE})$
the map defined in Section  \ref{rationalnormalcurve}.
By geometry, the pull back of $\overline{D}$ to the bundle ${\PP}({\EE})$
over $\overline{\mathcal H}_{2,2}$ equals $\Sigma$. By Remark  \ref{remark-blowdown}
we have $\phi^*\Sigma = \tilde{S}+2\, \Pi_2+6\, \Pi_3$. By using the formulas of Section  \ref{symmetrization}, we have for $g=2$:
$$
\phi^*\mathcal{O}(6)= \tilde{M}-6\, R=\tilde{S}+\frac{12}{5}\Pi_2+\frac{2}{5}\Pi_2^c+\frac{24}{5}\Pi_3-\frac{3}{5} R\, .
$$
We now write $[\overline{D}]=\mathcal{O}(6)+u^*(a\, \delta_0+b\, \delta_1)$.
By pulling back to $\tilde{\PP}$ and using the above formulas, we get (we refer to the diagram in
Section \ref{rationalnormalcurve} for notation)
$$
\tilde{S}+\frac{12}{5}\Pi_2+\frac{2}{5}\Pi_2^c+\frac{24}{5}\Pi_3-\frac{3}{5}R+
\pi^*(2 a\, \Delta_0+b\, \Delta_3)
= \tilde{S}+2\, \Pi_2+6\, \Pi_3 \, .
$$
This implies
$$
\pi^*(2a\, \Delta_0+b\, \Delta_3)=-\frac{2}{5}(\Pi_2+\Pi_2^c)+\frac{3}{5}(2\, \Pi_3+R)
=\pi^*(-\frac{2}{5}\Delta_2+\frac{3}{5}\Delta_3 )\, ,
$$
hence $a=-1/5$ and $b=3/5$ and the result follows by using the formula
$10\, \lambda= \delta_0+2\,\delta_1$.
\end{remark}
\end{section}
\begin{section}{Comparison with the Hodge Bundle}

We know by Lemma \ref{pistarN=E} that the line bundle $N=\mathcal{O}_{\widetilde{\PP}}(D_k+E_k-R)$ 
on $\tilde{\PP}$ over $\widetilde{\mathcal{H}}_{3,2}$ has the property that
$\pi_*(N)\cong {\EE}$ up to codimension $2$. 
We now deal with the push forward of the tensor powers of $N$.
\begin{lemma}
We have for $m \in {\ZZ}_{\geq 1}$
$$
c_1(\pi_*(N^{\otimes m}))=\frac{2\, m^2+m}{14} \, \Delta_2  + \frac{5\, m^2-m}{14} (\Delta_3+\Delta_4)\, .
$$
\end{lemma}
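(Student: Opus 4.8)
The plan is to compute $c_1(\pi_*(N^{\otimes m}))$ by Grothendieck--Riemann--Roch applied to $\pi\colon\widetilde{\PP}\to B$ and the line bundle $N^{\otimes m}$, exactly in the spirit of the proof of Proposition \ref{codim2}. First I would record that $R^1\pi_*(N^{\otimes m})=0$ for all $m\geq 1$: by Lemma \ref{lemma1} and the discussion around (9), $N$ is base point free and restricts to every fibre of $\pi$ with all component degrees non-negative --- as $\mathcal{O}(g-1)=\mathcal{O}(2)$ on a smooth fibre, and with multidegrees $(i,g-i-1)$ resp.\ $(i-1,1,g-i-1)$ on the degenerate fibres over $\Delta_{2i+2}$ resp.\ $\Delta_{2i+1}$, which for $g=3$ are $(0,2),(1,1)$ and $(0,1,1)$ --- so the same holds for $N^{\otimes m}$ and all its higher direct images vanish. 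Hence $\pi_!(N^{\otimes m})=\pi_*(N^{\otimes m})$ in $K$-theory, and taking the codimension-one part of GRR and using $\pi_*({\rm Td}^{\vee}_2(\omega_{\pi}))=0$ we get
\[
c_1\bigl(\pi_*(N^{\otimes m})\bigr)=\tfrac{1}{2}\,\pi_*\!\bigl(m^2\,c_1(N)^2-m\,c_1(N)\,\omega_{\pi}\bigr)=m^2P-mQ,
\]
where $P=\tfrac{1}{2}\pi_*(c_1(N)^2)$ and $Q=\tfrac{1}{2}\pi_*(c_1(N)\,\omega_{\pi})$ are divisor classes on $B$ independent of $m$. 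In particular the left side is automatically a quadratic polynomial in $m$ with no constant term, matching the shape of the asserted formula.

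It then suffices to identify $P$ and $Q$. One linear combination is free: for $m=1$, Lemma \ref{pistarN=E} together with formula (10) gives $P-Q=c_1(\pi_*N)=\lambda=\tfrac{3}{14}\Delta_2+\tfrac{2}{7}(\Delta_3+\Delta_4)$, the $m=1$ value of the claimed expression. For the second combination I would compute $Q$, i.e.\ $\pi_*(c_1(N)\,\omega_{\pi})$, directly. By Lemma \ref{Nformula} we have $c_1(N)=-\omega_{\pi}+\Pi_2+\Pi_3-R+\pi^*U$ with $U=\tfrac{1}{14}\Delta_2+\tfrac{3}{7}(\Delta_3+\Delta_4)$, so
\[
c_1(N)\,\omega_{\pi}=-\omega_{\pi}^2+(\Pi_2+\Pi_3)\,\omega_{\pi}-R\,\omega_{\pi}+\pi^*U\cdot\omega_{\pi}.
\]
Here $\pi_*(R\,\omega_{\pi})=0$ because $\omega_{\pi}$ is trivial on the $(-2)$-curve $R$ (as already used in the proof of Lemma \ref{pistarN=E}), and $\pi_*(\pi^*U\cdot\omega_{\pi})=U\cdot\pi_*(\omega_{\pi})=-2U$ since the relative dualizing sheaf of the genus-$0$ family $\pi$ has relative degree $-2$. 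The remaining pieces $\pi_*(\omega_{\pi}^2)$, $\pi_*(\Pi_2\,\omega_{\pi})$ and $\pi_*(\Pi_3\,\omega_{\pi})$ are evaluated fibrewise from the restriction of $\omega_{\pi}$ to the boundary components $\Pi^{\Lambda},\Pi^{\Lambda^c}$ of the special fibres (of the types drawn in Figures 1 and 2) via adjunction, together with $\pi_*(\Pi^{\Lambda})=\pi_*(\Pi^{\Lambda^c})=\pi_*(R^{\Lambda})=0$ and the intersection data for these components recorded in \cite{vdG-K2} --- the same bookkeeping as in the proof of Proposition \ref{codim2}, specialised to $g=3$ via (4) and (5). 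Collecting terms yields $Q=\tfrac{1}{14}(\Delta_3+\Delta_4-\Delta_2)$, whence $P=Q+\lambda=\tfrac{1}{7}\Delta_2+\tfrac{5}{14}(\Delta_3+\Delta_4)$ and
\[
c_1\bigl(\pi_*(N^{\otimes m})\bigr)=m^2P-mQ=\tfrac{2m^2+m}{14}\,\Delta_2+\tfrac{5m^2-m}{14}\,(\Delta_3+\Delta_4).
\]

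Alternatively one can bypass $\pi_*(\omega_{\pi}^2)$ and compute both $\pi_*(c_1(N)^2)$ and $\pi_*(c_1(N)\,\omega_{\pi})$ from the explicit expressions for $D_k$ and $E_k$ displayed before Lemma \ref{Nformula} together with the intersection formulas on $\widetilde{\PP}$, along the lines of (7)--(8); the two routes cross-check against $P-Q=\lambda$. The only real obstacle is the boundary bookkeeping: tracking the fibre components $\Pi^{\Lambda},\Pi^{\Lambda^c},R^{\Lambda}$ over $\Delta_2,\Delta_3,\Delta_4$, their self-intersections and their intersections with $\omega_{\pi}$ and with classes pulled back from $B$, handling the $(k^{+})/(k^{-})$ splitting correctly in the alternative route, and remembering the symmetrisation factor $\tfrac{1}{2}$ attached to $\Delta_{b/2}=\Delta_4$. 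The conceptual content --- Grothendieck--Riemann--Roch plus the vanishing of $R^1\pi_*$ --- is immediate.
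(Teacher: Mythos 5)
Your proposal is correct and rests on the same engine as the paper's proof: apply Grothendieck--Riemann--Roch to $\pi$ and $N^{\otimes m}$, use $R^1\pi_*(N^{\otimes m})=0$ (your fibrewise degree argument for this is a nice addition; the paper just asserts the vanishing), and read off the degree-one part. Where you diverge is in how the two coefficients of the quadratic $m^2P-mQ$ are pinned down: the paper substitutes its formulas (7) and (8) for $\pi_*(D_k\omega_\pi)$ and $\pi_*(D_k^2)$ together with the explicit $E_k$, whereas you get one relation for free from $m=1$ via Lemma \ref{pistarN=E} and (10) ($P-Q=\lambda$) and compute only $Q=\tfrac12\pi_*(c_1(N)\,\omega_\pi)$, using the cleaner expression of Lemma \ref{Nformula}. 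This is a legitimate and somewhat more economical organization, and your final values $Q=\tfrac{1}{14}(\Delta_3+\Delta_4-\Delta_2)$, $P=\tfrac17\Delta_2+\tfrac{5}{14}(\Delta_3+\Delta_4)$ are correct (they agree with what (7), (8) and the $g=3$ formula for $\psi_k$ give). The one imprecision is your description of $\pi_*(\omega_\pi^2)$ as "evaluated fibrewise via adjunction": unlike $\pi_*(\Pi_2\,\omega_\pi)=-\Delta_2$, $\pi_*(\Pi_3\,\omega_\pi)=-\Delta_3$ and $\pi_*(R\,\omega_\pi)=0$, which really are fibrewise degrees times boundary classes, $\pi_*(\omega_\pi^2)$ is a global ($\kappa$-type) class and cannot be read off from restrictions to fibre components. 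For this genus-$0$ family it equals minus the push-down of the nodal locus, namely $-(\Delta_2+2\,\Delta_3+\Delta_4)$ (the factor $2$ over $\Delta_3$ coming from the two nodes of the chain after resolving the $A_1$-singularity); one gets this e.g.\ from GRR applied to $\mathcal{O}_{\tilde{\PP}}$, or by expressing $\omega_\pi$ through sections, $\psi$-classes and boundary as in the sources you cite. With that value your computation indeed returns $Q=\tfrac{1}{14}(\Delta_3+\Delta_4-\Delta_2)$, and your fallback route (computing both pushforwards directly from $D_k+E_k-R$ as in (7)--(8)) is exactly the paper's argument, so the proof goes through.
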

\begin{proof}
We apply Grothendieck-Riemann-Roch to $\pi$ and $N^{\otimes m}$
as in  (10) in the proof of Proposition \ref{codim2}.
Recall that $N$ corresponds to the divisor(class) $D_k+E_k-R$.
We use that $R^1\pi_* N^{\otimes m}=0$ for all $m$ and find
$$
c_1(\pi_*(N^{\otimes m}))=\frac{1}{2} \pi_*\left( m^2 (D_k+E_k-R)^2 -m\,  \omega_{\pi} \cdot (D_k+E_k-R) \right)
$$
and using the relations (8) and (9) of the proof of Proposition \ref{codim2} we get
$$
c_1(\pi_*(N^{\otimes m}))=\frac{2\, m^2+m}{14} \, \Delta_2  + \frac{5\, m^2-m}{14} (\Delta_3+\Delta_4)
$$
as required.
\end{proof}
\begin{proposition}\label{symm-2}
On $B$ we have the exact sequence
$$
0\to {\rm Sym}^{m-2}({\EE})\otimes \mathcal{O}(-A) \to {\rm Sym}^{m}({\EE}) \to \pi_*(N^{\otimes m}) \to 0 \, ,
$$
with $A=4\, \lambda -(\Delta_2+\Delta_3+\Delta_4)$.
\end{proposition}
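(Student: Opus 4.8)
The plan is to realise the sequence as the pushforward, under the projection $u\colon {\PP}({\EE})\to B$, of the ideal-sheaf sequence of the conic bundle $Q\subset {\PP}({\EE})$ twisted by $\mathcal{O}_{{\PP}({\EE})}(m)$, and then to identify the resulting quotient with $\pi_*(N^{\otimes m})$ via the birational contraction $\phi'\colon \tilde{\PP}\to Q$ of Section \ref{rationalnormalcurve}.

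First I would feed in Lemma \ref{Qlemma}, which gives $\mathcal{O}_{{\PP}({\EE})}(Q)\cong \mathcal{O}(2)\otimes u^*\mathcal{O}(A)$ with $A=4\,\lambda-(\Delta_2+\Delta_3+\Delta_4)$. Twisting the structure sequence of $Q$ in ${\PP}({\EE})$ by $\mathcal{O}_{{\PP}({\EE})}(m)$ then yields
$$
0\to \mathcal{O}_{{\PP}({\EE})}(m-2)\otimes u^*\mathcal{O}(-A)\to \mathcal{O}_{{\PP}({\EE})}(m)\to \mathcal{O}_Q(m)\to 0\, .
$$
Applying $u_*$: since $u$ is a ${\PP}^2$-bundle and $H^1$ of every line bundle on ${\PP}^2$ vanishes, we have $R^1u_*\bigl(\mathcal{O}_{{\PP}({\EE})}(m-2)\otimes u^*\mathcal{O}(-A)\bigr)=0$, so the pushed-forward sequence stays exact on the right; and the projection formula together with $u_*\mathcal{O}_{{\PP}({\EE})}(j)={\rm Sym}^j({\EE})$ for $j\geq 0$ (and $=0$ for $j<0$) turns the first two terms into ${\rm Sym}^{m-2}({\EE})\otimes\mathcal{O}(-A)$ and ${\rm Sym}^{m}({\EE})$. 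This produces
$$
0\to {\rm Sym}^{m-2}({\EE})\otimes \mathcal{O}(-A)\to {\rm Sym}^{m}({\EE})\to u_*\mathcal{O}_Q(m)\to 0\, .
$$

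It remains to identify $u_*\mathcal{O}_Q(m)$ with $\pi_*(N^{\otimes m})$. Here I would use that $\phi$ factors as $\phi=\iota\circ\phi'$, where $\iota\colon Q\hookrightarrow {\PP}({\EE})$ is the inclusion and $u\circ\iota\circ\phi'=\pi$, and that $N=\phi^*\mathcal{O}_{{\PP}({\EE})}(1)=(\phi')^*\mathcal{O}_Q(1)$. Granting $\phi'_*\mathcal{O}_{\tilde{\PP}}=\mathcal{O}_Q$, the projection formula gives $\phi'_*(N^{\otimes m})=\phi'_*\bigl((\phi')^*\mathcal{O}_Q(m)\bigr)=\mathcal{O}_Q(m)$, whence
$$
\pi_*(N^{\otimes m})=(u\circ\iota)_*\,\phi'_*(N^{\otimes m})=(u\circ\iota)_*\,\mathcal{O}_Q(m)=u_*\mathcal{O}_Q(m)\, ;
$$
substituting this into the sequence above finishes the argument.

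The step I expect to be the main obstacle is the equality $\phi'_*\mathcal{O}_{\tilde{\PP}}=\mathcal{O}_Q$, i.e.\ the normality of $Q$. To establish it I would observe that $Q$, being a divisor in the smooth ${\PP}^2$-bundle ${\PP}({\EE})$, is a local complete intersection, hence Cohen--Macaulay, and that by Proposition \ref{geometric-meaning} its fibres over the generic points of $\Delta_2,\Delta_3,\Delta_4$ are reduced conics of rank two; thus the singular locus of $Q$ has codimension at least two, so $Q$ is regular in codimension one and therefore normal. Since $\tilde{\PP}$ is smooth and $\phi'$ is the blow-down used in the proof of Lemma \ref{Qlemma}, this gives $\phi'_*\mathcal{O}_{\tilde{\PP}}=\mathcal{O}_Q$. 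The remaining ingredients---the vanishing of $R^1u_*$ and the pushforward formulas for $\mathcal{O}_{{\PP}({\EE})}(j)$---are routine ${\PP}^2$-bundle facts. As a consistency check, for $m=1$ the sequence collapses to $\pi_*(N)\cong {\EE}$, in agreement with Lemma \ref{pistarN=E}, and for $m=2$ one recovers the value of $c_1(\pi_*(N^{\otimes 2}))$ computed in the preceding lemma.
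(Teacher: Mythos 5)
Your proposal is correct and follows essentially the same route as the paper: twist the structure sequence of $Q$ in ${\PP}({\EE})$ by $\mathcal{O}(m)$ using Lemma \ref{Qlemma}, push down by $u_*$ with the vanishing of $R^1u_*$, and identify the quotient with $\pi_*(N^{\otimes m})$; your normality argument for $Q$ (making $\phi'_*\mathcal{O}_{\tilde{\PP}}=\mathcal{O}_Q$ and hence $u_*\mathcal{O}_Q(m)=\pi_*(N^{\otimes m})$ explicit) just spells out a step the paper leaves implicit. One tiny inaccuracy: by Proposition \ref{geometric-meaning} the fibre over a generic point of $\Delta_2$ is a smooth conic (the degree-$0$ component is contracted to a point on it), not a rank-two conic, but since it is still reduced this only strengthens your codimension estimate.
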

\begin{proof}
By Lemma \ref{Qlemma} we have on ${\PP}({\EE})$ the exact sequence
$$
0 \to \mathcal{O}(m-2)\otimes u^*\mathcal{O}(-A) \to \mathcal{O}(m) \to \mathcal{O}(m)_{|Q}
\to 0
$$
Applying $u_*$ and observing that $R^1u_*\mathcal{O}(m-2)$ vanishes gives the result.
\end{proof}
A section of
${\rm Sym}^j({\EE})\otimes \det({\EE})^k$ over $\mathcal{H}_3$
pulls back to the stack $[W^0_{8,-2}/({\rm GL}(W)/(\pm 1_W))]$ 
as a section of ${\rm Sym}^j({\rm Sym}^2(W))\otimes \det(W)^{k/2}$ for even $k$. We have an
isotypical decomposition
$$
{\rm Sym}^j({\rm Sym}^2(W))=\oplus_{n=0}^{\lfloor j/2 \rfloor} {\rm Sym}^{2j-4n}(W)\otimes \det(W)^{2n} \, ,
$$
where we assume here and in the rest of this section 
that the characteristic is $0$ or not $2$ and 
high enough for this identity to hold (or use divided powers as in \cite[3.1]{AFPRW}).
A section of
${\rm Sym}^j({\EE})\otimes \det({\EE})^k$ over $\mathcal{M}_3^{nh}$ pulls back to 
$[V_{4,0,-1}/{\rm GL}(V)]$, where we now write $V$ for the standard space of dimension $3$.
An identification $V\cong {\rm Sym}^2(W)$ corresponds to an embedding ${\PP}^1 \hookrightarrow {\PP}^2$
with image a smooth quadric. If we view $V$ with basis $x,y,z$, the kernel of the projection 
$$
{\rm Sym}^j(V)={\rm Sym}^j({\rm Sym}^2(W)) \to {\rm Sym}^{2j}(W)
$$
consists of the polynomials of degree $j$ in $x,y,z$ that vanish on the quadric.
Thus in view of the isotypical decomposition above
the exact sequence
$$
0\to {\rm Sym}^{m-2}({\EE})\otimes \mathcal{O}(-A) \to {\rm Sym}^{m}({\EE}) \to \pi_*(N^{\otimes m}) \to 0 \, . 
$$
corresponds to (the pullback to $W_{8,-2}^0$ of) an exact sequence 
$$
0 \to \left({\rm Sym}^{m-2}({\rm Sym}^2W)\right) \otimes \det(W)^2 \to {\rm Sym}^{m}({\rm Sym}^2W) \to {\rm Sym}^{2m}(W) \to 0 \, . 
$$
The section $\chi_{4,0,8}$ of ${\rm Sym}^4({\EE})\otimes \det({\EE})^8$
restricted to the hyperelliptic locus  allows three projections according to the decomposition
$$
{\rm Sym}^4({\rm Sym}^2 W) \otimes \det(W)^{24} = W_{8,24} \oplus W_{4,26} \oplus W_{0,28}\, .
\eqno(12)
$$
\begin{lemma}\label{3summands} 
The projections to the three summands in (12) of the pull back of $\chi_{4,0,8}$ 
to $\mathcal{H}_{3,2}$ define modular forms on $\overline{\mathcal{H}}_{3,2}$ of weights
$(4,0,8)$, $(2,0,4)$ and $(0,0,14)$ and these are up to a scalar given by 
the covariants $f_{8,-2}\, \mathfrak{d}$, $f_{4,-1} \, \mathfrak{d}$ 
and the discriminant $\mathfrak{d}$.
\end{lemma}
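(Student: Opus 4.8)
The plan is to combine three facts: the geometry of the divisor $Q$ (which is $D/2$ on the hyperelliptic locus, by Corollary \ref{chi204} and the remark following it), the weight computations already carried out, and the non-vanishing input from \cite[Lemma 7.7]{CFG2}. First I would observe that the three projections are, by the isotypical decomposition (12) together with Proposition \ref{symm-2} applied with $m=4$, exactly the successive graded pieces of $\chi_{4,0,8}|_{\mathcal H_{3,2}}$ under the filtration whose quotients are $\pi_*(N^{\otimes 4})$, $\pi_*(N^{\otimes 2})\otimes\det(\EE)^{\cdot}$, and a line bundle; concretely the top projection to $W_{8,24}$ is the image in $\mathrm{Sym}^{8}(W)\otimes\det(W)^{12}$, which is the covariant of binary octics computed in Proposition \ref{binary_octic} to be a multiple of $f_{8,-2}\,\mathfrak d$. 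This settles the first summand.

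For the remaining two summands the idea is to use weights and divisibility. Each projection is a section of $\EE_\rho$ for the appropriate $\rho$, hence a modular form on $\widetilde{\mathcal H}_{3,2}$; pulling back to the stack quotient $[W^0_{8,-2}/(\mathrm{GL}(W)/\pm 1_W)]$ it becomes a covariant of binary octics of the bidegree dictated by the weight. The three weights $(4,0,8)$, $(2,0,4)$, $(0,0,14)$ give covariants of bidegrees $(8,12)$, $(4,6)$, $(0,14)$ respectively (order $2j-4n$ in $W$, degree $3k/2$ in the coefficients, matching the $\det(W)$-exponents $24/2,\,\ldots$ after accounting for the $\det(W)^{-2}$ twist as in Proposition \ref{binary_octic}). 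Now I would argue that each of these covariants vanishes on the discriminant locus: since $Q|_{\mathcal H_3}$ is the reduced conic and $D=2Q$, the divisor $D$ — equivalently $\chi_{4,0,8}|_{\mathcal H_{3,2}}$, whose boundary vanishing was established when $\chi_{4,0,8}$ was constructed — is divisible by the discriminant $\mathfrak d$ (a form of weight $28$ on $\overline{\mathcal H}_{3,2}$, by Remark \ref{weight14}). Hence each projection is divisible by $\mathfrak d$, and the quotients are covariants of bidegrees $(4,-1)$, $(0,0)$; the latter is the unique (up to scalar) invariant of degree $0$, i.e.\ a constant, so the $W_{0,28}$-projection is a scalar multiple of $\mathfrak d$ itself. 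For the $(4,-1)$ quotient one uses that $f_{4,-1}$ is, up to scalar, the unique covariant of binary octics of that bidegree — this is the classical dimension count for covariants of binary octics (equivalently, $I_0(4,-1)$ of the appropriate graded piece is one-dimensional) — so the $W_{4,26}$-projection is a multiple of $f_{4,-1}\,\mathfrak d$.

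The last thing to check is that none of these three projections vanishes identically; otherwise the statement ``define modular forms'' would be vacuous for that summand. Here I would invoke \cite[Lemma 7.7]{CFG2}, which says $\chi_{4,0,8}$ does not vanish identically on the hyperelliptic locus, together with Proposition \ref{binary_octic} for the top piece; for the middle and bottom pieces the cleanest route is to note that the full section $\chi_{4,0,8}|_{\mathcal H_{3,2}}$, divided by $\mathfrak d$, is a non-zero section of $\mathrm{Sym}^4(\EE)\otimes\det(\EE)^{-6}$, and to compute its three projections directly on a generic binary octic $f$: the top one is $f_{8,-2}$ (up to scalar, by Proposition \ref{binary_octic}), the bottom one is the constant $1$, and the middle one is $f_{4,-1}$ — each manifestly non-zero. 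I expect the main obstacle to be precisely this explicit identification of the middle projection: matching the abstract graded piece coming from Proposition \ref{symm-2} with the concrete covariant $f_{4,-1}$ requires pinning down that the relevant space of covariants of binary octics is one-dimensional in that bidegree, which is a representation-theoretic (plethysm) computation valid under the characteristic hypothesis already assumed in this section.
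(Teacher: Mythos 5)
Your treatment of the first and third summands essentially reproduces the paper's: the $W_{8,24}$-piece is Proposition \ref{binary_octic}, and for the $W_{0,28}$-piece your ``quotient is a degree-zero invariant, hence constant'' is a harmless variant of the paper's ``it descends to $\overline{\mathcal{H}}_3$ and does not vanish on $\mathcal{H}_3$, hence is a multiple of $\mathfrak{d}$''. The genuine gap is the middle summand. You identify the $W_{4,26}$-projection by claiming (i) it is divisible by $\mathfrak{d}$ and (ii) the quotient lies in a space of covariants of binary octics of ``bidegree $(4,-1)$'' that is one-dimensional ``by the classical dimension count''. Claim (ii) is not a standard fact and, as stated, is not even well posed: a polynomial covariant of the octic of order $4$ has a non-negative integral degree in the coefficients, and no such covariant occupies the position required here; the object $f_{4,-1}$ (the ``universal binary quartic'') is not a covariant of the binary octic alone, but only acquires meaning through the hyperelliptic degeneration --- in the paper, through writing a ternary quartic as $Q=mC+R$ relative to the conic $C$ and identifying the three projections with the restrictions $R\cap C$, which give the universal binary octic, the universal binary quartic and $1$ up to twisting. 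That geometric identification is the actual content of the paper's proof of Lemma \ref{3summands}, and it is exactly what your argument lacks: your fallback, ``compute the three projections directly on a generic binary octic: the top one is $f_{8,-2}$, the bottom one is $1$, the middle one is $f_{4,-1}$'', is circular, since that computation is the statement to be proved. (For the top piece the analogous uniqueness really is trivial --- an order-$8$ covariant of degree one in the coefficients is a multiple of $f$ itself, which is what Proposition \ref{binary_octic} uses --- but this does not transfer to the middle piece.)

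A smaller soft spot: you infer divisibility of all three projections by $\mathfrak{d}$ from ``$D=2Q$ and the boundary vanishing of $\chi_{4,0,8}$'', which conflates a statement about divisors with divisibility of a vector-valued section by the discriminant; and the non-vanishing of the middle and bottom projections (needed so the statement is not vacuous, and used to pin down the third piece) is asserted rather than derived --- in the paper it again comes out of the $Q=mC+R$ description together with \cite[Lemma 7.7]{CFG2}. So while the overall strategy (restrict, project, use $\mathfrak{d}$-divisibility, weights and non-vanishing) is the paper's, the key identification step for the $W_{4,26}$-summand is not justified in your write-up.
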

\begin{proof}
The identification of ${\EE}$ with ${\rm Sym}^2(W)$ corresponds to the embedding of ${\PP}^1$
as a conic $C$ in ${\PP}^2$. A ternary quartic $Q$ contains $C$ either $0$, $1$ or $2$ times,
say $Q=mC+R$ with $0\leq m \leq 2$. 
The three projections correspond to $R \cap C$ and give the
universal binary octic, the universal binary quartic and $1$ up to twisting. The first projection was
identified in Proposition \ref{binary_octic}. The argument for the second is similar, while the third
descends to $\overline{\mathcal{H}}_3$ and does not vanish on $\mathcal{H}_3$. Therefore it must be
a multiple of the disciminant. Taking into account
the action of ${\rm GL}_2/{\pm 1_W}$ we get the indicated weights (namely $2(14+\epsilon)$ with $\epsilon=-2,-1,0$).
\end{proof}
\end{section}
\begin{section}{More Modular Forms for Genus Three}\label{moregenus3}
We will use more effective divisors on projectivized Hodge bundles 
to produce more modular forms. 
Note that the connection between divisors on projectivized Hodge bundles and modular forms can also
be used in the other direction: obtaining results on cycle classes using modular forms.
We give a few examples.
To a canonical quartic plane curve $C$ we can associate a curve $\check{S}$  in the dual plane of lines intersecting
$C$ equianharmonically. It corresponds to a contravariant (concomitant)  $\sigma$ of the ternary quartic 
given 
by Salmon in \cite[p.\ 264]{Salmon}  and it is defined by
an equivariant ${\rm GL}(3)$ embedding 
$W[4,4,0] \hookrightarrow {\rm Sym}^2({\rm Sym}^4(W))$. It gives rise to a divisor in
${\PP}({\EE}^{\vee})$  and a modular form $\chi_{0,4,16}$ of weight $(0,4,16)$. 
We refer to \cite[p. 54]{CFG2} for the
relation between invariant theory of ternary quartics and modular forms. 
The Siegel modular form $\chi_{0,4,16}$ vanishes with order $2$ at infinity 
and order $4$ along the locus $\mathcal{A}_{2,1}$ of decomposable abelian threefolds. 
With $\check{u}: {\PP}({\EE}^{\vee})\to \overline{\mathcal{M}}_3$ the projection we have 
$\check{u}_*(\mathcal{O}_{{\PP}({\EE}^{\vee})}(1))={\EE}^{\vee}\cong
\wedge^2{\EE} \otimes \det({\EE})^{-1}$
and we 
thus find an effective divisor on ${\PP}({\EE}^{\vee})$ over $\tilde{\mathcal{A}}_3$ with class
$[\check{S}]= [\mathcal{O}_{{\PP}({\EE}^{\vee})}(4)]+20\, \lambda -2\, \delta$ and it vanishes
with multiplicity $4$ along $\mathcal{A}_{2,1}$. We thus find on ${\PP}({\EE}^{\vee})$ over $\overline{\mathcal{M}}_3$
a relation 
$$
[\check{S}]= [\mathcal{O}_{{\PP}({\EE}^{\vee})}(4)]+20\, \lambda -2\, \delta_0 - 4 \, \delta_1\, ,
$$
where we identify $\lambda$ and $\delta_i$ with their pullbacks to ${\PP}({\EE}^{\vee})$.
Similarly, in the dual plane we have the sextic $\check{T}$ of lines intersecting the quartic curve 
in a quadruple of points with 
$j$-invariant $1728$. The corresponding concomitant $\tau$ corresponds to 
$W[6,6,0] \hookrightarrow {\rm Sym}^3({\rm Sym}^4(W))$ and defines a
modular form of weight $(0,6,24)$ vanishing with multiplicity
$3$ at infinity and multiplicity $6$ along $\mathcal{A}_{2,1}$. We thus get a cycle relation
$$
[\check{T}]= [\mathcal{O}_{{\PP}({\EE}^{\vee})}(6)]+30\, \lambda -3\, \delta_0 - 6 \, \delta_1 \, .
$$
The concomitant $\sigma^3-27\, \tau^2$ vanishes on the locus of double conics and the 
corresponding modular form of weight $(0,12,48)$ is divisible by $\chi_{18}^2$ as can be checked using
the methods of \cite{CFG2}. 
Dividing by $\chi_{18}^2$ 
gives a cusp form of weight $(0,12,12)$ vanishing with multiplicity $2$ at infinity and
multiplicity $3$ along $\mathcal{A}_{2,1}$. It is classically known (see e.g. \cite[p.\ 43]{Clebsch}) that this concomitant 
defines the dual curve $\check{C}$ to the canonical image $C$ in ${\PP}({\EE})$.
We thus find an effective divisor in ${\PP}({\EE}^{\vee})$ containing the closure of the
dual curve with class
$$
12 \, [\mathcal{O}_{{\PP}({\EE}^{\vee})}(1)] +24\, \lambda - 2\, \delta_0 -3 \, \delta_1 \, .
$$
This effective divisor class can also be given by the cycle
$$
B=\{ (C,\eta)\in {\PP}({\EE}^{\vee}): \, \text{${\rm div}(\eta)$ has a point of multiplicity $2$} \}
$$
over $\mathcal{M}_3$ and
Korotkin and Zograf in \cite[Thm. 1]{KZ} determined the class of its closure $\overline{B}$
$$
[\overline{B}]= 12 \, [\mathcal{O}_{{\PP}({\EE}^{\vee})}(1)] +24\, \lambda - 2\, \delta_0 -3 \, \delta_1 \, .
$$
Another example of an effective divisor for genus $3$ 
is provided by the Weierstrass divisor $W$ with class
$$
[\overline{W}]= 24\, [\mathcal{O}_{{\PP}({\EE}^{\vee})}(1)]+68\, \lambda -6\, \delta_0 -12 \, \delta_1 
$$
as given by Gheorghita in \cite{Gheorghita}.
Here we get a section of
$$
{\rm Sym}^{24}(\wedge^2 {\EE}) \otimes \det({\EE})^{44}(-6\, \delta_0 -12 \, \delta_1)
$$
This gives  a Teichm\"uller modular form of weight $(0,24,44)$ vanishing with multiplicity $6$  at the cusp.
It descends to a Siegel modular form.
\begin{corollary} The dual of the canonical curve
defines a Siegel modular cusp form of degree $3$ of weight $(0,12,12)$ vanishing with multiplicity
$2$ at infinity. The Weierstrass divisor defines a cusp form of weight $(0,24,44)$ vanishing with multiplicity $6$ at infinity.
\end{corollary}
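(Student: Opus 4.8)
The plan is to apply to the two effective divisors on ${\PP}({\EE}^{\vee})$ recorded above the same push-forward recipe that produced $\chi_{6,8}$ and $\chi_{4,0,8}$. Write $\check{u}\colon {\PP}({\EE}^{\vee})\to\overline{\mathcal{M}}_3$ for the projection, so that $\check{u}_*(\mathcal{O}_{{\PP}({\EE}^{\vee})}(m))={\rm Sym}^m({\EE}^{\vee})$ for $m\geq 1$. Starting from the closure $\overline{B}$ of the dual of the canonical curve, with $[\overline{B}]=12\,[\mathcal{O}_{{\PP}({\EE}^{\vee})}(1)]+24\,\lambda-2\,\delta_0-3\,\delta_1$, the projection formula applied to the section $1$ of $\mathcal{O}(\overline{B})$ yields a section of ${\rm Sym}^{12}({\EE}^{\vee})\otimes\det({\EE})^{24}$ vanishing to order $2$ along $\Delta_0$ and order $3$ along $\Delta_1$; it is nonzero since $\overline{B}$ contains no fibre of $\check{u}$. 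Starting instead from Gheorghita's class $[\overline{W}]=24\,[\mathcal{O}_{{\PP}({\EE}^{\vee})}(1)]+68\,\lambda-6\,\delta_0-12\,\delta_1$ of the Weierstrass divisor, the same recipe gives a nonzero section of ${\rm Sym}^{24}({\EE}^{\vee})\otimes\det({\EE})^{68}$ vanishing to order $6$ along $\Delta_0$ and order $12$ along $\Delta_1$.

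Next I would relabel these in terms of ${\rm GL}(3)$-weights. Since $g=3$ we have ${\EE}^{\vee}\cong\wedge^2{\EE}\otimes\det({\EE})^{-1}$, hence ${\rm Sym}^{m}({\EE}^{\vee})\cong{\rm Sym}^{m}(\wedge^2{\EE})\otimes\det({\EE})^{-m}$. Putting $m=12$ rewrites the first section as a section of ${\rm Sym}^{12}(\wedge^2{\EE})\otimes\det({\EE})^{12}$, a Teichm\"uller modular form of weight $(0,12,12)$; putting $m=24$ rewrites the second as a section of ${\rm Sym}^{24}(\wedge^2{\EE})\otimes\det({\EE})^{44}$, a Teichm\"uller modular form of weight $(0,24,44)$. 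The vanishing orders along $\Delta_0$ and $\Delta_1$ are untouched by this relabelling.

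Finally I would descend both forms along the Torelli morphism to $\tilde{\mathcal{A}}_3$, exactly as for $\chi_{4,0,8}$. In each case the weight has the shape ${\rm Sym}^{m}(\wedge^2{\EE})\otimes\det({\EE})^{k}$ with $k$ even, so $-1_{\EE}$ acts trivially and the form descends to a section over the Torelli image of $\overline{\mathcal{M}}_3$; the complement of this image in $\mathcal{A}_3$, the locus of decomposable abelian threefolds, has codimension $2$, so by Hartogs the section extends over $\mathcal{A}_3$ and by the Koecher principle over $\tilde{\mathcal{A}}_3$. Vanishing along $\Delta_0$ translates, through the dictionary already used in this paper, into vanishing along the boundary at infinity of $\tilde{\mathcal{A}}_3$, so each form is a cusp form, vanishing at infinity with multiplicity $2$ for the dual curve and $6$ for the Weierstrass divisor. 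The only delicate point is that $[\overline{B}]$ and $[\overline{W}]$ really are the displayed effective classes, so that the push-forwards are holomorphic on all of $\overline{\mathcal{M}}_3$ rather than merely meromorphic; but this is precisely the content of the cited results of Korotkin--Zograf and of Gheorghita, so the remaining work is only the bookkeeping of the weight relabelling together with the standard parity and codimension checks for the descent.
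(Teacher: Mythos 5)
Your argument is correct, and for the Weierstrass divisor it is exactly the paper's argument: push forward the section $1$ of $\mathcal{O}(\overline{W})$ using Gheorghita's class, rewrite ${\rm Sym}^{24}({\EE}^{\vee})\otimes\det({\EE})^{68}$ as ${\rm Sym}^{24}(\wedge^2{\EE})\otimes\det({\EE})^{44}$ via ${\EE}^{\vee}\cong\wedge^2{\EE}\otimes\det({\EE})^{-1}$, and descend. For the dual of the canonical curve, however, you take a genuinely different route from the paper. The paper does not start from the Korotkin--Zograf class: it builds the form by invariant theory, taking Salmon's concomitants $\sigma$ and $\tau$ (which give the forms $\chi_{0,4,16}$ and $\chi_{0,6,24}$ with known vanishing orders at infinity and along $\mathcal{A}_{2,1}$), forming the combination $\sigma^3-27\,\tau^2$ of weight $(0,12,48)$ that classically cuts out the dual curve, checking divisibility by $\chi_{18}^2$, and dividing; the class $12\,[\mathcal{O}_{{\PP}({\EE}^{\vee})}(1)]+24\,\lambda-2\,\delta_0-3\,\delta_1$ is then \emph{deduced} from this construction and only afterwards matched with Korotkin--Zograf. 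You instead import $[\overline{B}]$ from Korotkin--Zograf as a black box and run the same pushforward recipe as in the Weierstrass case. Both are legitimate; the paper's route buys an independent derivation of the cycle class, an explicit identification of the form with a classical concomitant, and the multiplicity $3$ along $\mathcal{A}_{2,1}$ as a consequence of the orders of $\chi_{0,4,16}$, $\chi_{0,6,24}$ and $\chi_{18}$, while your route is shorter and uniform across the two divisors at the cost of relying on the external class computation. One small point worth making explicit in your version: to get multiplicity exactly $2$ (resp.\ $6$) at infinity rather than at least $2$ (resp.\ $6$), note that $\overline{B}$ and $\overline{W}$, being closures of divisors over $\mathcal{M}_3$, contain no full preimage $\check{u}^{-1}(\Delta_i)$, so the pushed-forward sections do not vanish identically along any boundary divisor; your remark that no fibre of $\check{u}$ lies in the divisor is the fibrewise form of this and suffices, but it is doing that job, not just guaranteeing the section is nonzero.
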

\end{section}
\begin{section}{The hypertangent divisor}\label{hypertangentsection}
A generic canonically embedded
curve $C$ of genus $3$ has $24$ (Weierstrass) points where the tangent line
intersects $C$ with multiplicity $3$. The union of these $24$ lines forms a divisor
in ${\PP}^2$. Taking the closure of this divisor for the universal curve over $\mathcal{M}_3$
defines a divisor $H$ in ${\PP}({\EE})$ over $\overline{\mathcal{M}}_3$
which we call the hypertangent line divisor. 
We calculate the class of this divisor over $\overline{\mathcal{M}}_3$
and also calculate the class of a corresponding divisor over $\overline{\mathcal{H}}_{3,2}$. 

The calculation over $\overline{\mathcal{M}}_3$ 
uses the divisors $\check{S}$ and $\check{T}$ in ${\PP}({\EE}^{\vee})$ over $\overline{\mathcal{M}}_3$
as defined in the preceding section.
It is a classical result that the intersection $\check{S} \cdot \check{T}$ in the generic fibre is the $0$-cycle consisting of the $24$ 
points defining the $24$ hyperflexes of the generic curve $C$, see \cite{Salmon}. We consider the incidence variety
$$
I=\{ (p,\ell)\in {\PP}({\EE})\times_{\overline{\mathcal{M}}_3} {\PP}({\EE}^{\vee}): p\in \ell \}
$$
Let $\rho: I \to {\PP}({\EE})$ and $\check{\rho}: I\to {\PP}({\EE}^{\vee})$
be the two projections fitting in the commutative diagram
$$
\begin{xy}
\xymatrix{
I \ar[d]^{\rho} \ar[r]^{\check{\rho}} & {\PP}({\EE}^{\vee})\ar[d]^{\check{u}} \\
{\PP}({\EE}) \ar[r]^u & \overline{\mathcal{M}}_3 \\
}
\end{xy}
$$
We have the tautological sequence on ${\PP}({\EE})$
$$
0 \to F \to u^*({\EE}) \to \mathcal{O}_{{\PP}({\EE})}(1) \to 0
$$
and a similar one on ${\PP}({\EE}^{\vee})$
$$
0 \to \check{F} \to \check{u}^*({\EE}^{\vee}) \to \mathcal{O}_{{\PP}({\EE}^{\vee})}(1) \to 0\, .
$$
Now note that $I$ can be identified with the ${\PP}^1$-bundle ${\PP}({{F}^{\vee}})$ 
on ${\PP}({\EE})$,
but also with the ${\PP}^1$-bundle ${\PP}(\check{F}^{\vee})$ on ${\PP}({\EE}^{\vee})$.

The tautological inclusion $F \to u^*{\EE}$ induces a surjection $u^*{\EE}^{\vee} \to F^{\vee}$ and this gives
an inclusion ${\PP}(F^{\vee}) \to {\PP}(u^*{\EE}^{\vee})$ of projective bundles over ${\PP}({\EE})$
which composed with natural map ${\PP}(u^*{\EE}^{\vee}) \to {\PP}({\EE}^{\vee})$ gives the map
$\check{\rho}: I={\PP}(F^{\vee}) \to {\PP}({\EE}^{\vee})$.
This implies
$$
\mathcal{O}_{\PP(F^{\vee})}(1)=\check{\rho}^*\mathcal{O}_{{\PP}({\EE}^{\vee})}(1) \quad \text{\rm and similarly} \quad 
\mathcal{O}_{\PP(\check{F}^{\vee})}(1)={\rho}^*\mathcal{O}_{{\PP}({\EE})}(1)\, . \eqno(13)
$$
With 
$$
f=c_1(F), \quad \check{f}=c_1(\check{F}), \quad
h=c_1(\mathcal{O}_{{\PP}({\EE})}(1)), \quad\check{h}=c_1(\mathcal{O}_{{\PP}({\EE}^{\vee})}(1))\, ,
$$
this gives the identities of pullbacks of the first Chern classes $c_1({\EE})=-c_1({\EE}^{\vee})=\lambda$
$$
\rho^*(f)+\rho^*(h)=\rho^*u^* (\lambda) = \check{\rho}^* \check{u}^* (\lambda) = 
-\check{\rho}^*(\check{f})-\check{\rho}^*(\check{h})\, .
$$
Since $I={\PP}(F^{\vee})$ over ${\PP}({\EE})$ and 
$\check{\rho}^*\check{h} = c_1({\mathcal O}_{{\PP}(F^{\vee})}(1))$, 
the Chern classes of $F^{\vee} $ and the first Chern class of the 
tautological line bundle satisfy the relation
$$
\check{\rho}^* \check{h}^2 +  \rho^*(f) \, \check{\rho}^*(\check{h}) +\rho^*(c_2(F)) =0 \, .
\eqno(14)
$$
\begin{corollary}\label{up-down} Under the map $\rho_{*}\check{\rho}^*$ we have
$$
\check{h}^2\mapsto h-u^*(\lambda), \quad \check{h} \, \check{u}^*(\xi)\mapsto u^*(\xi), \quad
\check{u}^*(\eta) \mapsto 0
$$
for $\xi \in {\rm CH}^1(\overline{\mathcal{M}}_3)$ and $\eta \in {\rm CH}^2(\overline{\mathcal{M}}_3)$.
\end{corollary}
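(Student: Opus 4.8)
The plan is to push everything down along the $\PP^{1}$-bundle $\rho\colon I=\PP(F^{\vee})\to \PP(\EE)$, so that the computation becomes a bookkeeping exercise with the Grothendieck relation and the projection formula. First I would collect the ingredients already at hand. Since $\EE$ has rank three, $F$ has rank two, so $\rho$ is indeed a $\PP^{1}$-bundle; and because $\PP$ denotes the bundle of hyperplanes (the convention under which $u_{*}\mathcal{O}_{\PP(\EE)}(1)=\EE$), one has $\rho_{*}(1)=0$ and $\rho_{*}(\zeta)=1$, where I write $\zeta:=\check\rho^{*}(\check h)$. By the identities (13) we have $\zeta=c_{1}(\mathcal{O}_{\PP(F^{\vee})}(1))$, so the relation (14) can be rewritten as $\zeta^{2}=-\rho^{*}(f)\,\zeta-\rho^{*}(c_{2}(F))$; and from the tautological sequence $0\to F\to u^{*}(\EE)\to \mathcal{O}_{\PP(\EE)}(1)\to 0$ one reads off $f=c_{1}(F)=u^{*}(\lambda)-h$.

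Next I would evaluate $\rho_{*}\check\rho^{*}$ on the three classes in turn. For $\check h^{2}$: since $\check\rho^{*}(\check h^{2})=\zeta^{2}$, the displayed form of (14) together with the projection formula gives $\rho_{*}(\zeta^{2})=-f\cdot\rho_{*}(\zeta)-c_{2}(F)\cdot\rho_{*}(1)=-f=h-u^{*}(\lambda)$. For $\check h\cdot\check u^{*}(\xi)$ with $\xi\in{\rm CH}^{1}(\overline{\mathcal{M}}_{3})$: using the commutativity $\check u\circ\check\rho=u\circ\rho$ of the defining square, $\check\rho^{*}(\check h\cdot\check u^{*}(\xi))=\zeta\cdot\rho^{*}u^{*}(\xi)$, so the projection formula yields $\rho_{*}(\zeta\cdot\rho^{*}u^{*}(\xi))=u^{*}(\xi)\cdot\rho_{*}(\zeta)=u^{*}(\xi)$. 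For $\check u^{*}(\eta)$ with $\eta\in{\rm CH}^{2}(\overline{\mathcal{M}}_{3})$: $\check\rho^{*}\check u^{*}(\eta)=\rho^{*}u^{*}(\eta)$, hence $\rho_{*}(\rho^{*}u^{*}(\eta))=u^{*}(\eta)\cdot\rho_{*}(1)=0$.

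The argument is entirely formal, so there is no genuine obstacle; the one point that must be pinned down is the normalization $\rho_{*}(\zeta)=+1$ — equivalently, the sign with which $f$ enters the relation (14) — which is forced by the paper's convention that $\PP$ parametrizes hyperplanes, so that $\PP(F^{\vee})=\mathrm{Proj}(\mathrm{Sym}\,F^{\vee})$ carries an $\mathcal{O}(1)$ of relative degree one over $\PP(\EE)$. Once that convention is kept straight, the three stated formulas drop out immediately, and these are exactly the substitutions that will be used to transport the classes $[\check S]$ and $[\check T]$ from $\PP(\EE^{\vee})$ to $\PP(\EE)$.
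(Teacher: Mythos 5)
Your proof is correct and follows essentially the same route as the paper: the paper also derives the first formula by applying $\rho_*$ to relation (14) together with $f=c_1(F)=u^*(\lambda)-h$, and disposes of the other two cases by "general intersection theory," which is exactly the projection-formula bookkeeping (with $\rho_*(1)=0$, $\rho_*(\check\rho^*\check h)=1$, and the commutative square) that you spell out.
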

\begin{proof}
Using relation (14) gives 
$$
\rho_*(\check{\rho}^*(\check{h})^2)=-\rho_{*}(\rho^*(f)\check{\rho}^*(\check{h})-\rho^*(c_2(F))=
-f=h- u^*(\lambda) \, .
$$
The other properties follow from general intersection theory.
\end{proof}

Let now $\psi$ be the class of the codimension $2$ cocycle 
$\check{S} \cdot \check{T}$.
\begin{lemma}\label{up-down-psi}
We have $\rho_{*}\check{\rho}^* \psi= 24 \, h +216 \, \lambda -24\, \delta_0 -48\, \delta_1$.
\end{lemma}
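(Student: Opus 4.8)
The plan is to compute $\rho_*\check\rho^*\psi$ by first expressing the codimension-two class $\psi = [\check S]\cdot[\check T]$ in terms of the generators $\check h$, $\lambda$, $\delta_0$, $\delta_1$ on ${\PP}({\EE}^\vee)$, then applying the projection formula via Corollary \ref{up-down}. The key inputs are the cycle relations from Section \ref{moregenus3}:
\[
[\check S] = 4\,\check h + \check u^*(20\,\lambda - 2\,\delta_0 - 4\,\delta_1), \qquad
[\check T] = 6\,\check h + \check u^*(30\,\lambda - 3\,\delta_0 - 6\,\delta_1),
\]
where I abbreviate $\check h = c_1(\mathcal{O}_{{\PP}({\EE}^\vee)}(1))$ and suppress $\check u^*$ on the boundary classes. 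Multiplying these two expressions gives
\[
\psi = 24\,\check h^2 + \check h\,\check u^*\bigl(4(30\,\lambda-3\,\delta_0-6\,\delta_1) + 6(20\,\lambda-2\,\delta_0-4\,\delta_1)\bigr) + (\text{codim-2 terms from }\overline{\mathcal{M}}_3),
\]
so the coefficient of $\check h$ is $\check u^*(120\,\lambda - 12\,\delta_0 - 24\,\delta_1 + 120\,\lambda - 12\,\delta_0 - 24\,\delta_1) = \check u^*(240\,\lambda - 24\,\delta_0 - 48\,\delta_1)$.

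**Applying the pushforward.** Now I invoke Corollary \ref{up-down}: $\check h^2 \mapsto h - u^*(\lambda)$, $\check h\,\check u^*(\xi)\mapsto u^*(\xi)$ for $\xi\in {\rm CH}^1$, and $\check u^*(\eta)\mapsto 0$ for $\eta\in {\rm CH}^2$. The codimension-two terms pulled back from $\overline{\mathcal{M}}_3$ therefore contribute nothing, and I am left with
\[
\rho_*\check\rho^*\psi = 24\,(h - u^*(\lambda)) + u^*(240\,\lambda - 24\,\delta_0 - 48\,\delta_1) = 24\,h + u^*(216\,\lambda - 24\,\delta_0 - 48\,\delta_1),
\]
which is exactly the claimed formula. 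This is essentially a bookkeeping computation once the cycle relations for $\check S$ and $\check T$ and the behaviour of $\rho_*\check\rho^*$ are in hand.

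**Main obstacle.** The one genuinely delicate point is justifying that the product $[\check S]\cdot[\check T]$ taken in the Chow ring of ${\PP}({\EE}^\vee)$ actually equals the class $\psi$ of the codimension-two cycle $\check S\cdot\check T$ that we want — i.e., that $\check S$ and $\check T$ meet in the expected codimension (properly) over all of $\overline{\mathcal{M}}_3$, including over the boundary, so that no excess-intersection correction is needed. The classical fact that $\check S\cap\check T$ is a reduced $0$-cycle of $24$ hyperflex points handles the generic fibre; for the boundary one can argue either that both divisors are Cohen--Macaulay and the intersection remains pure of the right dimension, or simply note that any discrepancy would live in codimension $\geq 2$ pulled back from the boundary of $\overline{\mathcal{M}}_3$, hence is killed by $\rho_*\check\rho^*$ by the last clause of Corollary \ref{up-down}. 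The latter observation makes the computation robust regardless of the precise boundary behaviour.
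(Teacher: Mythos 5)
Your computation is exactly the paper's proof: expand $[\check S]\cdot[\check T]$ using the two cycle relations from Section \ref{moregenus3} to get $\psi = 24\,\check h^2 + \check h\,\check u^*(240\,\lambda-24\,\delta_0-48\,\delta_1) + r$ with $r$ pulled back from ${\rm CH}^2(\overline{\mathcal{M}}_3)$, then apply Corollary \ref{up-down}. Your extra paragraph on proper intersection is harmless but not needed, since $\psi$ is taken to be the class of the intersection product $\check S\cdot\check T$ and, as you note, any ambiguity supported on classes pulled back from the base is killed by $\rho_*\check\rho^*$.
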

\begin{proof}
By the results of the preceding section we have
$$
\psi= 24 \, \check{h}^2 + 240 \, \check{h}\lambda -24 \, \check{h}\delta_0 -48\, \check{h}\delta_1 +r
$$
with $r \in \check{\rho}^* {\rm CH}^2(\overline{\mathcal{M}}_3)$. Corollary \ref{up-down} implies the result.
\end{proof}

We now claim that the codimension $2$ cycle $\check{S}\cdot \check{T}$ when restricted to the hyperelliptic locus
is of the form $12 \, \check{h}$, in other words, by (2) 
it contains an effective codimension $2$ cycle with class
$$
12\, (9 \, \lambda -\delta_0 - 3\, \delta_1) \, \check{h} + \check{u}^*(\xi)
$$
with $\xi$ a codimension $2$ class on $\overline{\mathcal{M}}_3$.
We check this using the explicit form of the two concomitants $\sigma$ and $\tau$ defining $\check{S}$ and $\check{T}$.
Here $\sigma$ is a polynomial of degree $4$ in $a_0,\ldots,a_{14}$ and degree $4$ in the coordinates $u_0,u_1,u_2$
where $a_0,\ldots,a_{14}$ are the coefficients of the general ternary quartic. 
A calculation shows that $\sigma$ restricted to the locus of
double conics becomes a square $q^2$ with $q$ of degree $2$ in the $u_i$,
while $\tau$ becomes a cube $q^3$. Hence the cycle $S^{\vee} \cdot T^{\vee}$ restricted to the hyperelliptic locus
is represented by an effective cycle representing $6\, q \sim 12 \check{h}$. By Corollary \ref{up-down} under $\rho_* \hat{\rho}^*$ this
is sent to an effective cycle with class $12 (9\, \lambda -\delta_0 -3\,  \delta_1)$. Since $H$ is defined as
the closure of the hypertangent divisor in the generic fibre, the class of $H$ equals $\rho_*\check{\rho}^*\psi$ minus
$12$ times the class of the hyperelliptic locus; by Lemma \ref{up-down-psi} 
we get
$$
24 \, h +216 \, \lambda -24\, \delta_0 -48\, \delta_1 - 12(9\lambda -\delta_0 -3 \delta_1)=
24 \, h +108\, \lambda -12\, \delta_0 - 12 \, \delta_1\, .
$$
We summarize.
\begin{proposition}\label{classhypertangent}
The class $[H]$ of the hypertangent divisor $H$ in ${\PP}_{\overline{\mathcal{M}}_3}({\EE})$
equals  $[\mathcal{O}_{\PP({\EE})}(24)]+ 108 \, \lambda - 12\, \delta_0 -12  \, \delta_1$.
It gives rise to a Siegel modular form
of degree $3$ and weight $(24,0,108)$ vanishing with multiplicity $12$
along the boundary.
\end{proposition}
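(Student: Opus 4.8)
The plan is to realise the $24$ hypertangent lines of a generic canonical quartic as the image, under the incidence correspondence $I$ of Section~\ref{hypertangentsection}, of the codimension-two cycle $\check S\cdot\check T$ cut out in $\PP(\EE^{\vee})$ by the two concomitants $\sigma,\tau$ of Section~\ref{moregenus3}, and then to remove the component of that cycle that is supported over the hyperelliptic locus, where the description breaks down. Over $\mathcal M_3^{nh}$ a point $\ell\in\PP(\EE^{\vee})$ is a line in the canonical $\PP^2$, the fibre $\check\rho^{-1}(\ell)\subset I$ is mapped isomorphically by $\rho$ onto that line inside $\PP(\EE)$, and classically (\cite{Salmon}) $\check S\cap\check T$ is, in a generic fibre, the reduced $0$-cycle of the $24$ hyperflexes; hence $\rho_{*}\check\rho^{*}(\check S\cdot\check T)$ coincides with the hypertangent divisor $H$ over $\mathcal M_3^{nh}$. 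So $[H]$ should equal $\rho_{*}\check\rho^{*}\psi$, with $\psi=[\check S\cdot\check T]$, minus the image under $\rho_{*}\check\rho^{*}$ of the hyperelliptic component.

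The computation of $\rho_{*}\check\rho^{*}\psi$ is formal. Multiplying $[\check S]=[\mathcal O_{\PP(\EE^{\vee})}(4)]+20\,\lambda-2\,\delta_0-4\,\delta_1$ and $[\check T]=[\mathcal O_{\PP(\EE^{\vee})}(6)]+30\,\lambda-3\,\delta_0-6\,\delta_1$ from Section~\ref{moregenus3} and discarding the part in $\check u^{*}\mathrm{CH}^{2}(\overline{\mathcal M}_3)$ gives $\psi\equiv 24\,\check h^{2}+\check h\,\check u^{*}(240\,\lambda-24\,\delta_0-48\,\delta_1)$. Applying Corollary~\ref{up-down} (which sends $\check h^{2}\mapsto h-u^{*}\lambda$, $\check h\,\check u^{*}\xi\mapsto u^{*}\xi$ and $\check u^{*}\eta\mapsto 0$, via the identification $I=\PP(F^{\vee})$ and relation~(14)) yields $\rho_{*}\check\rho^{*}\psi=24\,h+216\,\lambda-24\,\delta_0-48\,\delta_1$, which is Lemma~\ref{up-down-psi}.

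The \textbf{main obstacle} is to identify and quantify the excess of $\check S\cdot\check T$ over the hyperelliptic locus $\overline{\mathcal H}_3$. Over the locus of double conics the canonical quartic degenerates to $2C_0$ with $C_0$ a smooth conic, and the key step — which I would carry out by direct substitution into Salmon's explicit concomitants — is to check that $\sigma$ (of degree $4$ in the line coordinates) restricts to $q^{2}$ and $\tau$ (of degree $6$ in the line coordinates) restricts to $q^{3}$, where $q$ is the quadratic form defining the dual conic $\check C_0$. Thus over $\overline{\mathcal H}_3$ both $\check S$ and $\check T$ fail to meet properly along the relative dual-conic bundle $\{q=0\}$, and a local multiplicity count shows that $\check S\cdot\check T$ is there represented by the effective cycle $6\,\{q=0\}$, whose class is $12\,\check h\cdot\check u^{*}(\mathfrak h)$ modulo $\check u^{*}\mathrm{CH}^{2}(\overline{\mathcal M}_3)$, where $\mathfrak h=9\,\lambda-\delta_0-3\,\delta_1=[\overline{\mathcal H}_3]$ by equation~(2). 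Since $H$ is by definition the closure over $\mathcal M_3^{nh}$ of the hyperflex divisor, it equals $\rho_{*}\check\rho^{*}(\check S\cdot\check T)$ minus the image of this excess; Corollary~\ref{up-down} sends $12\,\check h\,\check u^{*}(\mathfrak h)$ to $12\,u^{*}(\mathfrak h)$ and the $\check u^{*}\mathrm{CH}^{2}$-part to $0$, so
$$
[H]=24\,h+216\,\lambda-24\,\delta_0-48\,\delta_1-12\,(9\,\lambda-\delta_0-3\,\delta_1)=24\,h+108\,\lambda-12\,\delta_0-12\,\delta_1.
$$

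Finally, for the modular-form statement I would push the tautological section $1$ of $\mathcal O_{\PP(\EE)}(H)$ forward along $u\colon\PP(\EE)\to\overline{\mathcal M}_3$. Since $u_{*}\mathcal O_{\PP(\EE)}(24)=\Sym^{24}(\EE)$ and the class computation reads $\mathcal O(H)\cong\mathcal O(24)\otimes u^{*}(\det(\EE)^{108}(-12\,\delta_0-12\,\delta_1))$, the projection formula produces a regular section of $\Sym^{24}(\EE)\otimes\det(\EE)^{108}$ vanishing to order $12$ along $\Delta_0$ and $\Delta_1$. This section is invariant under $-1$ acting on the fibres of $\EE$, since that acts by $(-1)^{24}=1$ on $\Sym^{24}(\EE)$ and by $(\det(-1))^{108}=1$ on $\det(\EE)^{108}$; as the involution $-1$ realises the double cover of stacks $\mathcal M_3\to\mathcal A_3$, it descends under the Torelli morphism $\overline{\mathcal M}_3\to\tilde{\mathcal A}_3$ to a section of $\Sym^{24}(\EE)\otimes\det(\EE)^{108}$ over $\mathcal A_3$ (the image of $\Delta_1$ having codimension $2$), which by the Koecher principle extends to $\tilde{\mathcal A}_3$ as a Siegel modular form of degree $3$ and weight $(24,0,108)$ vanishing with multiplicity $12$ along the boundary.
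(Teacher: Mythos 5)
Your proposal is correct and follows essentially the same route as the paper: it computes $\rho_{*}\check\rho^{*}[\check S\cdot\check T]$ via Corollary \ref{up-down} (recovering Lemma \ref{up-down-psi}), identifies the excess over the hyperelliptic locus by the same observation that $\sigma$ and $\tau$ restrict to $q^{2}$ and $q^{3}$ on double conics, giving the effective cycle $6q\sim 12\check h$ whose image is $12(9\lambda-\delta_0-3\delta_1)$, and subtracts it to get $[H]$. The concluding push-forward/descent argument for the modular form likewise matches the mechanism the paper uses throughout.
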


\smallskip

We now work on the Hurwitz space and define and calculate the class of a hypertangent $H_h$ divisor there. 
It is defined by taking the eight tangent lines at the ramification points of the canonical image. 
More precisely, on $\tilde{\PP}$ we have the line bundle $N$ defined
in (9). Recall that $\tilde{S}_k$ for $1 \leq k \leq 8$ is the pullback of the section $S_k$ 
of $\pi_{9}: \overline{\mathcal{M}}_{0,9}
\to \overline{\mathcal{M}}_{0,8}$. 
Under restriction to the hyperelliptic locus the Weierstrass points 
degenerate to the ramification points. We define the corresponding hypertangent 
divisor $H_h$ in ${\PP}({\EE})$ over $\overline{\mathcal{H}}_{3,2}$ 
by taking the tangents to the canonical image 
of the generic curve at the points of the sections $\tilde{S_k}$, $k=1,\ldots, 8$
over $\mathcal{H}_{3,2}$ and then taking the closure over $\overline{\mathcal{H}}_{3,2}$.  

We now consider the bundle $N(-2\tilde{S}_k)$ on
$\tilde{\PP}$. This line bundle is trivial on the generic fibre of $\pi: \tilde{\PP} \to B$, so
$\pi_*(N(-2\tilde{S}_k))$ is a line bundle on $B$. 
\begin{lemma}
We have 
$$
c_1(R^1\pi_*N(-2\tilde{S}_k))=\Delta_2(k^{+})+\Delta_3(k^{+}), \quad  \text{\rm and} \quad
c_1(\pi_*N(-2\tilde{S}_k))=-\Delta_3(k^{+})-\Delta_3+E_k\, .
$$
\end{lemma}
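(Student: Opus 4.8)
Set $\mathcal F=N(-2\tilde S_k)=\mathcal O_{\tilde\PP}(D_k+E_k-R-2\tilde S_k)$. Since $g=3$, the bundle $N$ has degree $g-1=2$ on a general fibre of $\pi$ by Lemma~\ref{lemma1}, and $\tilde S_k$ meets such a fibre in one point, so $\mathcal F$ restricts to $\mathcal O_{\PP^1}$ on a general fibre; hence $\pi_*\mathcal F$ is a line bundle on $B$ and $R^1\pi_*\mathcal F$ is a torsion sheaf supported on the boundary. The plan is to first pin down $R^1\pi_*\mathcal F$ by a fibrewise cohomology computation, and then to obtain $c_1(\pi_*\mathcal F)$ from Grothendieck--Riemann--Roch (run exactly as in the proof of Proposition~\ref{codim2}) together with the value of $c_1(R^1\pi_*\mathcal F)$.

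For $c_1(R^1\pi_*\mathcal F)$ I would restrict $\mathcal F$ to the general fibre over each boundary divisor. By Lemma~\ref{lemma1}, over $\Delta^\Lambda$ with $\#\Lambda=2i+2$ the general fibre is $\PP_1\cup\PP_2$ and $N$ has bidegree $(i,g-i-1)$, while for $\#\Lambda=2i+1$ it is the chain $\PP_1\cup R\cup\PP_2$ and $N$ has tridegree $(i-1,1,g-i-1)$; moreover $\tilde S_k$ lies on the component carrying the $k$-th marked point (on $\Pi^\Lambda$ if $k\in\Lambda$, on $\Pi^{\Lambda^c}$ if $k\notin\Lambda$), so passing from $N$ to $\mathcal F$ drops the degree on exactly that component by $2$. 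Computing $h^0$ and $\chi$ of each restriction by the normalization sequence of the nodal curve, I expect $h^1=1$ over a general point of $\Delta_2$ with $k\in\Lambda$ (bidegree $(-2,2)$) and over a general point of $\Delta_3$ with $k\in\Lambda$ (tridegree $(-2,1,1)$), and $h^1=0$ over a general point of each remaining boundary divisor, namely $\Delta_2(k^-)$, $\Delta_3(k^-)$ and $\Delta_4$. Thus $R^1\pi_*\mathcal F$ is supported on $\Delta_2(k^+)\cup\Delta_3(k^+)$ with generic length $\geq 1$ on each, and checking that the degeneration at the generic point of either divisor is the generic (hence transverse) one shows the length is exactly $1$, giving $c_1(R^1\pi_*\mathcal F)=\Delta_2(k^+)+\Delta_3(k^+)$.

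For $c_1(\pi_*\mathcal F)$ I would run Grothendieck--Riemann--Roch for $\pi$ and $\mathcal F$: using $\pi_*(\mathrm{Td}^\vee_2(\omega_\pi))=0$ one gets $c_1(\pi_!\mathcal F)=\tfrac12\pi_*\bigl(c_1(\mathcal F)^2-c_1(\mathcal F)\,\omega_\pi\bigr)$. Writing $c_1(\mathcal F)=c_1(N)-2\tilde S_k$ and expanding, and inserting $c_1(\pi_*N)=\lambda$ together with $R^1\pi_*N=0$ (Proposition~\ref{codim2}, Lemma~\ref{pistarN=E}), $\pi_*(\tilde S_k^2)=-\psi_k$ (as $\tilde S_k$ is a section with $c_1(N_{\tilde S_k/\tilde\PP})=-\psi_k$), $\pi_*(\omega_\pi\cdot\tilde S_k)=\psi_k$, and $\pi_*(c_1(N)\cdot\tilde S_k)=\tilde s_k^*c_1(N)=\psi_k/2$ (from the proof of Lemma~\ref{lemma1}, where $\tilde s_k^*(D_k+\pi^*E_k-R)=\psi_k/2$ and $\tilde S_k$ is disjoint from $R$), I expect $c_1(\pi_!\mathcal F)=\lambda-2\psi_k$. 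Since $c_1(\pi_!\mathcal F)=c_1(\pi_*\mathcal F)-c_1(R^1\pi_*\mathcal F)$, this yields $c_1(\pi_*\mathcal F)=\lambda-2\psi_k+\Delta_2(k^+)+\Delta_3(k^+)$. Finally I would substitute relation~(10) for $\lambda$, the displayed formula for $\psi_k$, and the expression $E_k=\tfrac52\psi_k-(2\Delta_2(k^+)+\Delta_3(k^+)+\Delta_4)$ valid here, and verify by a routine computation in $\mathrm{Pic}(B)_\QQ$ --- after splitting each class into its $(k^+)$- and $(k^-)$-part --- that $\lambda-2\psi_k+\Delta_2(k^+)+\Delta_3(k^+)=-\Delta_3(k^+)-\Delta_3+E_k$.

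The one place that needs genuine care is the last claim of the second paragraph: promoting the fibrewise value $h^1=1$ to the statement that $R^1\pi_*\mathcal F$ has length exactly $1$ along $\Delta_2(k^+)$ and $\Delta_3(k^+)$ requires a local model of the one-parameter degeneration at the generic point of these divisors, to rule out extra infinitesimal contributions. Everything else is bookkeeping with the bidegree data of Lemma~\ref{lemma1} and the tautological relations on $B$.
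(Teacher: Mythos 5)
Your proposal is correct and takes essentially the same route as the paper: the class of $R^1\pi_*N(-2\tilde{S}_k)$ is found by restricting to the general fibres over the boundary divisors, and $c_1(\pi_*N(-2\tilde{S}_k))$ by Grothendieck--Riemann--Roch exactly as in the proof of Proposition \ref{codim2} (the paper substitutes the formulas (7)--(8) directly, whereas you reuse $c_1(\pi_*N)=\lambda$, $\tilde{s}_k^*c_1(N)=\psi_k/2$, $\pi_*(\tilde{S}_k^2)=-\psi_k$, $\pi_*(\omega_\pi\tilde{S}_k)=\psi_k$; your intermediate value $c_1(\pi_!\,N(-2\tilde{S}_k))=\lambda-2\psi_k$ and the final identification with $-\Delta_3(k^{+})-\Delta_3+E_k$ check out). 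The multiplicity-one point you flag for $R^1$ along $\Delta_2(k^{+})$ and $\Delta_3(k^{+})$ is handled no more explicitly in the paper, which simply asserts that the first formula ``follows by analyzing the restrictions over the boundary components.''
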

\begin{proof} Recall that $N=\mathcal{O}(D_k+\pi^*(E_k)-R)$. The first statement follows by analyzing the restrictions over the boundary components. For the second we apply Grothendieck-Riemann-Roch
as in the proof of Proposition \ref{codim2}. By (7) and (8) we have 
$$
\begin{aligned}
c_1(\pi_*N(-2\tilde{S}_k))&= -\Delta_2(k^{+})- 2\, \Delta_3(k^{+})-\Delta_3+c_1(R^1\pi_*N(-2\tilde{S}_k))\\
&= -\Delta_3(k^{+})-\Delta_3+ E_k \, .\\
\end{aligned}
$$
\end{proof} 

Put $\mathcal{F}_k=\pi_*(N(-2\tilde{S}_k))$.
The injection $N(-2\tilde{S}_k) \hookrightarrow N$ induces an injection $\mathcal{F}_k \to {\EE}$.
Pulling back to $\PP({\EE})$ via $u^*$ and composing with the canonical surjection
$u^*({\EE}) \to \mathcal{O}_{\PP({\EE})}(1)$ we get an induced map
$$
q: u^*\mathcal{F}_k \to \mathcal{O}_{\PP({\EE})}(1)\, .
$$
The degeneracy locus of $q$ is an effective divisor $F_k$ that is the vanishing divisor
of a section of $\mathcal{O}_{\PP({\EE})}(1) \otimes u^*\mathcal{F}_k^{-1}$. The interpretation is as follows.
The map $\phi$ defines an embedding of the generic fibre of $\tilde{\PP}$ into the generic fibre ${\PP}({\EE})$.
If we identify $H^0({\PP}^1,\mathcal{O}(2))$ with the fibre of ${\EE}$ and projectivize, 
the divisor $p_1+p_2 \in |\mathcal{O}(2)|$ is mapped to
the line through through the points $\phi(p_1)$, $\phi(p_2)$.
We now sum these divisors $F_k$ and get an effective divisor $H_h$ with class
$$
\begin{aligned}
{}[H_h]=8\, [\mathcal{O}(1)]- \sum_{k=1}^8 [u^*\mathcal{F}_k] &= 
[\mathcal{O}(8)] +u^*(3\, \Delta_3+8 \, \Delta_3 - \sum_{k=1}^8 E_k) \\
 &= [\mathcal{O}(8)] +u^*(8\, \lambda -2\, \Delta_2 +\Delta_3)\, ,\\
\end{aligned}
$$
where we use the formulas of Section \ref{extending} and Section \ref{symmetrization}.

We can now compare the class of the hyperelliptic hypertangent divisor $H_h$ with that of 
the pull back of the hypertangent divisor $H$ to the Hurwitz space.  By Proposition \ref{classhypertangent}
the pull back of $H$ has class $[\mathcal{O}(24)] + 108\, \lambda - 24\, (\Delta_2+\Delta_4)- 12 \, \Delta_1$.
Since the $24$ Weierstrass points collapse with multiplicity $3$ to the $8$ ramification points 
we compare the class (of the pull back of)  $[H]$  with that of $3\, [H_h]$. 
Substituting the formula for $\lambda$ we get 
$$
[H]-3\, [H_h]=9\, \Delta_1\, ,
$$
which means that the pull back of $H$ vanishes with multiplicity $9$ 
at the hyperelliptic boundary component $\Delta_1$. 
\end{section}
\begin{section}{Genus Four}
For a smooth curve $C$ of genus $4$ the natural map ${\rm Sym}^2(H^0(C,\omega_C)) 
\to H^0(C,\omega_C^{\otimes 2})$ is surjective and the kernel has dimension $1$.
It determines a quadric in ${\PP}^{3}={\PP}(H^0(C,\omega_C))$ containing the canonical curve. Over
$\overline{\mathcal{M}}_4$ we find a corresponding exact sequence
$$
0\to U \to {\rm Sym}^2({\EE}) \to \pi_* \omega_C^{\otimes 2} \to 0 \, .
$$
The line bundle $U$ has first Chern class $5\, \lambda - (13\, \lambda -\delta)=-8\, \lambda +\delta$
by Mumford's calculation of $c_1(\pi_*\omega_c^{\otimes 2})$ \cite[Thm.\ 5.10]{Mumford1977}. In the bundle ${\PP}({\EE})$ the quadric containing the canonical curve
determines a divisor $Q$. 
Let $u: {\PP}({\EE}) \to \overline{\mathcal{M}}_4$ be the projection.

\begin{lemma}
The divisor class of $Q$ satisfies: $[Q]=[\mathcal{O}(2)]+ u^*(8\, \lambda -\delta)$. 
\end{lemma}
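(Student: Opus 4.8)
The plan is to write the line bundle $\mathcal{O}_{\PP(\EE)}(Q)$ explicitly as a twist of $\mathcal{O}_{\PP(\EE)}(2)$ by a pullback from $\overline{\mathcal{M}}_4$ and then take first Chern classes, exactly in the spirit of the proofs of Proposition~\ref{classofA} and Lemma~\ref{Qlemma}. Since $u_*\mathcal{O}_{\PP(\EE)}(m)=\Sym^m(\EE)$ for $m\geq 1$, the inclusion $U\hookrightarrow \Sym^2(\EE)=u_*\mathcal{O}_{\PP(\EE)}(2)$ corresponds by adjunction to a nonzero morphism $u^*U\to \mathcal{O}_{\PP(\EE)}(2)$, that is, to a nonzero section $s$ of $\mathcal{O}_{\PP(\EE)}(2)\otimes u^*U^{\vee}$. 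I would then check that the zero divisor of $s$ is $Q$: over a general point $[C]\in\overline{\mathcal{M}}_4$, under the identification $\PP(\EE_C)=\PP(H^0(C,\omega_C))$ the section $s$ is the quadratic form spanning $U_{[C]}=\ker\bigl(\Sym^2 H^0(C,\omega_C)\to H^0(C,\omega_C^{\otimes 2})\bigr)$, whose zero locus in $\PP^{3}$ is exactly the quadric through the canonical curve. Hence $\mathcal{O}_{\PP(\EE)}(Q)\cong \mathcal{O}_{\PP(\EE)}(2)\otimes u^*U^{\vee}$.

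Taking first Chern classes gives $[Q]=[\mathcal{O}(2)]-u^*c_1(U)$, so the only remaining input is $c_1(U)$, which is already recorded above: from the exact sequence $0\to U\to \Sym^2(\EE)\to \pi_*\omega_C^{\otimes 2}\to 0$, together with $c_1(\Sym^2(\EE))=5\,\lambda$ (as $\EE$ has rank $4$) and Mumford's value $c_1(\pi_*\omega_C^{\otimes 2})=c_1(\EE_2)=\kappa_1+\lambda=13\,\lambda-\delta$ for $g=4$, one gets $c_1(U)=5\,\lambda-(13\,\lambda-\delta)=-8\,\lambda+\delta$. Substituting yields $[Q]=[\mathcal{O}(2)]+u^*(8\,\lambda-\delta)$, as claimed.

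The step that needs the most care — the likely main obstacle — is to be sure that $Q$ does not secretly contain a whole fibre over some boundary divisor, which would change the class by a pullback $u^*\delta_i$. This is ruled out because $U$ is a subbundle of $\Sym^2(\EE)$ in codimension one, so the section $s$ is fibrewise nonzero and its zero scheme meets each fibre in a genuine quadric surface, not the whole $\PP^{3}$. The hyperelliptic locus, where the canonical image lies on a pencil of quadrics rather than a single one, has codimension $2$ in $\overline{\mathcal{M}}_4$ and is therefore invisible to the divisor-class computation; on its complement $U$ is an honest line bundle and all the steps above go through verbatim.
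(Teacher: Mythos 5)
Your proposal is correct and follows essentially the same route as the paper: the inclusion $U\hookrightarrow \Sym^2(\EE)=u_*\mathcal{O}(2)$ is composed with the natural map $u^*u_*\mathcal{O}(2)\to\mathcal{O}(2)$ to give a section of $\mathcal{O}(2)\otimes u^*U^{\vee}$ whose zero divisor is $Q$, and then $c_1(U)=-8\,\lambda+\delta$ from Mumford's formula gives the class. Your extra remarks — that the section is fibrewise nonzero so no fibre components occur, and that the hyperelliptic locus where $U$ fails to be a line subbundle has codimension $2$ — are correct and in fact make explicit points the paper leaves implicit.
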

\begin{proof} 
Observe that $u^* u_*\mathcal{O}(2)=u^*({\rm Sym}^2({\EE}))$.
The natural morphism $u^* u_*\mathcal{O}(2) \to \mathcal{O}(2)$ induces $u^*U \to \mathcal{O}(2)$.
The divisor $Q$ is the vanishing locus of this morphism, hence has class
$[\mathcal{O}(2)]+u^*(8\, \lambda -\delta)$.
\end{proof}
\begin{corollary}
The effective divisor $Q$ defines a Teichm\"uller modular cusp form $\chi$ 
of genus $4$ and
weight $(2,0,0,8)$.
\end{corollary}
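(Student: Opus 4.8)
The plan is to push the tautological section $1$ of $\mathcal{O}_{{\PP}({\EE})}(Q)$ down along $u\colon{\PP}({\EE})\to\overline{\mathcal{M}}_4$, exactly as in the genus $2$ and genus $3$ cases. First one observes that $Q$ is a genuine effective Cartier divisor, defined over all of $\overline{\mathcal{M}}_4$: it is the zero locus of the morphism $u^*U\to\mathcal{O}(2)$ obtained by composing $u^*U\hookrightarrow u^*u_*\mathcal{O}(2)$ with the counit $u^*u_*\mathcal{O}(2)\to\mathcal{O}(2)$, and this morphism is nonzero since over a point $[C]$ of $\mathcal{M}_4$ it cuts out the (unique, irreducible) quadric surface through the canonical curve in ${\PP}^3$ rather than all of ${\PP}^3$. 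By the Lemma, $\mathcal{O}_{{\PP}({\EE})}(Q)\cong\mathcal{O}(2)\otimes u^*U^{-1}$ with $c_1(U^{-1})=8\,\lambda-\delta$; since ${\rm Pic}(\overline{\mathcal{M}}_4)$ is freely generated by $\lambda$ and the boundary classes, $U^{-1}\cong\det({\EE})^8\otimes\mathcal{O}(-\delta)$.

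Next I apply $u_*$. In the Grothendieck convention used throughout, $u_*\mathcal{O}(1)={\EE}$ and $u_*\mathcal{O}(m)={\rm Sym}^m({\EE})$ for $m\geq 1$, so the projection formula gives
$$
u_*\mathcal{O}_{{\PP}({\EE})}(Q)=u_*\bigl(\mathcal{O}(2)\otimes u^*(\det({\EE})^8\otimes\mathcal{O}(-\delta))\bigr)={\rm Sym}^2({\EE})\otimes\det({\EE})^8\otimes\mathcal{O}(-\delta)\, .
$$
Hence $\chi:=u_*(1)$ is a regular global section of ${\rm Sym}^2({\EE})\otimes\det({\EE})^8$ over $\overline{\mathcal{M}}_4$ which vanishes along the total boundary $\delta=\delta_0+\delta_1+\delta_2$. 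In the weight notation used above — weight $(2,0,0,8)$ denoting a section of the Schur bundle which here is ${\rm Sym}^2({\EE})\otimes\det({\EE})^8$ — the section $\chi$ is a Teichm\"uller modular form of genus $4$ and weight $(2,0,0,8)$, and because it vanishes on every boundary component it is a cusp form.

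Everything above is a formal consequence of the Lemma, so the one point that really needs an argument is that $\chi\not\equiv 0$, equivalently that $Q$ does not contain a whole fibre of ${\PP}({\EE})$ over a dense open subset of $\overline{\mathcal{M}}_4$. This is the classical fact that a smooth canonical curve of genus $4$ lies on a unique quadric surface, which is moreover of rank $\geq 3$; equivalently, in the exact sequence $0\to U\to{\rm Sym}^2({\EE})\to\pi_*\omega_{\pi}^{\otimes 2}\to 0$ the line subbundle $U$ maps fibrewise to a nonzero quadratic form, so the induced map $u^*U\to\mathcal{O}(2)$ is not the zero map. I expect no serious obstacle beyond recording this observation.
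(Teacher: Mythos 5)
Your proposal is correct and follows exactly the route the paper intends: the Lemma gives $[Q]=[\mathcal{O}(2)]+u^*(8\lambda-\delta)$, and pushing the section $1$ of $\mathcal{O}(Q)$ down by $u_*$ (projection formula plus $u_*\mathcal{O}(2)={\rm Sym}^2({\EE})$) yields a section of ${\rm Sym}^2({\EE})\otimes\det({\EE})^8$ vanishing on all boundary divisors, hence a Teichm\"uller cusp form of weight $(2,0,0,8)$. Your added remarks — that $Q$ is the nonzero degeneracy locus of $u^*U\to\mathcal{O}(2)$ (uniqueness of the quadric through a canonical genus $4$ curve) and that torsion-freeness of ${\rm Pic}(\overline{\mathcal{M}}_4)$ upgrades the Chern-class identity to an isomorphism $U^{-1}\cong\det({\EE})^8\otimes\mathcal{O}(-\delta)$ — only make explicit what the paper leaves implicit.
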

If we view a section of ${\rm Sym}^2({\EE})$ as a quadratic form 
on ${\EE}^{\vee}$ we can take the discriminant, cf.\ \cite{CG}. 
Doing this with the form $\chi$ of
weight $(2,0,0,8)$ just constructed we get a scalar-valued modular form
$D(\chi)$ of weight $34$. This modular form vanishes on the closure of the locus
of curves whose canonical model lies on a quadric cone. This locus has
class $34 \lambda - 4 \, \delta_0 - 14\, \delta_1 - 18 \, \delta_2$ by
Teixidor i Bigas \cite[Prop. 3.1]{Teixidor}
and equals the divisor of curves with a vanishing thetanull. 
The modular form $D(\chi)$ 
is the square root of the restriction to $\mathcal{M}_4$ of the product of the even theta characteristics on $\mathcal{A}_4$.
\end{section}
\begin{section}{Appendix on base-point freeness}\label{app}
The relative dualizing sheaf $\omega_{\pi}$ of the universal family   $\pi :{\mathcal C}_g  \to {\mathcal M}_g$ of genus $g$ smooth curves  is base point free and the surjection
$\pi^*{\EE} \to \omega_{\pi}$ gives a map 
$\varphi: {\mathcal C}_g \to {\PP}(\EE)$ over 
${\mathcal M}_g$, which is generically an embedding.
Let $\Gamma$ be the image $\varphi({\mathcal{C}}_g)$. We wish to describe the closure of the image over the generic points of the boundary
components $\Delta_i$ for $i=0,\ldots,[g/2]$.
Over the general point of $\Delta_0$ the sheaf 
$\omega_{\pi}$ is base point free and the map $\varphi$ extends over this locus.
But over the general point of $\Delta_i$, $i\geq 1$, which represents a nodal curve of the form 
 $C_1\cup C_{2}$, with $C_1$, $C_{2}$ smooth curves of 
genus $i$ and $g-i$ meeting at a nodal point $x$,  
the sheaf  $\omega_{\pi}$ has a base point at $x$. 
We consider a family $\pi: Y \to B$ of stable curves of genus $g$ with $B$ the 
spectrum of a discrete valuation ring. 
We assume that the central fibre $C$ is a nodal curve
$C=C_1\cup C_{2}$ of genera $i$ and $g-i$ 
and smooth generic fibre. After a degree $2$
base change $B'\to B$ we get an $A_1$-singularity which we resolve
resulting in a semistable family $\pi': X \to B'$ with a special fibre
which is a chain of three curves 
$C=C_1^{\prime} \cup R \cup C_{2}^{\prime}$ with 
$C_1^{\prime}\cong C_1$ and $C_2^{\prime}\cong C_2$  
smooth curves of genus $i$ and $g-i$ and $R$ a rational $(-2)$-curve. 
We have the commutative diagram
$$
\xymatrix{
X \ar[r]^{v}  \ar[d]_{\pi' }   & Y  \ar[d]^{\pi} \\
 B' \ar[r]_{\sigma} & B 
}
$$
The morphism $v$ is $(2:1)$ ramified at $C_1, C_{2}$. 
We have  $v^*\omega_{\pi} = \omega_{\pi'}$,
and  $\sigma^*\EE_B = \EE_{B'}$ and $v^*C_j = 2\, C'_j+R$ for $j=1,2$.
There is then a natural $(2:1)$ map $\PP(\EE_{B'}) \to {\PP}(\EE_B)$.

\smallskip

Now we will show that the system $\omega_{\pi'}(-R)$ 
defines a map  $X\to \PP(\EE_{B'})$ which combined with the 
above  $(2:1)$ map  gives a $(2:1)$ map     
$\varphi' : X \to \PP(\EE_B)$
mapping  the curves  $C'_1$ and $C_2'$ 
to their canonical image and  $R$ to a double line. 
The reduced image of the map $\varphi'$ 
describes the closure of $D$ over $b_0$, the special point of $B$.

To avoid unnecessary notation we now write $\pi: X \to B$ for the semi-stable family denoted
by $\pi': X \to B'$ above. 

\begin{proposition}\label{bpfreeness}
Let $\omega$ be the relative dualizing sheaf of ${\pi}: X \to B$. Then we have
 ${\pi}_*(\omega(-R))\cong \pi_*(\omega)$ and the central fibre of $\pi_*(\omega(-R))$ is of codimension $1$ in $H^0(C,\omega(-R))$
and defines a base point free linear system on $C$.
\end{proposition}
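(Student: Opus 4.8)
Write $\omega=\omega_{X/B}$, set $x_1=C_1'\cap R$ and $x_2=C_2'\cap R$, and let $L:=\omega(-R)|_C$ denote the restriction of $\omega(-R)$ to the central fibre $C=C_1'\cup R\cup C_2'$. I will establish the three assertions in order. First, the isomorphism $\pi_*(\omega(-R))\cong\pi_*(\omega)=\EE$: tensor $0\to\mathcal O_X(-R)\to\mathcal O_X\to\mathcal O_R\to 0$ with $\omega$ to get $0\to\omega(-R)\to\omega\to\omega|_R\to 0$. By adjunction on the smooth surface $X$ and $R^2=-2$ we have $\omega|_R\cong\omega_R\otimes(\mathcal O_X(R)|_R)^{-1}\cong\mathcal O_{\PP^1}$, so $\pi_*(\omega|_R)$ is the skyscraper sheaf at $b_0$ with stalk $H^0(\PP^1,\mathcal O)=k$. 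Pushing forward, it suffices to show the restriction map $\pi_*\omega\to\pi_*(\omega|_R)$ vanishes, and since its target is a skyscraper killed by the maximal ideal this can be checked on fibres over $b_0$: one must show $H^0(C,\omega_C)\to H^0(R,\omega_C|_R)$ is zero. A section of $\omega_C$ restricts on $C_1'$ to a differential $s_1\in H^0(C_1',\omega_{C_1'}(x_1))$ with at worst a simple pole at $x_1$, so $\mathrm{Res}_{x_1}(s_1)=0$ by the residue theorem on the smooth curve $C_1'$; the gluing conditions of the dualizing sheaf then force the residues at $x_1$ and $x_2$ of the $R$-component, an element of $H^0(R,\omega_R(x_1+x_2))=H^0(\PP^1,\mathcal O)$, to vanish, hence that component is $0$. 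Thus the map is zero and $\pi_*(\omega(-R))$ is its kernel, namely $\EE$.

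\emph{Codimension one on the central fibre.} Since $\mathcal O_X(-R)|_C$ has multidegree $(-R\cdot C_1',-R^2,-R\cdot C_2')=(-1,2,-1)$ and $\omega_C$ has multidegree $(2i-1,0,2(g-i)-1)$, the line bundle $L$ has multidegree $(2i-2,2,2(g-i)-2)$, with $L|_{C_1'}\cong\omega_{C_1'}$, $L|_R\cong\mathcal O_{\PP^1}(2)$ and $L|_{C_2'}\cong\omega_{C_2'}$. From the partial-normalization sequence $0\to L\to\nu_*\nu^*L\to k_{x_1}\oplus k_{x_2}\to 0$ separating the two nodes, using $h^0(\omega_{C_1'})=i$, $h^0(\mathcal O_{\PP^1}(2))=3$, $h^0(\omega_{C_2'})=g-i$ and the surjectivity of the evaluations at $x_1,x_2$ (the canonical systems on $C_1',C_2'$ are globally generated as $i,g-i\ge 1$, and $\mathcal O_{\PP^1}(2)$ separates two points), one obtains $h^0(C,L)=i+3+(g-i)-2=g+1$. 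On the other hand $\pi_*(\omega(-R))=\EE$ is locally free of rank $g$, and over the discrete valuation ring $B$ a section of $\omega(-R)$ vanishing on $C$ is divisible by the uniformizer, so the base-change map $\EE\otimes k(b_0)\hookrightarrow H^0(C,L)$ is injective; its image $V$ is therefore a $g$-dimensional, i.e.\ codimension-one, subspace of $H^0(C,L)$.

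\emph{Base-point-freeness.} Restrict $V$ to the components. Under $\EE\otimes k(b_0)=H^0(C,\omega_C)$ the residue computation above gives $H^0(C,\omega_C)=H^0(C_1',\omega_{C_1'})\oplus H^0(C_2',\omega_{C_2'})$, the two projections being restriction to the components; since the identification $\pi_*(\omega(-R))=\pi_*\omega$ acts on $C_i'$ as the inclusion $\omega_{C_i'}\hookrightarrow\omega_{C_i'}(x_i)$ (multiplication by the section cutting out $x_i$), the restriction of $V$ to $C_1'$ is all of $|\omega_{C_1'}|$, and likewise on $C_2'$. These complete canonical systems are base point free for $i,g-i\ge1$, and in particular $x_1\in C_1'$ and $x_2\in C_2'$ are not base points, so the base locus of $V$ is contained in $R\setminus\{x_1,x_2\}$. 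It remains to see that $V$ restricts to a base point free system on $R\cong\PP^1$, equivalently that the global-to-local map $\pi^*\EE=\pi^*\pi_*(\omega(-R))\to\omega(-R)$ is surjective along $R$. Here one uses the local model of the semistable family at the node: after the quadratic base change the surface has an $A_1$-singularity $ab=\tau$ resolved by one blow-up with exceptional $(-2)$-curve $R$, near which $\omega(-R)$ is generated by $da$ (resp.\ $db$); a short computation then identifies $V|_R$ with the base point free pencil of degree $2$ realizing $R$ as the $2:1$ cover of the line spanned by the images of $x_1$ and $x_2$, in agreement with the description of $\varphi'$ recalled before the statement. Combining the three components, $V$ has no base point on $C$ and defines the morphism $\varphi'$.

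\emph{Main obstacle.} The first two assertions are formal once one has the vanishing-residue observation. The substantive point is base-point-freeness along the exceptional curve $R$: there $V$ is a \emph{proper} subsystem of $|\mathcal O_{\PP^1}(2)|$, and the fact that it has no base point cannot be read off from the fibre $C$ alone but must be extracted from the local resolution $ab=\tau$ of the node (equivalently, it is exactly the content of the statement that the already-constructed map $\varphi'$ is everywhere defined and sends $R$ two-to-one onto a line).
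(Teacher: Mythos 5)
Your treatment of the first two assertions is sound and essentially the paper's own: the vanishing of the restriction map $\pi_*\omega \to \pi_*(\omega|_R)$ via the residue theorem, the computation $h^0(C,\omega(-R)|_C)=g+1$ from the multidegree $(\omega_{C_1'},\mathcal{O}_R(2),\omega_{C_2'})$, and the injectivity of the base-change map over the discrete valuation ring are exactly the ingredients used there. Your argument that $V$ restricts onto the full canonical systems of $C_1'$ and $C_2'$ (hence no base points there or at the nodes), obtained from the isomorphism $\pi_*(\omega(-R))\cong\pi_*\omega$ together with $\pi_*\omega\otimes k(b_0)\cong H^0(C,\omega_C)\cong H^0(\omega_{C_1'})\oplus H^0(\omega_{C_2'})$, is also correct and is a slightly different, clean route to what the paper extracts from the surjectivity of $j$ in its exact sequence $0\to\pi_*(\omega(-R-C_1))\to\pi_*(\omega(-R))\to\pi_*(\omega(-R)|_{C_1})\to 0$.

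There is, however, a genuine gap exactly where you yourself place the ``main obstacle'': base-point-freeness on the interior of $R$. At that point you only assert that ``a short computation'' in the local model $ab=\tau$ identifies $V|_R$ with the base-point-free pencil, and you appeal to the description of $\varphi'$ recalled before the statement --- which is circular, since that description is precisely what the proposition is proving. Moreover the needed fact is not local at $R$: $V|_R$ is the image of the \emph{global} sections of $\omega(-R)$ on $X$, and the fibre data you have established (the values at $x_1,x_2$ and $\dim V=g$) are compatible with $V|_R$ being a two-dimensional subsystem of $|\mathcal{O}_R(2)|$ with a fixed interior base point; so some input from the family beyond the local equation of the node is indispensable. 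The paper supplies it as follows: it shows $h^0$ of $\omega(-R-C_1)|_C$ is constant equal to $g$ along the fibres, so $R^1\pi_*(\omega(-R-C_1))$ is locally free and the displayed pushforward sequence is exact; a section of $\pi_*(\omega(-R-C_1))$ whose $R$-component is a nonzero section of $\mathcal{O}_R(1)$ vanishing at $x_1$ (its vanishing there is forced, by the residue theorem on $C_1'$, through the gluing at the node), multiplied by a local equation of $C_1$, then yields an element of $V$ whose restriction to $R$ vanishes exactly doubly at $x_1$; together with an element not vanishing at $x_1$ this excludes every base point on $R$. Your local-model route could be completed, but only by adding a global ingredient of the same nature --- for instance that $\pi_*(\omega(-R))\cong\pi_*\omega$ is pulled back from the original family under the base change, so that modulo the maximal ideal every element of $V$ arises from a relative differential $f(a,b)\,da/a$ with $f(0,0)=0$, whose restriction to $R$ spans exactly the pencil $\alpha+\beta w^2$ in a suitable coordinate $w$ --- and this computation, which is the heart of the matter, is not carried out in your text.
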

\begin{proof}
We let $q=C_1\cap R$ and $p=C_2\cap R$.
The exact sequence $0\to \omega(-R) \to \omega \to \omega_{|R}\to 0$ induces a sequence
$$
0 \to \pi_*(\omega(-R)) \to \pi_*(\omega) \xrightarrow{r} \pi_*(\omega)_{|R}
$$
and the map $r$ is zero because $\omega_{|C}=(\omega_{C_1}(q),\mathcal{O}_R,\omega_{C_2}(p))$,
therefore the restrictions to $C_1$ (resp.\ $C_2)$ must vanish at $q$ (resp.\ $p$), hence extend
by $0$ on $R$. We thus see by the exactness that $\pi_*(\omega(-R))\cong \pi_*(\omega)$. 

Next we observe that $\dim H^0(C,\omega(-R))=g_1+g_2+1$ with $g_i$ the genus of $C_i$. This 
follows directly from $\omega(-R)_{|C}=(\omega_{C_1}, \mathcal{O}_R(2), \omega_{C_2})$.

We have the exact sequence
$$
0 \to \omega(-R-C_1) \to \omega(-R) \to \omega(-R)_{|C_1}\to 0\, , \eqno(15)
$$
where $\omega(-R)_{|C_1}\cong \omega_{C_1}$ and 
$\omega(-R-C_1)_{|C}=(\omega_{C_1}(q),\mathcal{O}_R(1), \omega_{C_2})$. 
For a section $(s_1,s,s_2) \in H^0(C, \omega(-R-C_1))$ the section $s$ is the unique
section of $\mathcal{O}_R(1)$ that vanishes at $q$ and with $s(p)=s_2(p)$. We thus see
$\dim H^0(C, \omega(-R-C_1))=g_1+g_2$. Therefore $h^0(\omega(-R-C_1))$ has constant 
rank $g_1+g_2$ on the fibres of $\pi$, hence $R^1\pi_*(\omega(-R-C_1))$ is a line bundle.
We conclude that the special fibre of $\pi_*(\omega(-R-C_1))$ equals $H^0(C,\omega(-R-C_1))$.
But $\pi_*(\omega(-R)_{|C_1})$ is a torsion sheaf, hence the connecting homomorphism
$\pi_*(\omega(-R)_{|C_1})\to R^1\pi_*(\omega(-R-C_1))$ of (15)  must be zero and 
we get an induced exact sequence
$$
0 \to \pi_*(\omega(-R-C_1)) \xrightarrow{\iota} \pi_*(\omega(-R)) 
\xrightarrow{j} \pi_*(\omega(-R))_{|C_1}) \to 0 \, . 
$$

Consider now a section $\sigma$ of $\pi_*(\omega(-R-C_1))$ with restriction $(s_1,s,s_2)$
to $C$. Suppose that $s\neq 0$.
If we multiply $\sigma$ with a local section $\tau$ of $\mathcal{O}(C_1)$ on $X$
with divisor $C_1$ then $\iota(\sigma)=\sigma\cdot \tau_{|C}$ has as restriction to $R$
a section of $\mathcal{O}_R(2)$ vanishing with multiplicity $2$ at $q$ and therefore
it does not vanish anywhere else. Hence the subspace of the special fibre $V$ of 
$\pi_*(\omega(-R))$ of sections vanishing on $C_1$ has $q$ as only base point on $R$.
Furthermore, the map $j$ is surjective, and choosing a section $s_1 \in H^0(C_1,\omega_{C_1})$
with $s_1(q)\neq 0$ we see that $q$ is not a base point. 
Therefore there are no base points on $R$. 
By the surjectivity
of $j$ the restriction of $V$ to $C_1$ is $H^0(C_1,\omega_{C_1})$ and therefore
there are no base points on $C_1$. By symmetry the same holds for $C_2$.

Similarly to (15) we have an exact sequence 
$$
0 \to \omega(-R-C_1-C_2) \to \omega(-R) \to \omega(-R)_{|C_1+C_2}\to 0
$$
and by a similar reasoning we see that we get an exact sequence
$$
0 \to \pi_*(\omega(-R-C_1-C_2)) \xrightarrow{\iota} \pi_*(\omega(-R)) 
\xrightarrow{j} \pi_*(\omega(-R))_{|C_1+C_2}) \to 0 \, . 
$$
This implies that given $s_1\in H^0(C_1,\omega_{C_1})$ and $s_2 \in H^0(C_2,\omega_{C_2})$
there is a unique element $(s_1,s,s_2)$ in the special fibre $V$ of $\pi_*(\omega(-R))$
mapping to $(s_1,s_2)$ under $j$. The morphism $X \to {\PP}(\pi_*(\omega(-R)))$ is given by
the surjection $\pi^*\pi_*(\omega(-R)) \to \omega(-R)$. The image of the curve $C$ in the special
fibre of ${\PP}({\EE})$ consists of the canonical images of $C_1$ and $C_2$, provided with
with image of $p$ and $q$ and the image of $R$, that is, the line connecting the images of $p$ and $q$.
If the genus $g(C_i)=1$ then the image of $C_i$ is a point.
\end{proof}
\end{section}
\begin{section}{Appendix: divisor classes of Gheorghita-Tarasca and Korotkin-Sauvaget-Zograf}\label{G-T-K-S-Z}
Here we apply the method employed in Section \ref{hypertangentsection} to determine in
a relatively straightforward way the divisor classes of two divisors in ${\PP}({\EE}^{\vee}_k)$ with
${\EE}_k=\pi_*(\omega_{\pi}^k)$,
thus reproving a theorem of Gheorghita-Tarasca (\cite[Thm 1]{G-T}) 
and a theorem of Korotkin-Sauvaget-Zograf (\cite[Thm.\ 1.12]{KSZ}).
The first divisor is a generalization of a divisor in ${\PP}({\EE}^{\vee})$ 
considered by Gheorghita in \cite{Gheorghita}. We consider in ${\PP}({\EE}^{\vee}_k)$ over $\mathcal{M}_g$
the divisor
$$
G_k=\{ (C,\omega) \in {\PP}({\EE}_k^{\vee}): \text{${\rm div}(\omega)$ contains a Weierstrass point}\}
$$
and let $\overline{G}_k$ be the closure of $G_k$ in ${\PP}({\EE}^{\vee}_k)$ over 
$\overline{\mathcal{M}}_g$. We let $\check{u}: {\PP}({\EE}_k^{\vee})\to \overline{\mathcal{M}}_g$
be the natural morphism and $\check{h}$ the hyperplane class on ${\PP}({\EE}_k^{\vee})$.

\begin{theorem}\label{G-T-Thm} {\rm (Gheorghita-Tarasca)}
The class of $\overline{G}_k$ is given by
$$
\frac{1}{k}[\overline{G}_k]= g(g^2-1) \check{h} + 2(3g^2+2g+1)\check{u}^*\lambda -   \binom{g+1}{2} 
\check{u}^* \delta_0 - \sum_{i=1}^{[g/2]} (g-i)i(g+3) \check{u}^*\delta_i\, . 
$$
\end{theorem}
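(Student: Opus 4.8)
I would follow the incidence‑variety argument of Section~\ref{hypertangentsection}. Over a curve $C$, a point of $\PP(\EE_k^{\vee})$ is a nonzero $k$-differential $\omega$ (up to scalar) and a point of $\PP(\EE_k)$ is a hyperplane in $H^{0}(C,\omega_C^{\otimes k})$; for $x\in C$ the hyperplane $H_x$ of $k$-differentials vanishing at $x$ is such a point, and ``$\omega$ vanishes at $x$'' is precisely the incidence $\langle\omega\rangle\subset H_x$. Hence, writing $\overline{\Gamma}_k\subset\PP(\EE_k)$ for the $k$-canonical image of the universal curve as in Section~\ref{classkcanonical} and $\mathcal{W}_k\subset\overline{\Gamma}_k$ for the closure of the Weierstrass locus of the fibres, the divisor $\overline{G}_k$ is the image under $\check{\rho}$ of $\rho^{-1}(\mathcal{W}_k)$, where $\rho\colon I\to\PP(\EE_k)$ and $\check{\rho}\colon I\to\PP(\EE_k^{\vee})$ are the two projections of $I=\PP(\EE_k)\times_{\overline{\mathcal{M}}_g}\PP(\EE_k^{\vee})$. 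Since a general point of $\overline{G}_k$ records a single Weierstrass point, this correspondence is birational onto its image, so $[\overline{G}_k]=\check{\rho}_{*}\rho^{*}[\mathcal{W}_k]$, up to the normalization discussed below.

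\textbf{An up--down operator.}
First I would record the analogue of Corollary~\ref{up-down} with $\EE_k$ in place of $\EE$. Identifying $I$ with $\PP(F^{\vee})$ over $\PP(\EE_k)$ from the tautological sequence $0\to F\to u^{*}\EE_k\to\mathcal{O}_{\PP(\EE_k)}(1)\to 0$, and using $\mathcal{O}_{\PP(F^{\vee})}(1)=\check{\rho}^{*}\mathcal{O}_{\PP(\EE_k^{\vee})}(1)$, $\mathcal{O}_{\PP(\check F^{\vee})}(1)=\rho^{*}\mathcal{O}_{\PP(\EE_k)}(1)$ together with the projective‑bundle relation (the analogues of (13) and (14)), one finds that $\check{\rho}_{*}\rho^{*}$ lowers codimension by $r-2$, where $r=\mathrm{rk}\,\EE_k=(2k-1)(g-1)$, and in particular
$$
\check{\rho}_{*}\rho^{*}\bigl(h_k^{\,r-1}\bigr)=\check{h}+\check{u}^{*}c_{1}(\EE_k),
\qquad
\check{\rho}_{*}\rho^{*}\bigl(h_k^{\,r-2}\,u^{*}\xi\bigr)=\check{u}^{*}\xi ,
$$
while monomials of lower $h_k$-degree contribute only pullbacks of higher‑codimension classes that drop out of a divisor class; here $h_k=c_{1}(\mathcal{O}_{\PP(\EE_k)}(1))$, $\check h=c_{1}(\mathcal{O}_{\PP(\EE_k^{\vee})}(1))$, $\xi\in{\rm CH}^{1}(\overline{\mathcal{M}}_g)$.

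\textbf{The class of $\mathcal{W}_k$.}
Next I would compute $[\mathcal{W}_k]\in{\rm CH}^{r-1}(\PP(\EE_k))$. Away from the base‑point locus of the $k$-canonical map $\varphi_k$ (which is empty for $k\ge 2$, and for $k=1$ is contained in the nodes over $\Delta_i$, $i\ge 1$, handled by the semistable model of Appendix Section~\ref{app}) the map $\varphi_k$ is an embedding, so $\mathcal{W}_k=(\varphi_k)_{*}\overline{\mathcal{W}}$ with $\overline{\mathcal{W}}\subset\overline{\mathcal{C}}_g$ the Weierstrass divisor. Using $\varphi_k^{*}h_k=k\,c_{1}(\omega_{\pi})$, Cukierman's formula $[\overline{\mathcal{W}}]=\binom{g+1}{2}c_{1}(\omega_{\pi})-\pi^{*}\lambda+(\text{boundary})$, and Proposition~\ref{curveclass} for $[\overline{\Gamma}_k]=2k(g-1)\,h_k^{\,r-2}+u^{*}\beta_{1}\,h_k^{\,r-3}+\cdots$ with $\beta_{1}=k^{2}\kappa_{1}-2k(g-1)c_{1}(\EE_k)-\epsilon$, the projection formula gives
$$
[\mathcal{W}_k]=\Bigl(\tfrac{1}{k}\tbinom{g+1}{2}\,h_k-u^{*}\lambda+(\text{boundary})\Bigr)\cdot[\overline{\Gamma}_k].
$$

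\textbf{Assembling the class, and the main obstacle.}
Multiplying out, only the $h_k^{\,r-1}$- and $h_k^{\,r-2}$-terms survive into a divisor class after $\check{\rho}_{*}\rho^{*}$; substituting $c_{1}(\EE_k)=\tfrac{k(k-1)}{2}\kappa_{1}+\lambda$ and $\kappa_{1}=12\lambda-\sum_i\delta_i$, inserting Cukierman's boundary terms, and applying the up--down operator, the coefficient $\tfrac{1}{k}\binom{g+1}{2}\cdot 2k(g-1)=g(g^{2}-1)$ of $h_k^{\,r-1}$ gives the $\check{h}$-term and the $h_k^{\,r-2}$-coefficient gives the $\check{u}^{*}\lambda$ and $\check{u}^{*}\delta_i$ terms, so that after the normalization by the factor $k$ carried by the universal $k$-differential one obtains the stated identity. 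The whole difficulty lies at the boundary: one must import Cukierman's boundary corrections to $[\overline{\mathcal{W}}]$, treat the base‑point locus of $\varphi_k$ over $\Delta_i$ via the blow‑up and $(-2)$-curve analysis of Appendix Section~\ref{app} (this is the source of the correction $\epsilon=\sum_{i\ge 1}\delta_i$ in Proposition~\ref{curveclass}), and keep precise track of all $k$-dependent multiplicities so that $\tfrac1k[\overline{G}_k]$ comes out with the clean integral coefficients above. The cases $g=2$, $k=1$ and $g=3$, $k=1$, which must recover the classes $6\check h+34\lambda-3\delta_0-5\delta_1$ and $24\check h+68\lambda-6\delta_0-12\delta_1$ of Gheorghita quoted earlier, serve as consistency checks.
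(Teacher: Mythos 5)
Your proposal follows essentially the same route as the paper's proof: the incidence variety between $\PP({\EE}_k)$ and $\PP({\EE}_k^{\vee})$ with the up--down operator $\gamma=\check{\rho}_*\rho^*$ (your analogue of Corollary \ref{up-down} is exactly Lemma \ref{gamma}), Cukierman's class for the Weierstrass divisor, $\varphi_k^*h=k\,\omega_{\pi}$, and Proposition \ref{curveclass} for $\beta_0,\beta_1$, assembled by the projection formula, and your leading coefficient $g(g^2-1)$ for $\check{h}$ agrees with the paper (note that this coefficient does not acquire a factor $k$; only the $\lambda$- and $\delta_i$-terms do, so the ``normalization by $k$'' is carried by those terms rather than uniformly). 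The boundary evaluations you defer are precisely what the paper then carries out, namely $\gamma({\varphi_k}_*\gamma_i)=(2i-1)k\,\check{u}^*\delta_i$ for Cukierman's correction term $w_2$, and for $k=1$ the semistable model of Section \ref{app} with the degree-two map $\varphi'$ (so ${\varphi'}^*h=\omega_{\pi'}-R$, with the extra contributions of $\mathfrak{r}$ and the component classes), and they go through as you anticipate.
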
 
The second divisor is the divisor $Z_k$ in ${\PP}({\EE}_k^{\vee})$ over $\overline{\mathcal{M}}_g$
of regular $k$-differentials for $k\geq 2$ possessing a double
zero.
\begin{theorem}\label{K-S-Z-Thm} {\rm (Korotkin-Sauvaget-Zograf)}
The class of the divisor $Z_k$ for $k\geq 2$ for $k\geq2$ and $(g,k)\neq (2,2)$
is given by
$$
[Z_k]= (4k+2)(g-1)\, \check{h} + k(k+1) \, \check{u}^*(12\lambda -\sum_{i=0}^{[g/2]} \delta_i)\, .
$$
\end{theorem}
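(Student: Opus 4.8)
The plan is to adapt the incidence-variety method of Section~\ref{hypertangentsection}, replacing the two concomitants used there by the \emph{universal zero divisor} of a $k$-differential. First I would form the fibre product
$$
X=\overline{\mathcal C}_g\times_{\overline{\mathcal M}_g}{\PP}({\EE}_k^{\vee}),
$$
with projections $p\colon X\to\overline{\mathcal C}_g$ onto the universal curve and $q\colon X\to{\PP}({\EE}_k^{\vee})$; the latter is the base change of $\pi\colon\overline{\mathcal C}_g\to\overline{\mathcal M}_g$, hence a flat family of stable curves with $\omega_q=p^*\omega_\pi$. A point of ${\PP}({\EE}_k^{\vee})$ is a line in ${\EE}_k=\pi_*\omega_\pi^{\otimes k}$, i.e.\ a regular $k$-differential up to scalar, so the tautological inclusion $\mathcal{O}(-1)\hookrightarrow\check{u}^*{\EE}_k$ composed with evaluation produces a universal $k$-differential: a section $s$ of $L:=p^*\omega_\pi^{\otimes k}\otimes q^*\mathcal{O}(1)$ on $X$, which at $(C,[\eta],x)$ takes the value $\eta(x)$. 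For $k\ge 2$ the sheaf $\omega_\pi^{\otimes k}$ is relatively globally generated on stable curves and a generic $k$-differential is nonzero on every component of its curve, so $s$ vanishes identically on no fibral divisor of $q$. Hence its zero scheme $\mathcal Z\subset X$ is a relative Cartier divisor with
$$
[\mathcal Z]=k\,p^*\omega+q^*\check{h},\qquad \omega:=c_1(\omega_\pi),\quad \check{h}:=c_1(\mathcal{O}_{{\PP}({\EE}_k^{\vee})}(1)),
$$
and the fibre of $q|_{\mathcal Z}$ over $(C,[\eta])$ is ${\rm div}(\eta)$, so $q|_{\mathcal Z}\colon\mathcal Z\to{\PP}({\EE}_k^{\vee})$ is finite of degree $2k(g-1)$ and $Z_k$ is exactly its branch divisor.

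Next I would compute the class of that branch divisor. Since $X$ is smooth, $\mathcal Z$ is Gorenstein, and by adjunction together with $\omega_X=q^*\omega_{{\PP}({\EE}_k^{\vee})}\otimes\omega_q$ the ramification (different) divisor $R\subset\mathcal Z$ of the finite map $q|_{\mathcal Z}$ has class
$$
[R]=c_1(\omega_{\mathcal Z})-(q|_{\mathcal Z})^*c_1(\omega_{{\PP}({\EE}_k^{\vee})})=\bigl((k+1)\,p^*\omega+q^*\check{h}\bigr)\big|_{\mathcal Z};
$$
geometrically $R$ parametrises a $k$-differential together with a point where it vanishes to order at least two. Since characteristic zero makes the ramification tame and a generic double zero is simple, $(q|_{\mathcal Z})_*[R]=[Z_k]$, so by the projection formula
$$
[Z_k]=q_*[R]=q_*\Bigl([\mathcal Z]\cdot\bigl((k+1)\,p^*\omega+q^*\check{h}\bigr)\Bigr)=q_*\Bigl(\bigl(k\,p^*\omega+q^*\check{h}\bigr)\bigl((k+1)\,p^*\omega+q^*\check{h}\bigr)\Bigr).
$$
(The same class also drops out of Grothendieck--Riemann--Roch applied to $q$ and $\mathcal{O}(\mathcal Z)$, as in Section~\ref{hypertangentsection}.) Expanding, and using that $q_*p^*=\check{u}^*\pi_*$ by flat base change, that $q$ has one-dimensional fibres (so $q_*(q^*\xi)=0$), and the standard integrals $\pi_*(\omega^2)=\kappa_1$, $\pi_*(\omega)=2g-2$, I obtain
$$
[Z_k]=(2k+1)(2g-2)\,\check{h}+k(k+1)\,\check{u}^*\kappa_1.
$$
Substituting Mumford's relation $\kappa_1=12\lambda-\sum_{i=0}^{[g/2]}\delta_i$ (\cite{Mumford1977}) gives $[Z_k]=(4k+2)(g-1)\,\check{h}+k(k+1)\,\check{u}^*\bigl(12\lambda-\sum_{i=0}^{[g/2]}\delta_i\bigr)$, which is the asserted formula.

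The hard part, and the only place the hypotheses $k\ge 2$ and $(g,k)\neq(2,2)$ enter, is the behaviour over the boundary of $\overline{\mathcal M}_g$, where two things must be checked. First, that $q|_{\mathcal Z}$ is genuinely finite, i.e.\ that $\mathcal Z$ contains no contracted fibral component; for $k\ge 2$ this follows from relative global generation of $\omega_\pi^{\otimes k}$ together with a codimension count showing that, for a general boundary point $C=C_1\cup C_2$, the locus of $k$-differentials vanishing on a whole component has codimension at least three in ${\PP}({\EE}_k^{\vee})$, hence contributes nothing fibral. Second, that $q_*[R]$ has no spurious boundary components beyond the closure of the generic double-zero divisor, i.e.\ that over a general point of each boundary divisor of ${\PP}({\EE}_k^{\vee})$ the map $q|_{\mathcal Z}$ is either unramified or ramifies exactly along $Z_k$ with multiplicity one; this is verified by analysing $q|_{\mathcal Z}$ near the nodes of a general stable curve, for which the semistable base-change picture of the Appendix, Section~\ref{app}, is the appropriate tool. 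Finally, the excluded case $(g,k)=(2,2)$ is genuinely special: there the map ${\rm Sym}^2H^0(C,\omega_C)\to H^0(C,\omega_C^{\otimes 2})$ is an isomorphism, so every quadratic differential on a genus-two curve is a product of two abelian differentials and the double-zero locus degenerates, so the clean divisorial picture above breaks down.
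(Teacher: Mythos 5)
Your computation is correct and lands on the stated formula, but it organises the push-forward differently from the paper. The paper realises the double-zero locus inside the incidence variety $I={\PP}(F_k^{\vee})$: it identifies $\varphi_k^*F_k$ with $\pi_{1*}(\omega_{\pi_1}^{\otimes k}(-S))$, uses the sequence $0\to\pi_{1*}(\omega^{\otimes k}(-2S))\to\pi_{1*}(\omega^{\otimes k}(-S))\to\omega_{\pi}^{k+1}\to0$ (exact up to codimension two) to express the double-zero locus inside ${\PP}(F_k(1)^{\vee})$ as a divisor of class $\tilde{h}+(k+1)\rho_1^*\omega_{\pi}$, and then pushes this through $I$ using $j_*[1]=\rho^*[\overline{\Gamma}_k]$, Proposition \ref{curveclass} and Lemma \ref{gamma}. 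Your $\mathcal{Z}$ is exactly the paper's ${\PP}(F_k(1)^{\vee})$ (the universal zero divisor), and your ramification class $((k+1)p^*\omega+q^*\check{h})|_{\mathcal{Z}}$ is the same divisor class, obtained by adjunction/Riemann--Hurwitz for the finite map $q|_{\mathcal{Z}}$ rather than from the exact sequence; your push-forward then descends directly along $q$ by flat base change, using only $\pi_*\omega^2=\kappa_1$ and $\pi_*\omega=2g-2$, which bypasses ${\PP}(\EE_k)$, the class of $\overline{\Gamma}_k$ and the transfer map $\gamma$ altogether and still reproduces the paper's intermediate identity $[Z_k]=2(2k+1)(g-1)\check{h}+k(k+1)\check{u}^*\kappa_1$. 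That is a genuine and slightly more economical variant; what the paper's route buys is that the hypotheses are visibly located: $k\geq 2$ and $(g,k)\neq(2,2)$ are used exactly to make $R^1\pi_{1*}(\omega^{\otimes k}(-2S))$ vanish outside codimension two. Two caveats on your write-up. First, you place the role of $(g,k)\neq(2,2)$ in the boundary analysis, but the obstruction there is an interior one: for $g=k=2$ the sheaf $R^1\pi_{1*}(\omega^{\otimes 2}(-2S))$ is supported on the relative Weierstrass divisor (equivalently, the squares of abelian differentials form a divisorial locus along which the generic fibre of $q|_{\mathcal{Z}}$ has two ramification points), so the identification of $(q|_{\mathcal{Z}})_*[R]$ with $[Z_2]$ fails in codimension one; your heuristic about every quadratic differential being a product of abelian differentials gestures at this but is not the precise reason. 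Second, both your argument and the paper's implicitly use that over the generic point of each component of $Z_k$ the differential has a unique, exactly double zero, and that no ramification occurs over generic boundary points; you flag these checks but do not carry them out, which is acceptable at the level of detail of the paper but should be stated as using the same codimension estimates that justify the paper's ``up to codimension two'' step.
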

For the proof of both theorems we use,  as in Section \ref{hypertangentsection},  
the incidence variety $I_k$ between $\PP(\EE_k)$ and   $\PP(\EE^{\vee} _k)$ 
which fits in the following commutative diagram:
$$
\xymatrix{
I_k  \ar[d]_{\rho}  \ar[r]^{\check{\rho}}  & \ar[d]^{\check{u}} {\PP}(\EE_k^{\vee}) \\
  \PP(\EE_k)   \ar[r]^u  &   \overline{\mathcal M}_g   }
$$
We denote by $h$ (resp.\ $\check{h}$) the first Chern class of the hyperplane line  
bundle on  ${\PP}(\EE_k) $ (resp.\  ${\PP}(\EE_k^{\vee})$), thus suppressing the dependence on $k$.
 As explained in Section \ref{hypertangentsection},   
we have  $I_k={\PP}(\check{F}_k^{\vee})$ as a bundle over  $ {\PP}(\EE_k^{\vee})$, 
with $\check{F_k}$ defined by the exact sequence on ${\PP}({\EE}_k^{\vee})$
$$
0\to \check{F_k}  \to \check{u}^* \EE_k^{\vee}  \to \mathcal{O}_{\PP(\EE_k^{\vee})}(1) 
\to 0.
$$
Then  $\rho^*h= {\mathcal O}_{\PP(\check{F_k}^{\vee})}(1)$. 
Similarly,  $I_k=\PP(F^{\vee}_k)$ as a bundle over  $\PP(\EE_k)$, 
with $F_k$ the tautological rank $r-1$ bundle on  $ \PP(\EE_k)$. Then  
$\check{\rho}^*(\check{h}) ={\mathcal O}_{\PP({F_k}^{\vee})}(1)$.

We let 
$$
\gamma=\check{\rho}_*\rho^*: {\rm CH}_{\QQ}^*({\PP}({\EE}_k))\to 
{\rm CH}_{\QQ}^*({\PP}({\EE}_k^{\vee}))
$$ be the induced map.

\begin{lemma} \label{gamma}
We have 
$\gamma(h^i)=0$ for  $i\leq r-3$, $ \gamma(h^{r-2})=1$ and  
$\gamma( h^{r-1})  = \check{h}+\check{u}^*{\rm c}_1(\EE_k)$.
\end{lemma}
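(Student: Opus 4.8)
The plan is to compute $\gamma(h^i)=\check\rho_*\bigl(\rho^*(h)^i\bigr)$ directly from the projective-bundle pushforward formula, using the description of $I_k$ as $\PP(\check F_k^\vee)$ over $\PP(\EE_k^\vee)$ recalled just above, under which $\rho^*h=c_1(\mathcal O_{\PP(\check F_k^\vee)}(1))$. Write $\xi=\rho^*h$ and let $e$ be the relative dimension of $\check\rho$; from the tautological sequence $0\to\check F_k\to\check u^*\EE_k^\vee\to\mathcal O_{\PP(\EE_k^\vee)}(1)\to 0$ the bundle $\check F_k$, hence also $\check F_k^\vee$, has rank $r-1$, so $e=r-2$. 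First I would record the standard identities for the $\PP^{e}$-bundle $\check\rho$, namely $\check\rho_*(\xi^j)=0$ for $0\le j\le e-1$, $\check\rho_*(\xi^{e})=1$, and $\check\rho_*(\xi^{e+1})=c_1(\check F_k^\vee)$; the last of these is the first Segre class of $\check F_k^\vee$, obtained by pushing forward the degree-$(e+1)$ relation $\xi^{e+1}=\check\rho^*c_1(\check F_k^\vee)\,\xi^{e}-\check\rho^*c_2(\check F_k^\vee)\,\xi^{e-1}+\cdots$ on $\PP(\check F_k^\vee)$, which expresses the vanishing of $c_{e+1}$ of the rank-$e$ tautological subbundle in the Grothendieck convention fixed in the introduction.

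Since $e=r-2$, taking $j=i$ with $i\le r-3$ gives $\gamma(h^i)=0$, while $j=r-2$ gives $\gamma(h^{r-2})=\check\rho_*(\xi^{e})=1$. For the last case, $\gamma(h^{r-1})=\check\rho_*(\xi^{e+1})=c_1(\check F_k^\vee)$, so only the evaluation of $c_1(\check F_k^\vee)$ remains: from the exact sequence defining $\check F_k$ one reads $c_1(\check F_k)=\check u^*c_1(\EE_k^\vee)-\check h=-\check u^*c_1(\EE_k)-\check h$, hence $c_1(\check F_k^\vee)=\check h+\check u^*c_1(\EE_k)$, which is the asserted value.

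The computation is routine; the only delicate point is bookkeeping of conventions, namely staying consistently in the Grothendieck convention for projectivization so that $\check\rho_*(\xi^{e+1})=+c_1(\check F_k^\vee)$ rather than its negative, and dualizing $\check F_k$ at the right stage. As a sanity check one can run the mirror computation with the roles of $\PP(\EE_k)$ and $\PP(\EE_k^\vee)$ exchanged, using $I_k=\PP(F_k^\vee)$ and $\check\rho^*\check h=c_1(\mathcal O_{\PP(F_k^\vee)}(1))$: in the case $r=3$ this yields $\rho_*\bigl(\check\rho^*(\check h^2)\bigr)=h-u^*c_1(\EE_k)$, matching Corollary~\ref{up-down} for $k=1$, $g=3$.
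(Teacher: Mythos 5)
Your proof is correct and follows essentially the same route as the paper: both identify $I_k=\PP(\check F_k^{\vee})$ over $\PP(\EE_k^{\vee})$ with $\rho^*h$ the relative hyperplane class, handle $i\leq r-2$ by the standard pushforward facts (dimension/degree), obtain $\gamma(h^{r-1})=c_1(\check F_k^{\vee})$ by pushing forward the Grothendieck--Chern relation, and evaluate $c_1(\check F_k^{\vee})=\check h+\check u^*c_1(\EE_k)$ from the tautological exact sequence. No gaps; the sign conventions you fix agree with the paper's relation (14) and with Corollary \ref{up-down}.
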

\begin{proof} For dimension reasons
 $\gamma(h^i)=0$ for  $i\leq r-3$. Moreover $ \gamma(h^{r-2})=1$ by construction. 
Applying $\check{\rho}_*$
to the Chern class relation 
$ \sum_{i=0}^{r-1}    (-1)^{r-1-i}  (\rho^*h)^i  \check{\rho}^* c_{r-1-i}(\check{F}_k^{\vee})=0$
we get  $ \gamma( h^{r-1})  =   c_1 (\check{F}_k^{\vee})$ and this equals $\check{h}+\check{u}^*{\rm c}_1(\EE_k)$ by the exact sequence.
\end{proof}

\noindent 
{\sl Proof of Theorem \ref{G-T-Thm}.} 
Let $W$ be the Weierstrass divisor on $\overline{\mathcal{C}}_g$. This is an
irreducible divisor. We denote by $\varphi_k: \overline{\mathcal C}_g \to {\PP}({\EE}_k)$ the 
morphism defined in Section \ref{classkcanonical}. 
For $k\geq 2$ we have $G_k=\rho_*\rho^*(\varphi_k(W))$ over $\mathcal{M}_g$
and $\rho_*\rho^*$ sends an irreducible divisor over $\overline{\mathcal{M}}_g$ 
to an irreducible divisor. Therefore we have
$[\overline{G}_k]=\gamma({\varphi_k}_*[W])$.
The group 
${\rm Pic}_{\QQ}(\overline{\mathcal{C}}_g)$ is generated by
$\omega_{\pi},\pi^*\lambda$, $\pi^*\delta_0$ and $\gamma_i$ (for $i=1,\ldots,g-1$) 
with $\gamma_i$ the divisor class defined by the component 
of genus $i$ lying over $\Delta_{\min(i,g-i)}$.
By Cuckierman \cite{Cuckierman} the divisor class
$[W]$ can be written as $w_1-w_2$ with 
$$
w_1=\binom{g+1}{2}\omega_{\pi} -\pi^*\lambda, \quad
w_2=\binom{g-i+1}{2} \sum_{i=1}^{g-1} \gamma_i\, .
$$
Note that $\varphi_k^*h=k\, \omega_{\pi}$. We thus get
$$
\begin{aligned}
{\varphi_k}_*w_1&= {\varphi_k}_*(\frac{g(g+1)}{2} \omega_{\pi}-\pi^*\lambda)=
{\varphi_k}_*\varphi_k^*\left( \frac{g(g+1)}{2k} h -u^*\lambda\right) \\
&=\left( \frac{g(g+1)}{2k} h-u^*\lambda\right)\, {\varphi_k}_*[1]= 
\left(\frac{g(g+1)}{2k} h-u^*\lambda\right)(\sum_{i=0}^{r-2} h^i \beta_{r-2-i}) \\
&= \sum_{i=0}^{r-1} h^iu^*(\frac{g(g+1)}{2k} \beta_{r-1-i} -\lambda\beta_{r-2-i})\, , \\
\end{aligned}
$$
where we used (1).
Lemma \ref{gamma} implies that under applying $\gamma$ 
only the terms with $h^j$ where $j=r-1$ or $r-2$ contribute 
and we get
$$
\gamma({\varphi_k}_*w_1)=\gamma(h^{r-1})\frac{g(g+1)}{2k}\beta_0 +
\gamma(h^{r-2})\check{u}^*(\frac{g(g+1)}{2k} \beta_1-\lambda\beta_0) \, .
$$
Substituting the expressions for $\beta_0$ and $\beta_1$ from Proposition \ref{curveclass}
leads to
$$
\gamma({\varphi_k}_*w_1)= g(g^2-1)\, \check{h}+ 2\, k\, (3g^2+2g+1)\check{u}^*\lambda -  
{k\over 2}\, g(g+1) \sum_{i=0}^{[g/2]} \check{u}^* \delta_i \, .
$$
For the term $\gamma({\varphi_k}_*w_2)$ we first observe 
$\gamma({\varphi_k}_*\gamma_i)=(2i-1)k\check{u}^*\delta_i$ 
because the component of genus $i$ over $\Delta_{\min(i,g-i)}$ 
has degree $(2i-1)k$ in ${\PP}^{r-1}$ and thus
maps under $\gamma$ 
to $(2i-1)k$ times the class $[1]$ of $({\PP}^{r-1})^{\vee}$ over $\Delta_{\min(i,g-i)}$. 
This gives
$$
\begin{aligned}
\gamma( \varphi_{k*} w_2) &= \frac{k}{2}  \sum _{i=1}^{g-1} (g-i)(g-i+1)(2i -1)\, 
\check{u}^*\delta_i \\
&= \frac{k}{2}  \sum _{i=1}^{[g/2] }( (g-i)(g-i+1) (2i-1) +i(i+1)(2g-2i-1))  \check{u}^*\delta_i \\
& = \frac{k}{2}  \sum _{i=1}^{[g/2] }( 2i(g-i)(g+3)-g(g+1))   \check{u}^*\delta_i . 
\end{aligned}
$$
Together this gives the correct expression for class of $[\overline{G}_k]$
as in Theorem \ref{G-T-Thm}.

When $k=1$, over ${\mathcal M}_g \cup \Delta_0$ we work as above and 
the coefficients of $\lambda $ and $\delta_0$ in the formula are as in the case  $k\geq 2$.
To find the contribution of $\delta_i$ in the formula of $\beta_1$, we work  
over the family over a base  $B$, as in Appendix \ref{app},  
 where  we have the  $(2:1)$ morphism $\varphi' : X \to \PP(\EE)$  
defined by the  $\omega_{\pi'}(-R)$.
 We follow the notation of Appendix \ref{app} and in the formulas  
we  only need to consider terms that contribute to the boundary class $\delta_i$.  
By \cite{Cuckierman} the Weierstrass divisor 
does not pass through the node of a general element 
over $\Delta_i$ and thus  $v^*W$ does not contain the 
`exceptional' divisor $R$.
We have by Cuckierman's formula $[v^*W]=w_1-w_2$ with $w_1=\binom{g+1}{2}\omega_{\pi'}-{\pi'}^*\lambda$,
and as contribution to $w_2$ over $\Delta_i$
(for $i\leq [g/2]$) we have the expression
$$
i(i+1)(2\gamma_1+\mathfrak{r})+(g-i)(g-i+1)(2\gamma_2+\mathfrak{r})\, , \eqno(16)
$$
where $\gamma_1$ (resp.\ $\gamma_2$) is the class of the component $C_1'$ of genus $i$ 
(resp.\ $C_2'$ of genus  $g-i$) over $\Delta_i\cap B$ and $\mathfrak{r}$ the class of $R$. 

If we denote by $T$ the closure of the reduced  image of $v^*W$ under the $(2:1)$ map 
$\varphi' : X' \to \PP(\EE_B)$, then $[G_1]= \gamma([T])$.
Recall that $\varphi'^*h = \omega_{\pi'}-R$. Thus the $\delta_i$-contribution in $2[T]$
coming from from $w_1$ is
$$
\frac{g(g+1)}{2} \varphi^{\prime}_*(\omega_{\pi'})=
\frac{g(g+1)}{2} \varphi^{\prime}_*({\varphi^{\prime}}^*h+\mathfrak{r})=
\frac{g(g+1)}{2}\left(h\varphi^{\prime}_*[1]+\varphi^{\prime}_*\mathfrak{r})\right)\, .
$$
If we apply (1) the contribution to $\delta_i$ in
$$
\gamma(h{\varphi'}_*[1])=\gamma(\sum_{i=0}^{g-2} h^{i+1} u^*\beta_{g-2-i}) 
= 2(g-1) (\check{h}+\check{u}^*\lambda) +\check{u}^*\beta_1 
$$ 
comes from $\check{u}^*\beta_1$ alone and equals $-4 \check{u} ^*\delta_i$,
as $\delta_i$ appears in the formula of $\beta_1$ with coefficient $-2$. 
Since $\gamma(\varphi^{\prime}_*\mathfrak{r})=2\, \check{u}^*\delta_i$
we get from $w_1$  together  the contribution $-g(g+1)\check{u}^*\delta_i$.
From $w_2$ we get by applying $\gamma$ to (16), using $\gamma(\varphi^{\prime}_*\gamma_1)=
(2i-2)\check{u}^*\delta_i$, $\gamma(\varphi^{\prime}_*\gamma_2)=
(2g-2i-2)\check{u}^*\delta_i$ and $\gamma(\varphi^{\prime}\mathfrak{r})=2\check{u}^*\delta_i$,
the contribution $2(g+3)i(i-g)+g(g+1)$. Together  $w_1-w_2$ thus contribute $-2(g+3)i(i-g)$
to the coefficient of $\delta_i$, as required.

\hfill$\square$

\smallskip

\noindent
{\sl Proof of Theorem \ref{K-S-Z-Thm}.} Here $k\geq 2$, hence we have the morphism
$\varphi_k: \overline{\mathcal C}_g  \to \PP(\EE_k)$.
Let $\pi_1: \overline{\mathcal C}_{g,1} \to  \overline{\mathcal C}_g $ 
 be the universal curve over $\overline{\mathcal C}_g$ and  
$s: \overline{\mathcal C}_g \to   \overline{\mathcal C}_{g,1}$ the tautological section 
the image of which we denote by $S$.

We claim: $\varphi_k^*F_k = \pi_{1*}(\omega^{\otimes k}_{\pi_1}( -S))$. 
Indeed, by our assumptions on $g$ and $k$, we have $R^1\pi_{1*}(\omega^{\otimes k}_{\pi_1}( -S))=0$, so
$\pi_{1*}(\omega^{\otimes k}_{\pi_1}( -S))$ is a vector bundle on $\overline{\mathcal C}_g$.
For a  point $x\in \PP(\EE_k)$ the fibre of $F_k$ is the hyperplane in the corresponding fibre of $\EE_k$ 
representing the point $x$.
When $x=\varphi_k (p)$ with $ p\in \overline{\mathcal C}_g$, then $(F_k)_x=H^0(C_p,\omega^{\otimes k}_{\pi_1}(-p))$, with $C_p$ the corresponding firbre of $\pi_1$ over $p$.  Hence the claim.
We now have on $\overline{\mathcal C}_g $ the sequence
$$
0 \to \pi_{1*}(\omega^{\otimes k}_{\pi_1}( -2S)) \to \pi_{1*}(\omega^{\otimes k}_{\pi_1}( -S)) 
\to s^* (\omega^{\otimes k}_{\pi_1} ( -S))\to 0 \, , \eqno(17)
$$
with $s^*(\omega_{\pi_1}^{\otimes k}(-S))\cong \omega_{\pi}^{k+1}$, and this sequence is
exact up to codimension $2$ because  $R^1 \pi_{1*}(\omega^{\otimes k}_{\pi_1}( -2S))$ 
vanishes in codimension $2$ in view of
the conditions on $(g,k)$.

Let $F_k(\nu)=\pi_{1*}(\omega^{\otimes k}(-\nu S))$ for $\nu=1,2$ with $F_k(1)\cong\varphi^*F_k$.  
Let  $j: \PP(F_k(1)^{\vee} ) \to \PP(F_k^{\vee})$ be the natural map.     
Then $\tilde{h}=j^*\check{\rho}^*(\check{h})$ is the class of the hyperplane line  
bundle on $\PP(F_k(1)^{\vee})$.
  The  inclusion  $F_k(2) \hookrightarrow  F_k(1) $ induces a map 
$\sigma: \PP(F_k(2)^{\vee}) \to \PP(F_k(1)^{\vee})$.
 We have the commutative diagram:
$$
\xymatrix{
\PP(F_k(2)^{\vee}) \ar[dr] \ar[r]^{\sigma} & \PP(F_k(1)^{\vee})\ar[d]_{\rho_1}  \ar[r]^{j} & I 
=\PP(F_k^{\vee}) \ar[d]_{\rho}  \ar[r]^{\check{\rho}}  & \ar[d]^{\check{u}}   \PP(\EE_k^{\vee}) \\
& \overline{\mathcal C}_g  \ar[r]^{\varphi_k} & \PP(\EE_k)   \ar[r]^u  &   \overline{\mathcal{M}}_g    }
$$
Let $\alpha= j \sigma :  \PP(F_k(2)^{\vee}) \to I$ and      
$A= {\rm Im}(\alpha)$ the image of this map. Then $[Z_k] =\check{\rho}_*[A]$.
By the  exact sequence (17)  we have
$ \sigma _*[\PP(F_k(2)^{\vee})] =\tilde{h} + (k+1)  \rho_1^* \omega_{\pi} $.
We observe that $j_*[1]=\rho^*[\overline{\Gamma}_k]$ and  $k\, \omega_{\pi}= \varphi_k^* h$ and we find
$$
\begin{aligned}
k{[A]} &= k\, j_*(j^*\check{\rho}^*\check{h})  + (k+1) j_*(\rho_1^*\phi_k^* h)  = 
k\, \check{\rho}^*\check{h} \, j_*[1]  + (k+1) \rho^*(h) j_*[1]\\
&= k\, \check{\rho}^*\check{h}\, \rho^*[\overline{\Gamma}_k] +  (k+1) \rho^*(h[\overline{\Gamma}_k])\, .\\
\end{aligned}
$$
By Proposition \ref{curveclass} and Lemma \ref{gamma} we have
$\check{h} \gamma([\overline{\Gamma}_k])= 2k(g-1) \check{h}$ and
$$
\gamma (h[\overline{\Gamma}_k])= \gamma(\sum_{i=0}^{r-2} h^{i+1} u^*\beta_{r-2-i}) 
= 2k(g-1) ( \check{h}+\check{u}^*{\rm c}_1(\EE_k)) +\check{u}^*\beta_1
=  2k(g-1)  \check{h}+k^2\check{u}^*\kappa_1
$$
and thus
$
[Z_k]= 2(2k+1)(g-1)\check{h}  +k(k+1)\check{u}^* \kappa_1
$
in agreement with the formula of Theorem \ref{K-S-Z-Thm}.
\hfill$\square$
\end{section}

\end{document}